\documentclass[11pt]{amsart}
\usepackage{amsmath,amscd,amssymb,amsfonts}
\usepackage{mathrsfs}
\usepackage{hyperref}
\numberwithin{equation}{section}       
\numberwithin{figure}{section}       

\theoremstyle{plain}

\newtheorem{prop}{Proposition}[section]
\newtheorem{coro}[prop]{Corollary}
\newtheorem{lemm}[prop]{Lemma}

\newtheorem{theoalph}{Theorem}

\newtheorem{coroalph}[theoalph]{Corollary}

\newtheorem{conj}[prop]{Conjecture}

\theoremstyle{definition}
\newtheorem{defi}[prop]{Definition}

\theoremstyle{remark}

\newtheoremstyle{citing}
  {3pt}
  {3pt}
  {\itshape}
  {}
  {\bfseries}
  {.}
  {.5em}
  {\thmnote{#3}}

\theoremstyle{citing}
\newtheorem*{generic}{}

\newcommand{\partn}[1]{{\smallskip \noindent \textbf{#1.}}}


\DeclareMathAlphabet{\mathpzc}{OT1}{pzc}{m}{it} 

%
%
\newcommand{\orb}{\textit{O}}

\newcommand{\cO}{\mathcal{O}}

%
%

\newcommand{\hDelta}{\widehat{\Delta}}

\newcommand{\tDelta}{\widetilde{\Delta}}

\newcommand{\teta}{\widetilde{\teta}}


%
%


\renewcommand{\=}{ : = }





%
%

\newcommand{\Ok}{{\cO_k}}

\newcommand{\mk}{\mathfrak{m}_k}
\newcommand{\wtk}{\widetilde{k}}
\newcommand{\wtf}{\widetilde{f}}
\newcommand{\whf}{\widehat{f}}
\newcommand{\whi}{\widehat{i}}
\newcommand{\whg}{\widehat{g}}\newcommand{\whh}{\widehat{h}}

\DeclareMathOperator{\wideg}{wideg}
\DeclareMathOperator{\ord}{ord}

\DeclareMathOperator{\resit}{r\acute{e}sit}

%
%

\begin{document}

\title[Generic parabolic points are isolated]{Generic parabolic points are isolated in positive characteristic}

\author[Karl-Olof Lindahl]{Karl-Olof Lindahl$^\dag$}
\thanks{$^\dag$ This research was supported by the by the Royal Swedish Academy of Sciences
and the Swedish Research and Education in Mathematics Foundation SVeFUM}
\address{Karl-Olof Lindahl, Department of Mathematics, Linnaeus University, 351 95, V\"{a}xj\"{o}, Sweden}
\email{karl-olof.lindahl@lnu.se}

\author[Juan Rivera-Letelier]{Juan Rivera-Letelier$^\ddag$}
\thanks{$^\ddag$ This research was partially supported by FONDECYT grant 1141091, Chile}
\address{Juan Rivera-Letelier, Facultad de Matem{\'a}ticas, Pontificia Universidad Cat{\'o}lica de Chile, Avenida Vicu{\~n}a Mackenna~4860, Santiago, Chile}
\email{riveraletelier@mat.puc.cl}
\urladdr{\url{http://rivera-letelier.org/}}

\begin{abstract}
We study analytic germs in one variable having a parabolic fixed point at the
origin, over an ultrametric ground field of positive characteristic.
It is conjectured that for such a germ the origin is isolated as a periodic point.
Our main result is an affirmative solution of this conjecture in the case of a generic
germ with a prescribed multiplier.
The genericity condition is explicit: That the power
series is minimally ramified, \emph{i.e.}, that the degree of the first non-linear term of
each of its iterates is as small as possible.
Our main technical result is a computation of the first significant
terms of a minimally ramified power series.
From this we obtain a lower bound for the norm of nonzero periodic points,
from which we deduce our main result.
As a by-product we give a new and  self-contained proof of a characterization of minimally ramified power series in terms of the iterative residue.
\end{abstract}


\maketitle

%
%

\section{Introduction}
\label{s:introduction}
In this article, we are interested in the dynamics near a parabolic cycle of
analytic maps in positive characteristic.
Recall that a periodic point~$\zeta_0$ of minimal period~$n$ of an
analytic map~$f$ in one variable is \emph{rationally indifferent} or
\emph{parabolic} if~$(f^n)'(\zeta_0)$ is a root of unity, and
it is \emph{irrationally indifferent} if~$(f^n)'(\zeta_0)$ is not a root of unity, but~$|(f^n)'(\zeta_0)| = 1$.

In the complex setting, Yoccoz showed that for an irrationally indifferent cycle of a
quadratic map there is the following dichotomy: Either the map is
locally linearizable near the cycle and then each point in this cycle is
isolated as a periodic point, or every neighborhood of the cycle
contains a cycle of strictly larger minimal period, see~\cite{Yoc95c}.
In contrast, for an ultrametric ground field of characteristic zero only the
first alternative occurs: Every irrationally indifferent cycle is
locally linearizable~\cite{HerYoc83}, and hence every point
in the cycle is isolated as a periodic point.
The case of an ultrametric field of positive characteristic is more
subtle, since irrationally indifferent cycles are usually
not locally linearizable, see for example~\cite[Theorem~$2.3$]{Lin04}
or~\cite[Theorem~$1.1$]{Lin10}.
Nevertheless, every irrationally indifferent
periodic point is isolated as a periodic point~\cite[Corollary~$1.1$]{LinRiv16}.

In this paper we focus on parabolic cycles.
An analytic map is never locally linearizable near a parabolic cycle, except in the trivial case
where an iterate of the map is the identity near the cycle.
Thus, the question that remains is whether (non-trivial) parabolic periodic points are
isolated.
For an ultrametric ground field of characteristic zero the answer is affirmative: The positive residue characteristic case follows
from the fact that periodic points are the zeros of the iterative
logarithm, see~\cite[Proposition~$3.6$]{Rivthese} and
also~\cite{Lub94} for the case where the ground field is discretely
valued; The zero residue characteristic case follows from elementary facts, see for example~\cite[Lemma~$2.1$]{LinRiv16}.

Thus, it only remains the case of parabolic cycles in positive characteristic.
In~\cite{LinRiv16} we proposed the following conjecture.

\begin{conj}[\cite{LinRiv16}, Conjecture~$1.2$]
In positive characteristic, every parabolic periodic point is either isolated as a periodic point, or has a neighborhood on which an iterate of the map is the identity.
\end{conj}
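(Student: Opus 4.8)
The plan is to reduce the conjecture to a uniform bound on the slopes of the Newton polygons of the power series $f^{p^{k}}(z)-z$, and to reduce that bound, in turn, to an arithmetic statement about the Taylor coefficients of the iterates $f^{p^{k}}$.

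\emph{Normalisation and reduction to iterates of $p$-power order.}
Replacing $f$ by an iterate $f^{N}$ affects neither conclusion of the conjecture: near the origin $f$ and $f^{N}$ have the same periodic points (orbits remain in a small invariant disk), and an iterate of $f$ is the identity near the origin if and only if an iterate of $f^{N}$ is. Choosing $N$ so that the given periodic point becomes a fixed point at the origin and --- since in characteristic $p$ every root of unity has order prime to $p$ --- so that the multiplier equals $1$, and discarding the case where the resulting germ is the identity, we may assume $f(z)=z+O(z^{2})$, $f\neq\mathrm{id}$. After rescaling by a suitable element of a finite extension of $K$, we may further assume $f\in\OK[[z]]$ with nontrivial reduction $\wtf\in\Kres[[z]]$ (necessarily tangent to the identity); write $q\geq 2$ for the order of contact of $\wtf$ with the identity and $q_{j}$ for that of $f^{p^{j}}$. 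The sequence $(q_{j})$ is strictly increasing, so the coefficients of $f^{n}$ in degrees below $q_{j}$ depend only on $n\bmod p^{j}$; hence $n\mapsto f^{n}$ extends to a continuous homomorphism $\Z_{p}\to\OK[[z]]$. For a periodic point $\zeta$, the set of $t\in\Z_{p}$ with $f^{t}(\zeta)=\zeta$ is a closed subgroup of $\Z_{p}$ containing a nonzero integer, hence equals $p^{k}\Z_{p}$ for some $k\geq 0$, and in particular $f^{p^{k}}(\zeta)=\zeta$. It therefore suffices to produce $\rho>0$, \emph{independent of $k$}, such that $f^{p^{k}}(z)-z$ has no zero with $0<|z|<\rho$.

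\emph{Reduction to a Newton polygon estimate.}
Fix $k$ and write $f^{p^{k}}(z)-z=\sum_{j}b_{j}^{(k)}z^{j}\in\OK[[z]]$. This series reduces modulo $\mK$ to $\ov c_{k}\,z^{q_{k}}(1+O(z))$ with $\ov c_{k}\neq 0$, so $b_{j}^{(k)}\in\mK$ for $j<q_{k}$ while $b_{q_{k}}^{(k)}\in\OK^{\times}$; thus all nonzero zeros with $|z|<1$ occur at abscissae at most $q_{k}$ of the Newton polygon, and their valuations equal the absolute values of the negative slopes. The constant $\rho$ exists precisely when those slopes are bounded, uniformly in $k$ --- equivalently, when the valuations $v(b_{j}^{(k)})$ (with $v$ the valuation of $K$), and hence the valuations of the Taylor coefficients of the iterates $f^{p^{k}}$, are suitably controlled: bounded above at degrees close to $q_{k}$ and, in the intermediate range, bounded below by the chord running down to $(q_{k},0)$. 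The strict growth of $(q_{j})$, giving $q_{k}\geq q+k$, already rules out the crudest obstruction, so what remains is a genuine estimate on the coefficients.

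\emph{The coefficient valuations and the iterative residue.}
The coefficient of $z^{j}$ in $f^{n}$ is a polynomial in $n$ with values in $\OK$ at the integers, hence of the form $\sum_{r\geq 1}d_{j,r}\binom{n}{r}$ with $d_{j,r}\in\OK$ vanishing beyond its degree. Therefore $b_{j}^{(k)}=\sum_{r\geq 1}d_{j,r}\binom{p^{k}}{r}$, and, by Kummer's theorem, $v_{p}\!\binom{p^{k}}{r}=k-v_{p}(r)$ for $1\leq r<p^{k}$, where $v_{p}$ is the $p$-adic valuation on the integers; so $v(b_{j}^{(k)})$ is determined by which of the structure constants $d_{j,r}$ are units, weighted by these binomial valuations, while the possible degrees $q_{k}$ are constrained by Sen's theorem on ramification filtrations. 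The iterative residue $\resit(f)$ governs the leading such interaction between the significant terms of $f^{p^{k}}$ and those of $f^{p^{k+1}}=(f^{p^{k}})^{p}$. When $f$ is minimally ramified --- the hypothesis under which this paper's main theorems are established --- one has $q_{k}-1=p^{k}(q-1)$, the computation of the first significant terms carried out here pins down the structure constants relevant to a possible Newton dip, and the characterisation of minimal ramification via $\resit(f)$ closes the argument.

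\emph{The main obstacle.}
The crux is the wildly ramified case. When a ramification number jumps by more than the minimal amount, the term of degree $q_{k+1}$ in $(f^{p^{k}})^{p}(z)-z$ arises through an elaborate cancellation not governed by a single residue-type invariant, and the intermediate structure constants $d_{j,r}$ are no longer controlled by $\resit(f)$ alone. Ruling out that such a germ produces a coefficient $b_{j}^{(k)}$ whose valuation steepens the Newton polygon --- equivalently, ruling out arbitrarily small nonzero periodic points --- would seem to require a whole hierarchy of higher invariants attached to the successive ramification jumps, together with a finer filtration of the group of germs tangent to the identity by ``ramification type'', and perhaps a semicontinuity argument reducing the general estimate to a neighbourhood of the minimally ramified locus. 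This is the heart of the conjecture, and is precisely what the minimal ramification hypothesis allows one to bypass.
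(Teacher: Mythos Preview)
The statement you are addressing is a conjecture; the paper does not prove it, and your proposal is (as you acknowledge at the end) an outline that stops short of the wildly ramified case. The initial reductions---to a fixed point at the origin with multiplier~$1$, to coefficients in~$\Ok$, to periods of the form~$p^k$ via the pro-$p$ structure of the Nottingham group, and then to a uniform bound on the slopes of the Newton polygon of~$f^{p^k}(\zeta)-\zeta$---are all correct and parallel the paper's Lemma~\ref{l:periodic bound}. Your identification of the obstacle in the non--minimally-ramified case is also accurate.

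There is, however, a genuine error in the mechanism you propose for controlling the coefficient valuations. You write $b_j^{(k)}=\sum_{r\geq 1} d_{j,r}\binom{p^k}{r}$ with $d_{j,r}\in\Ok$ and then invoke Kummer's theorem for $v_p\binom{p^k}{r}$. This is a mixed-characteristic device: it is useful when~$K$ has characteristic~$0$ and residue characteristic~$p$, because then $\binom{p^k}{r}$ is a nonzero element of~$\Ok$ whose valuation is governed by~$v_p$. In the setting of the conjecture, $K$ has characteristic~$p$, so $\binom{p^k}{r}$ is the image in~$\F_p\subset\Ok$ of the integer binomial coefficient, and by Lucas' theorem this image is~$0$ for every $0<r<p^k$. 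Thus your sum collapses to the single term $d_{j,p^k}$ (present only when $p^k<j$), and there is no ``weighting by binomial valuations'' to exploit: the quantity $v_p\binom{p^k}{r}=k-v_p(r)$ simply does not appear in $v(b_j^{(k)})$. What you would need instead is direct control of $v(d_{j,p^k})$ for the relevant~$j$, which is exactly the computation of the first significant terms of $f^{p^k}$ that the paper carries out (Main Lemma) only under the minimal-ramification hypothesis. So the proposed route through Kummer does not give any traction beyond what the paper already has, and the conjecture remains open for precisely the reason you name in your final paragraph.
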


Our main result is a solution of this conjecture for generic parabolic
periodic points.

\begin{generic}[Main Theorem]
In positive characteristic, every generic parabolic periodic point is
isolated as a periodic point.
\end{generic}

The proof of the Main Theorem relies on the connection between the geometric location of
periodic points of power series with integer coefficients, and the
lower ramification numbers of wildly ramified field automorphisms that was established in~\cite{LinRiv16}.
Lower ramification numbers of wildly ramified field automorphisms have
previously been studied by Sen~\cite{Sen69}, Keating~\cite{Kea92},
Laubie and Sa{\"{\i}}ne~\cite{LauSai98}, Wintenberger~\cite{Win04},
among others.

We now proceed to describe the content and proof of the Main Theorem in more precise terms.
Replacing the map by an iterate if necessary, we restrict to the case
of fixed points.
Moreover, after conjugating by a translation we assume the fixed point is
the origin.
So, from now on we restrict to study power series~$f(\zeta)$ such
that~$f(0) = 0$ and such that~$f'(0)$ is a root of unity.
We call such a power series \emph{parabolic}.
Finally, after a scale change we can restrict to the case of power
series with integer coefficients.
In the case where the ground field is algebraically closed, this last
condition is equivalent to the condition that the power series is
convergent and univalent on the open unit disk, see for example~\cite[\S$1.3$]{Rivthese}.
So these normalizations are analogous to those used in the complex
setting by Yoccoz in~\cite{Yoc95c}.

The genericity condition in the Main Theorem is described explicitly in terms of
the lower ramification numbers of the map: It requires that the lower
ramification numbers are as small as possible.
Such power series are called \emph{minimally ramified},
see~\S\ref{ss:minimally ramified are generic} for precisions.
So the Main Theorem is a direct consequence of~$2$ independent facts:
\begin{enumerate}
\item[A.]
Minimally ramified parabolic power series are generic
(Theorem~\ref{t:genericity} in~\S\ref{ss:minimally ramified are generic});
\item[B.]
For a parabolic power series that is minimally ramified, the origin is
isolated as a periodic point (Corollary~\ref{c:isolated} in~\S\ref{ss:periodic of minimally ramified}).
\end{enumerate}
Minimally ramified power series appear naturally when studying ``optimal cycles'' of irrationally
indifferent periodic points, see~\cite{LinRiv16}.
They were first introduced in a more restricted setting
by Laubie, Movahhedi, and Salinier in~\cite{LauMovSal02}, in
connection with Lubin's conjecture in~\cite{Lub94}.

Our main technical result is a calculation of the first significant
terms of the iterates of power series in a certain normal form (Main Lemma in~\S\ref{s:a reduction}).
This allows us to give a new and self-contained proof of the
characterization of minimally ramified power series given
in~\cite{LinRiv16} (Theorem~\ref{t:characterization
  of minimally ramified} in~\S\ref{ss:organization}).

We proceed to describe our main results in more detail.
Throughout the rest of the introduction we fix a prime number~$p$ and
a field~$k$ of characteristic~$p$.

\subsection{Minimally ramified power series are generic}
\label{ss:minimally ramified are generic}
Denote by~$\ord(\cdot)$ the valuation on~$k[[\zeta]]$ defined for a nonzero power series as the 
lowest degree of its nonzero terms, and for the zero power series~$0$ by~$\ord(0) = + \infty$.

\begin{defi}
\label{d:ramification number}
For a power series~$g(\zeta)$ in~$k[[\zeta]]$
satisfying~$g(0) = 0$ and~$g'(0) = 1$, define for each integer~$n \ge 0$
$$ i_n(g) \= \ord \left( \frac{g^{p^n}(z) - z}{z} \right). $$
\end{defi}

If~$g(\zeta)$ is as in the definition and~$n \ge 1$ is such that~$i_n(g)$
is finite, then~$i_0(g)$, \ldots, $i_{n - 1}(g)$ are all finite and we
have
\begin{displaymath}
  i_0(g) < i_1(g) < \cdots < i_n(g),
\end{displaymath}
see for example~\cite[Lemma~$3.6$]{LinRiv16}.

Let~$f(\zeta)$ be a parabolic power series in~$k[[\zeta]]$ and denote by~$q$
the order of~$f'(0)$, so that~$f^q(0) = 0$ and~$(f^q)'(0) = 1$.
Then for every integer~$n \ge 0$ we have
\begin{equation}
  \label{e:least ramification}
  i_n(f^q) \ge q \frac{p^{n + 1} - 1}{p - 1},
\end{equation}
see~\cite[Proposition~$3.2$]{LinRiv16}.
This motivates the following definition.

\begin{defi}
Let~$f(\zeta)$ be a parabolic power series in~$k[[\zeta]]$ and denote
by~$q$ the order of~$f'(0)$.
Then~$f$ is \emph{minimally ramified} if equality holds
in~\eqref{e:least ramification} for every~$n \ge 0$.
\end{defi}

Roughly speaking, the following result asserts that among parabolic power
series with a prescribed multiplier, those that are minimally ramified are generic.

\begin{theoalph}
\label{t:genericity}
Let~$p$ be a prime number, $k$ a field of characteristic~$p$, and~$F$ the prime field of~$k$.
Fix a root of unity~$\gamma$ in~$k$ and denote by~$q$ its order.
Then there is a nonzero polynomial~$M_q(x_1,\ldots,x_{2q})$ with
coefficients in~$F(\gamma)$, such that a power series~$f(\zeta)$ in~$k[[\zeta]]$ of the form
\begin{equation}
  \label{e:typical series}
f(\zeta)
=
\gamma \zeta \left( 1 + \sum_{i = 1}^{+ \infty} c_i \zeta^i \right)  
\end{equation}
is minimally ramified if and only if~$M_q(c_1, \ldots, c_{2q}) \neq 0$.
\end{theoalph}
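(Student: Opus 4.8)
The plan is to reduce the minimal‑ramification condition for $f$ to a condition on $g = f^q$, and then to express the first few "significant" coefficients of $g$ — those controlling $i_0(g),\ldots,i_{\text{enough}}(g)$ — as explicit polynomials in $c_1,\ldots,c_{2q}$ with coefficients in $F(\gamma)$. First I would recall that $f$ is parabolic of multiplier $\gamma$ of order $q$, so $g=f^q$ satisfies $g(0)=0$, $g'(0)=1$, and the bound \eqref{e:least ramification} reads $i_n(g)\ge q\frac{p^{n+1}-1}{p-1}$, with minimal ramification meaning equality for all $n\ge 0$. The classical fact (used also in \cite{LinRiv16}) is that it suffices to have equality for $n=0$ alone: if $i_0(g)$ attains its minimal value $q\cdot\frac{p-1}{p-1}\cdot\frac{p^{1}-1}{p-1}$… more precisely $i_0(g)=q$ forces, via the theory of the iterative residue / the Sen–Keating type estimates quoted in the excerpt, equality at every level. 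So the whole statement collapses to: $f$ is minimally ramified $\iff i_0(f^q)=q$, i.e.\ the lowest‑degree nonlinear term of $f^q(z)-z$ has degree exactly $q+1$ (equivalently the coefficient of $z^{q+1}$ in $f^q(z)-z$ is nonzero), since a priori $\ord(f^q(z)-z)\ge q+1$.

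Next I would carry out the computation of that coefficient. Write $f(\zeta)=\gamma\zeta\bigl(1+\sum_{i\ge1}c_i\zeta^i\bigr)$ and compute $f^q$ by composing $q$ times. Because $\gamma^q=1$, the linear part of $f^q$ is the identity, and one checks inductively that the first nonzero nonlinear term of $f^{j}(\zeta)-\gamma^{j}\zeta$ has degree $\ge 2$, with the degree‑$(m+1)$ coefficient of $f^q(z)-z$ being a universal polynomial in $c_1,\ldots,c_m$ over the prime field adjoined $\gamma$ — this is exactly the kind of "first significant terms" bookkeeping the paper isolates as its Main Lemma. The point is that the coefficient of $z^{q+1}$ in $f^q(z)-z$ is a specific polynomial $N_q(c_1,\ldots,c_q)\in F(\gamma)[c_1,\ldots,c_q]$, and one must show $N_q$ is \emph{not identically zero}. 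For that I would exhibit a single explicit parabolic series of multiplier $\gamma$ that is minimally ramified — for instance a suitable monomial‑type perturbation, or invoke the existence of minimally ramified series with prescribed multiplier (which follows from the normal‑form/Main‑Lemma machinery, or can be seen directly for $g(\zeta)=\zeta/(1-\zeta^q)^{1/q}$‑type formal models lifted appropriately). Nonvanishing of $N_q$ at one point over the algebraic closure gives $N_q\ne 0$ as a polynomial.

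Then I would handle the gap between "$N_q\ne0$" and the claimed polynomial $M_q$ in $2q$ variables $x_1,\ldots,x_{2q}$. Even though $i_0(f^q)=q$ is detected by $c_1,\ldots,c_q$ alone, the issue is the degenerate locus: when $N_q(c_1,\ldots,c_q)=0$ one has $i_0(f^q)>q$, and one must then rule out that $i_0(f^q)$ still lands on its next admissible value in a way consistent with the chain $i_0<i_1<\cdots$ forced by minimal ramification — i.e.\ one must show that $\{N_q=0\}$ forces \emph{non}‑minimal ramification, not just a shift. Here the uniqueness of the minimal chain does the job: on $\{N_q=0\}$, $i_0(f^q)\ge q+1$, but $q+1$ is not of the form $q\frac{p-1}{p-1}$ unless $q=0$, and any value strictly larger than the minimum propagates to strictly larger‑than‑minimal $i_n$ for all $n$ (by the monotonicity/spacing results cited from \cite{LinRiv16}); hence minimal ramification is equivalent to $N_q(c_1,\ldots,c_q)\ne0$. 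Finally I set $M_q(x_1,\ldots,x_{2q}):=N_q(x_1,\ldots,x_q)$ (padding with the extra $q$ variables, which are present only because the \emph{characterization} theorem, Theorem~\ref{t:characterization of minimally ramified}, and the iterative‑residue formula naturally involve $2q$ coefficients), giving a nonzero polynomial over $F(\gamma)$ with the asserted property.

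The main obstacle I expect is the second step: obtaining a clean closed form for $N_q$, or at least a clean proof of its nonvanishing, uniformly in $p$ and $q$. Direct composition of $q$ power series produces combinatorially heavy expressions, so the real work — which the paper apparently packages as its Main Lemma via a normal‑form reduction in \S\ref{s:a reduction} — is to organize the iteration so that only the "significant" coefficients survive, and then to pin down the leading term explicitly enough to see it is not the zero polynomial (equivalently, to produce one minimally ramified witness with the given multiplier). The reduction of minimal ramification at all levels to the single condition $i_0(f^q)=q$ is standard, and the final padding to $2q$ variables is cosmetic; everything hard is concentrated in the coefficient computation.
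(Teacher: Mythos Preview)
Your argument hinges on the assertion that ``it suffices to have equality for $n=0$ alone,'' i.e.\ that $i_0(f^q)=q$ already implies $i_n(f^q)=q\frac{p^{n+1}-1}{p-1}$ for all $n\ge 1$. This is false, and it is precisely the content of the paper's Theorem~\ref{t:characterization of minimally ramified} (and of Corollary~\ref{c:minimally ramified q series}) that an \emph{additional} condition---the nonvanishing of the iterative residue, and when $p=2$ also $\resit(f)\neq 1$---is required. Concretely, for $p$ odd and $q=1$, the series $f(\zeta)=\zeta+a\zeta^2+a^2\zeta^3$ with $a\neq 0$ has $i_0(f)=1$, yet by the Main Lemma $\chi_{1,n}=a^{p^n-\frac{p^n-1}{p-1}}\bigl(a^2-a^2\bigr)^{\frac{p^n-1}{p-1}}=0$ for every $n\ge 1$, so $i_1(f)>p+1$ and $f$ is not minimally ramified. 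Your claim that the reduction to $n=0$ is ``standard'' is simply not supported by the references you cite; neither Sen's nor Keating's results give this.

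Because of this, your proposed polynomial $M_q(x_1,\ldots,x_{2q}):=N_q(x_1,\ldots,x_q)$ is wrong: its nonvanishing detects only $i_0(f^q)=q$, not minimal ramification. The $2q$ variables are not ``cosmetic padding''; the actual $M_q$ constructed in the paper is, for $p$ odd,
\[
M_q=\beta_1\cdot\Bigl(\tfrac{q+1}{2}\beta_1^2-\beta_2\Bigr),
\]
where $\beta_1\in F(\gamma)[x_1,\ldots,x_q]$ and $\beta_2\in F(\gamma)[x_1,\ldots,x_{2q}]$ are the first two reduced-form coefficients produced by Lemma~\ref{l:basic normal form reloaded}, and the second factor genuinely depends on $x_{q+1},\ldots,x_{2q}$. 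The hard step you correctly anticipate---computing the ``first significant terms'' under iteration---must be carried one level further than you propose, tracking not just the coefficient of $\zeta^{q+1}$ in $f^q$ but also that of $\zeta^{2q+1}$, which is exactly what the Main Lemma does.
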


\subsection{Periodic points of minimally ramified parabolic power series}
\label{ss:periodic of minimally ramified}
Now we turn to periodic points of parabolic power series that are
minimally ramified.
For such a power series that has integer coefficients, we estimate from below the norm of nonzero periodic points.
The estimate is in terms of some invariants that we proceed
to describe.

In this section we fix a parabolic power series~$f(\zeta)$
in~$k[[\zeta]]$ and denote by~$q$ the order of~$\gamma \= f'(0)$.
We assume~$i_0(f^q) = q$, which is weaker than~$f$ being minimally ramified.

The first invariant is the coefficient~$\delta_0(f^q)$ of~$\zeta^{q + 1}$
in~$f^q$ so that $\delta_0(f^q)\neq 0$ and that
$$ f^q(\zeta)
=
\zeta \left( 1 + \delta_0(f^q) \zeta^q + \cdots \right). $$
The coefficient~$\delta_0(f^q)$ is invariant under conjugacy by power series
that are tangent to the identity at~$\zeta =
0$ (Lemma~\ref{l:quasi-invariants}).

The second invariant is the ``iterative residue'' introduced
in~\cite[\S$4$]{LinRiv16}.
It is a positive characteristic analog of the invariant introduced
in the complex setting by {\'E}calle in~\cite{Eca75}.
To define it, suppose first~$\gamma = 1$, and let~$a_1$ and~$a_2$
in~$k$ be
such that~$f(\zeta)$ is of the form
$$ f(\zeta)
=
\zeta \left( 1 + a_1 \zeta + a_2 \zeta^2 + \cdots \right). $$
Our hypothesis~$i_0(f) = 1$ implies~$a_1 \neq
0$.
The \emph{iterative residue} $\resit(f)$ of~$f$ is defined by
$$ \resit(f) \= 1 - \frac{a_2}{a_1^2}. $$
Suppose now that~$\gamma \neq 1$, so~$q \ge 2$.
Then~$f$ is conjugated to a power series of the form
\begin{equation}
  \label{e:reduced form}
g(\zeta)
=
\gamma \zeta \left( 1 + \sum_{j = 1}^{+ \infty} a_j \zeta^{jq} \right),
\end{equation}
see Lemma~\ref{l:basic normal form reloaded} or~\cite[Proposition~$4.1$]{LinRiv16}.
Our hypothesis~$i_0(f^q) = q$ implies~$a_1 \neq 0$
(Lemma~\ref{l:quasi-invariants} and Corollary~\ref{c:minimally ramified q series} or~\cite[Proposition~$4.1$]{LinRiv16}).
The \emph{iterative residue} $\resit(f)$ of~$f$ is defined by
$$ \resit(f) \= \frac{q + 1}{2} - \frac{a_2}{a_1^2};
\footnote{Note that in the case~$p = 2$ the integer~$q$ is odd,
  so~$\frac{q + 1}{2}$ defines an element of~$k$.} $$
It only depends on~$f$ and not on~$g$.
In all the cases~$\resit(f)$ is a conjugacy invariant of~$f$, see~\cite[Lemma~$4.3$]{LinRiv16}.

Suppose~$k$ is endowed with a norm~$|\cdot|$ and denote by~$\Ok$
the ring of integers of~$(k, |\cdot|)$ and by~$\mk$ its maximal ideal.
Then the minimal period of each periodic point of~$f$ in~$\mk \setminus \{ 0 \}$ is of the form~$qp^n$, for some integer~$n \ge 0$, 
see for example~\cite[Lemma~$2.1$]{LinRiv16}.

\begin{theoalph}
\label{t:isolated}
Let~$p$ be a prime number and~$(k, |\cdot|)$ an ultrametric field of characteristic~$p$.
Let~$f(\zeta)$ be a parabolic power series in~$\Ok[[\zeta]]$ and denote
by~$q$ the order of~$f'(0)$.
Then for every integer~$n\geq 1$ and every periodic point~$\zeta_0$ of~$f$ of minimal period~$qp^n$, we have
\begin{equation*}
|\zeta_0|
\ge
\begin{cases}
\left| \delta_0(f^q) \right|^{\frac{1}{q}}
\cdot \left|\resit(f) \right|^{\frac{1}{qp}}
& \text{if~$p$ is odd}
\\ &
\text{or if~$p = 2$ and~$n = 1$;}
\\
\left| \delta_0(f^q) \right|^{\frac{1}{q}}
\cdot \left| \resit(f) \left( 1 - \resit(f) \right) \right|^{\frac{1}{4q}}
& \text{if~$p = 2$ and~$n \ge 2$.}
\end{cases}
\end{equation*}
\end{theoalph}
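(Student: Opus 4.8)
The plan is to reduce, via conjugacy, to the normal form~\eqref{e:reduced form} and then run an explicit two-step analysis of the iterates of such a series, controlling the Newton polygon of $f^{qp^n}(\zeta)-\zeta$. First I would record that $\delta_0(f^q)$ and $\resit(f)$ are conjugacy invariants (Lemma~\ref{l:quasi-invariants}), and that a periodic point of minimal period $qp^n$ of $f$ is a periodic point of minimal period $p^n$ of $g\=f^q$; so it suffices to estimate $|\zeta_0|$ for $\zeta_0$ a root of $\tfrac{g^{p^n}(\zeta)-\zeta}{\zeta}$ that is not a root of $\tfrac{g^{p^{n-1}}(\zeta)-\zeta}{\zeta}$. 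Here $g$ has the form $g(\zeta)=\gamma\zeta\bigl(1+\sum_j a_j\zeta^{jq}\bigr)$ with $a_1\neq 0$, and $\delta_0(f^q)$ is a unit multiple of $a_1$ while $\resit(f)$ is the normalized combination $\tfrac{q+1}{2}-a_2/a_1^2$ (or $1-a_2/a_1^2$ when $q=1$). The point is that all periodic points of exact period $p^n$ are roots of a single factor $\Phi_n(\zeta)$ of $\tfrac{g^{p^n}(\zeta)-\zeta}{\zeta}$, and a lower bound on $|\zeta_0|$ follows from an upper bound on the absolute value of the lowest-degree nonzero coefficient of $\Phi_n$ together with the known degree $i_n(g)-i_{n-1}(g) = qp^n$ coming from $i_0(g)=q$ and the hypotheses of the theorem (via~\cite[Lemma~$3.6$]{LinRiv16} and the recursion for the $i_n$).

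The heart of the argument is the Main Lemma of~\S\ref{s:a reduction}: it gives the first significant terms of $g^{p^n}$ for $g$ in the normal form~\eqref{e:reduced form}. Concretely, one expects
\begin{equation*}
g^{p^n}(\zeta)
=
\zeta\Bigl(1 + A_n\,\zeta^{i_n(g)} + B_n\,\zeta^{i_n(g)+q} + \cdots\Bigr),
\end{equation*}
with $A_n,B_n$ explicit polynomials in $a_1,a_2$ (and $\gamma$), and crucially with $A_n$ a nonzero unit multiple of $a_1^{?}$ and the ratio $B_n/A_n$ (after the appropriate normalization by $A_{n-1},B_{n-1}$) reproducing $\resit(g)$ up to a unit. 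Granting this, write $g^{p^n}(\zeta)-\zeta = \zeta^{i_{n-1}(g)+1}\bigl(A_{n-1}+B_{n-1}\zeta^q+\cdots\bigr)\cdot(\text{stuff})$; dividing out the factor vanishing on period-$p^{n-1}$ points isolates $\Phi_n(\zeta)=c_n\zeta^{qp^n}+(\text{lower order in }\zeta)+\cdots$ whose lowest coefficient $c_n$ is, up to a unit, a monomial in $\delta_0(f^q)$ and $\resit(f)$ — precisely of absolute value $|\delta_0(f^q)|\cdot|\resit(f)|^{1/p}$ in the odd case, and of absolute value $|\delta_0(f^q)|\cdot|\resit(f)(1-\resit(f))|^{1/4}$ in the $p=2$, $n\ge 2$ case (the discrepancy being the classical phenomenon that for $p=2$ the second iterate $g^2$ already has $\resit(g^2)=\resit(g)(1-\resit(g))$-type behavior, while $g$ and $g^2$ must be treated separately — whence the case split and the special role of $n=1$). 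Since $\Phi_n$ has unit leading-ish coefficient beyond degree $qp^n$ and $\zeta_0$ is a nonzero root, the Newton-polygon inequality gives $|\zeta_0|^{qp^n}\ge |c_n|$, i.e. $|\zeta_0|\ge |c_n|^{1/(qp^n)}$, which is exactly the asserted bound.

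The main obstacle is the bookkeeping in the Main Lemma: tracking $A_n,B_n$ through one application of $g$ requires expanding $g(\zeta(1+A_{n-1}\zeta^{i_{n-1}}+B_{n-1}\zeta^{i_{n-1}+q}+\cdots))$ and collecting the two leading non-trivial terms, where the recursion $i_n = p\,i_{n-1}+q$ forces many intermediate terms to interact; in characteristic $p$ the binomial coefficient $\binom{p}{1}=0$ kills the naive leading contribution, so the surviving term comes from a more delicate combination, and the computation of $B_n/A_n$ in terms of $a_2/a_1^2$ is where the value $\tfrac{q+1}{2}$ (resp. the $p=2$ anomaly) enters. I would organize this as: (i) a clean statement of the Main Lemma with $A_n,B_n$ named; (ii) its proof by induction on $n$, isolating the characteristic-$p$ cancellations once and for all; (iii) the extraction of $\Phi_n$ and its lowest coefficient; (iv) the Newton-polygon conclusion. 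Steps (iii)–(iv) are routine once (ii) is in hand; step (ii), and specifically the $p=2$ case where one must push the expansion one order further to see $\resit(f)(1-\resit(f))$, is the real work.
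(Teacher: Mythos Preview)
Your plan matches the paper's approach: conjugate to the normal form~\eqref{e:reduced form} (Lemma~\ref{l:basic normal form reloaded}), use the Main Lemma to compute the leading coefficients~$\delta_n(g^q)$ and~$\delta_{n-1}(g^q)$ of the iterates explicitly, and feed these into the periodic-points lower bound of Lemma~\ref{l:periodic bound} (which packages your divisibility/Newton-polygon step) to get Corollary~\ref{c:generic periodic points lower bound g q}, from which the theorem follows. Two points to tidy: first, you conflate $g=f^q$ (multiplier~$1$) with the normal-form conjugate of~$f$ (multiplier~$\gamma$)---it is the latter that has the shape~\eqref{e:reduced form}, and the Main Lemma computes its $qp^n$-th iterate; second, make explicit that the conjugating series~$h$ lies in~$\Ok[[\zeta]]$ with $h'(0)=1$, so that $|h(\zeta_0)|=|\zeta_0|$ and $\delta_0(f^q)$ is preserved (Lemma~\ref{l:quasi-invariants})---this is exactly what Lemma~\ref{l:basic normal form reloaded} supplies, and without it the reduction would not transport the norm inequality back to~$f$.
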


In the case~$f$ is minimally ramified, we have~$\resit(f) \neq 0$, and also~$\resit(f) \neq 1$ if~$p = 2$, see~\cite[Theorem~E]{LinRiv16} or Theorem~\ref{t:characterization of minimally ramified} in~\S\ref{ss:organization}.
So the following corollary is a direct consequence of the previous theorem and of~\cite[Lemma~$2.1$]{LinRiv16}.
\begin{coroalph}
\label{c:isolated}
Let~$(k, |\cdot|)$ be an ultrametric field of positive characteristic.
Then for each parabolic power series in~$\Ok[[\zeta]]$ that is minimally ramified, the origin is isolated as a periodic point.
\end{coroalph}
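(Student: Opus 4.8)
The plan is to deduce the statement from Theorem~\ref{t:isolated}, from the characterization of minimally ramified power series (Theorem~\ref{t:characterization of minimally ramified}, equivalently~\cite[Theorem~E]{LinRiv16}), and from the description of the admissible minimal periods near the origin from~\cite[Lemma~$2.1$]{LinRiv16}. So fix an ultrametric field~$(k, |\cdot|)$ of positive characteristic, let~$f(\zeta)$ in~$\Ok[[\zeta]]$ be parabolic and minimally ramified, and denote by~$q$ the order of~$f'(0)$. The first step is to unwind the genericity hypothesis: specializing the equality in~\eqref{e:least ramification} to~$n = 0$ gives~$i_0(f^q) = q$, so that~$\delta_0(f^q) \neq 0$ (in particular~$f^q \neq \id$) and the standing hypotheses of Theorem~\ref{t:isolated} are satisfied. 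Moreover, since~$f$ is minimally ramified, Theorem~\ref{t:characterization of minimally ramified} gives~$\resit(f) \neq 0$, and also~$\resit(f) \neq 1$ when~$p = 2$.

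Next I would record that, under these hypotheses, the lower bound supplied by Theorem~\ref{t:isolated} is uniform in~$n$. Since~$|\delta_0(f^q)|$, $|\resit(f)|$ and~$|\resit(f)(1 - \resit(f))|$ are all nonzero, the real number
\begin{equation*}
r
\=
\begin{cases}
|\delta_0(f^q)|^{\frac{1}{q}} \, |\resit(f)|^{\frac{1}{qp}}
& \text{if $p$ is odd;}
\\
\min\left\{ |\delta_0(f^q)|^{\frac{1}{q}} |\resit(f)|^{\frac{1}{2q}} ,\, |\delta_0(f^q)|^{\frac{1}{q}} |\resit(f)(1 - \resit(f))|^{\frac{1}{4q}} \right\}
& \text{if $p = 2$}
\end{cases}
\end{equation*}
is strictly positive, and Theorem~\ref{t:isolated} then shows that every periodic point of~$f$ of minimal period~$qp^n$ with~$n \ge 1$ has norm at least~$r$.

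It remains to handle periodic points of minimal period~$q$, which Theorem~\ref{t:isolated} does not cover; this is the only point requiring a separate argument, and it is immediate from~$\delta_0(f^q) \neq 0$. Such a point is a nonzero fixed point of~$f^q$ lying in~$\mk$, and from~$i_0(f^q) = q$ and~$f^q \in \Ok[[\zeta]]$ one may write~$f^q(\zeta) - \zeta = \zeta^{q + 1} \bigl( \delta_0(f^q) + \zeta h(\zeta) \bigr)$ with~$h(\zeta)$ in~$\Ok[[\zeta]]$; then for~$\zeta$ in~$\mk$ with~$0 < |\zeta| < |\delta_0(f^q)|$ the ultrametric inequality gives~$|\zeta h(\zeta)| \le |\zeta| < |\delta_0(f^q)|$, hence~$|f^q(\zeta) - \zeta| = |\zeta|^{q + 1} |\delta_0(f^q)| > 0$, so~$\zeta$ is not fixed by~$f^q$. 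Finally I would assemble the pieces: by~\cite[Lemma~$2.1$]{LinRiv16} every periodic point of~$f$ in~$\mk \setminus \{ 0 \}$ has minimal period~$qp^n$ for some integer~$n \ge 0$, so, combining the two estimates above, the punctured ball~$\{ \zeta : 0 < |\zeta| < \min\{ r, |\delta_0(f^q)| \} \}$, which is contained in~$\mk \setminus \{ 0 \}$, contains no periodic point of~$f$; since~$\min\{ r, |\delta_0(f^q)| \} > 0$, this says exactly that the origin is isolated as a periodic point. I do not anticipate a genuine obstacle in this argument: the substance is entirely in Theorem~\ref{t:isolated} and in the characterization Theorem~\ref{t:characterization of minimally ramified} (together with Theorem~\ref{t:genericity} for the Main Theorem), and what precedes is the routine bookkeeping turning the norm estimate of Theorem~\ref{t:isolated} into the stated isolation property.
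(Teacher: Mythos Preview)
Your proposal is correct and follows exactly the route the paper indicates: use Theorem~\ref{t:characterization of minimally ramified} to see that~$\resit(f)\neq 0$ (and~$\neq 1$ when~$p=2$), feed this into Theorem~\ref{t:isolated} to get a uniform positive lower bound on~$|\zeta_0|$ for minimal periods~$qp^n$ with~$n\ge 1$, and invoke~\cite[Lemma~2.1]{LinRiv16} for the shape of the admissible periods. Your explicit treatment of the case~$n=0$ (fixed points of~$f^q$) is a welcome addition, since Theorem~\ref{t:isolated} as stated covers only~$n\ge 1$; the paper leaves this case implicit, but it is indeed immediate from~$i_0(f^q)=q<+\infty$ exactly as you argue (and is also the content of part~1 of Lemma~\ref{l:periodic bound}).
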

\subsection{Strategy and organization}
\label{ss:organization}
Our main technical result is a calculation of the first significant
terms of the iterates of a power series in~$k[[\zeta]]$ of the
form~\eqref{e:reduced form}.
This is stated as the Main Lemma in~\S\ref{s:a reduction}.
A direct consequence is an explicit condition on the coefficients~$a_1$
and~$a_2$ for the power series~$g(\zeta)$ to be minimally ramified
(Corollary~\ref{c:minimally ramified q series}).
Theorem~\ref{t:genericity} is obtained from this result using that
every parabolic power series is conjugated to a power series of the
form~\eqref{e:reduced form} by an invertible map in~$\Ok[[\zeta]]$ (Lemma~\ref{l:basic normal form reloaded}).
The same strategy allows us to give a new and self-contained proof of the
following characterization of minimally ramified power series,\footnote{The proof of this result in~\cite{LinRiv16} relies
on~\cite[Corollary~$1$]{LauSai98}; the Main Lemma allows us to give a direct proof that avoids this last result.}
see~\S\ref{ss:proof of genericity and isolated}.
\begin{theoalph}[\cite{LinRiv16}, Theorem~E]
\label{t:characterization of minimally ramified}
Let~$p$ be a prime number and~$k$ a field of characteristic~$p$.
Moreover, let~$f(\zeta)$ be a parabolic power series in~$k[[\zeta]]$,
and denote by~$q$ the order of~$f'(0)$.
If~$p$ is odd (resp. $p = 2$), then~$f$ is minimally ramified if and
only if
$$ i_0(f^q) = q
\text{ and }
\resit(f) \neq 0 $$
$$ \left( \text{resp. } i_0(f^q) = q,
\resit(f) \neq 0,
\text{ and }
\resit(f) \neq 1 
\right). $$
\end{theoalph}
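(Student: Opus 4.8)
The plan is to derive Theorem~\ref{t:characterization of minimally ramified} from Corollary~\ref{c:minimally ramified q series}, which gives the analogous statement for a power series already in the reduced form~\eqref{e:reduced form}, together with a reduction to that form. The first observation is that for an invertible power series $h(\zeta) \in k[[\zeta]]$ with $h(0) = 0$, and any $g(\zeta) \in k[[\zeta]]$ with $g(0) = 0$ and $g'(0) = 1$, one has $i_n(h^{-1} \circ g \circ h) = i_n(g)$ for all $n \ge 0$: writing $g^{p^n}(\zeta) = \zeta + \psi(\zeta)$ with $\ord(\psi) = i_n(g) + 1$, one computes $(h^{-1} \circ g \circ h)^{p^n}(\zeta) - \zeta = (h^{-1})'(h(\zeta)) \cdot \psi(h(\zeta)) + \cdots$, whose order equals $\ord(\psi)$ since $h'(0)$ is a unit. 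Hence ``minimally ramified'' and the condition $i_0(f^q) = q$ are invariant under conjugacy by invertible power series fixing the origin, and so is $\resit(f)$ by~\cite[Lemma~$4.3$]{LinRiv16}. If $\gamma := f'(0) = 1$, then $q = 1$ and $f$ is already of the form~\eqref{e:reduced form}; if $\gamma \neq 1$, Lemma~\ref{l:basic normal form reloaded} (or~\cite[Proposition~$4.1$]{LinRiv16}) produces an invertible $h(\zeta) \in k[[\zeta]]$ with $h(0) = 0$ conjugating $f$ to a power series of the form~\eqref{e:reduced form}. In either case it suffices to prove the characterization for that conjugate, so we may and do assume $f(\zeta) = \gamma\zeta(1 + a_1 \zeta^q + a_2 \zeta^{2q} + \cdots)$ is of the form~\eqref{e:reduced form}; recall that then $\resit(f) = 1 - a_2/a_1^2$ if $\gamma = 1$, while $\resit(f) = \tfrac{q+1}{2} - a_2/a_1^2$ if $\gamma \neq 1$ (here $q$ is coprime to $p$, hence odd when $p = 2$, so $\tfrac{q+1}{2}$ is a well-defined element of the prime field).

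Next I would identify the hypothesis $i_0(f^q) = q$ with $a_1 \neq 0$. A short induction using $\gamma^q = 1$ shows $f^q(\zeta) = \zeta + q\, a_1 \zeta^{q+1} + \cdots$, so $\delta_0(f^q) = q\, a_1$; since $q \neq 0$ in $k$, this is nonzero exactly when $a_1 \neq 0$. By~\eqref{e:least ramification} with $n = 0$ we have $i_0(f^q) \ge q$, with equality if and only if $\delta_0(f^q) \neq 0$, i.e. if and only if $a_1 \neq 0$ --- which is precisely the regime in which $\resit(f)$ is defined.

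Then I would invoke Corollary~\ref{c:minimally ramified q series}: assuming $a_1 \neq 0$, it characterizes minimal ramification of $f$ (in reduced form) by an explicit (in)equality in $a_1$ and $a_2$ depending on $p$ and $q$. Substituting the formula for $\resit(f)$ above, this (in)equality becomes $\resit(f) \neq 0$ when $p$ is odd, and $\resit(f) \neq 0$ together with $\resit(f) \neq 1$ (equivalently $\resit(f)(1 - \resit(f)) \neq 0$) when $p = 2$. Combining with the equivalence $i_0(f^q) = q \Leftrightarrow a_1 \neq 0$ from the previous step yields exactly Theorem~\ref{t:characterization of minimally ramified} for $f$ in reduced form, and then, by the conjugacy invariance established in the first step, for an arbitrary parabolic power series.

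The substance of the argument lies entirely in the Main Lemma and its Corollary~\ref{c:minimally ramified q series}: being minimally ramified is a priori the infinite family of conditions $i_n(f^q) = q\frac{p^{n+1} - 1}{p - 1}$, $n \ge 0$, and the non-obvious point --- which the Main Lemma settles by exhibiting the leading significant terms of every iterate $f^{p^n q}$ --- is that once $a_1 \neq 0$ and the single constraint on $a_2/a_1^2$ hold, all of these equalities follow automatically. Within the present deduction the only delicate points are the bookkeeping in the reduction (checking that $i_n$, the property of being minimally ramified, the condition $i_0(f^q) = q$, and $\resit$ all descend through the conjugacy) and tracking the two normalizations of $\resit$ (the cases $\gamma = 1$ and $\gamma \neq 1$) and the dichotomy $p$ odd versus $p = 2$ consistently through the translation of the Corollary's polynomial condition; I expect this last case analysis to be the main, if modest, obstacle.
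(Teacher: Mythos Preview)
Your proposal is correct and follows essentially the same route as the paper: reduce to the normal form~\eqref{e:reduced form} using conjugacy invariance of~$i_n$ and of~$\resit$, identify~$i_0(f^q)=q$ with~$a_1\neq 0$, and then translate the polynomial conditions of Corollary~\ref{c:minimally ramified q series} into the~$\resit$ conditions. The only cosmetic differences are that the paper invokes Lemma~\ref{l:quasi-invariants} for the invariance of~$i_n$ and reads off~$\delta_0(f^q)=qa_1$ from~\eqref{e:generic form g after q iterates}, whereas you sketch both directly; your anticipated case split on~$\tfrac{q+1}{2}\in\{0,1\}$ when~$p=2$ is exactly what is needed to match~$\{a_2\neq 0,\ a_2\neq a_1^2\}$ with~$\{\resit(f)\neq 0,\ \resit(f)\neq 1\}$.
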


To prove Theorem~\ref{t:isolated}, we first prove a version for
parabolic power series of the Period Points Lower Bound
of~\cite{LinRiv16} (Lemma~\ref{l:periodic bound}).
Combined with the Main Lemma in~\S\ref{s:a reduction} this yields a lower bound for the norm of
the periodic points of a power series of the form~\eqref{e:reduced form} (Corollary~\ref{c:generic periodic points lower bound g q}).
Theorem~\ref{t:isolated} is then obtained using again that every parabolic
power series is conjugated to a power series of the
form~\eqref{e:reduced form} by an invertible map in~$\Ok[[\zeta]]$ (Lemma~\ref{l:basic normal form reloaded}).

After some preliminaries in~\S\ref{s:periodic of parabolic}, we state the
Main Lemma and deduce its Corollaries~\ref{c:minimally ramified q
  series} and~\ref{c:generic periodic points lower bound g q} in the first part of~\S\ref{s:a reduction}.
The proofs of Theorems~\ref{t:genericity}, \ref{t:isolated},
and~\ref{t:characterization of minimally ramified} assuming the Main Lemma are given in~\S\ref{ss:proof of genericity and isolated}.
Finally, the proof of the Main Lemma is given in~\S\ref{s:proof of the
  main lemma}.
As mentioned above, the Main Theorem is a direct consequence of
Theorems~\ref{t:genericity} and~\ref{t:isolated}.

\subsection{Acknowledgement}
We would like to thank the referee for comments and corrections that helped us improve the presentation.

\section{Periodic points of parabolic power series}
\label{s:periodic of parabolic}

After some preliminaries in~\S\ref{ss:preliminaries},
in~\S\ref{ss:periodic of parabolic} we give some basic facts about periodic
points of parabolic power series with integer coefficients.
In particular, we give a general lower bound for the distance to the
origin of a periodic point (Lemma~\ref{l:periodic bound}) that is used in the proof Theorem~\ref{t:isolated}.

\subsection{Preliminaries}
\label{ss:preliminaries}
Given a ring~$R$ and an element~$a$ of~$R$, we denote by~$\langle a \rangle$ the ideal of~$R$ generated by~$a$.

Let~$(k, | \cdot |)$ be an ultrametric field.
Denote by~$\Ok$ the ring of integers of~$k$, by~$\mk$ the maximal ideal of~$\Ok$, and by~$\wtk \= \Ok / \mk$ the residue field of~$k$.
Moreover, denote the projection in~$\wtk$ of an element~$a$ of~$\Ok$ by~$\widetilde{a}$; it is the \emph{reduction of~$a$}.
The \emph{reduction} of a power series~$f(\zeta)$ in~$\Ok[[\zeta]]$ is the power series~$\widetilde{f}(z)$ in~$\wtk[[z]]$ whose coefficients are the reductions of the corresponding coefficients of~$f$.

For a power series~$f(\zeta)$ in~$\Ok[[\zeta]]$, the \emph{Weierstrass degree~$\wideg(f)$ of~$f$} is the order in~$\wtk[[z]]$ of the reduction~$\wtf(z)$ of~$f(\zeta)$.
When~$\wideg(f)$ is finite, the number of zeros of~$f$ in~$\mk$,
counted with multiplicity, is less than or equal to~$\wideg(f)$; see for example~\cite[{\S}VI, Theorem~$9.2$]{Lan02}.

A power series~$f(\zeta)$ in~$\Ok[[\zeta]]$ converges in~$\mk$.
If in addition~$|f(0)| < 1$, then by the ultrametric inequality~$f$ maps~$\mk$ to itself.
In this case a point~$\zeta_0$ in~$\mk$ is \emph{periodic for~$f$}, if there is an integer~$n \ge 1$ such that~$f^n(\zeta_0) = \zeta_0$.
In this case \emph{$\zeta_0$ is of period~$n$}, and~$n$ is a \emph{period of~$\zeta_0$}.
If in addition~$n$ is the least integer with this property, then~$n$ is the \emph{minimal period of~$\zeta_0$} and~$(f^n)'(\zeta_0)$ is the \emph{multiplier of~$\zeta_0$}.
Note that an integer~$n \ge 1$ is a period of~$\zeta_0$ if and only if it is divisible by the minimal period of~$\zeta_0$.

The following definition is consistent with the
definition of~$\delta_0(\cdot)$ in the introduction.

\begin{defi}
\label{d:multiplicity}
Let~$p$ be a prime number and~$k$ field of characteristic~$p$.
For a power series~$g(\zeta)$ in~$k[[\zeta]]$ satisfying~$g(0) = 0$
and~$g'(0) = 1$, define for each integer~$n \ge 0$ the element~$\delta_n(g)$ of~$k$ as follows:
Put~$\delta_n(g) \= 0$ if~$i_n(g) = + \infty$, and otherwise
let~$\delta_n(g)$ be the coefficient
of~$\zeta^{i_n(g)+1}$ in the power series~$g^{p^n}(\zeta) - \zeta$.
\end{defi}

Note that in the case where~$i_n(g)$ is finite, $\delta_n(g)$ is
nonzero.
Moreover, if~$k$ is an ultrametric field and~$g(\zeta)$ is in
$\Ok[[\zeta]]$, then~$g^{p^n}(\zeta) - \zeta$ is also
in~$\Ok[[\zeta]]$, and therefore~$\delta_n(g)$ is in~$\Ok$.
\begin{lemm}
  \label{l:quasi-invariants}
Let~$p$ be a prime number and~$k$ field of characteristic~$p$.
Moreover, let~$f(\zeta)$ and~$\whf(\zeta)$ be parabolic power series
in~$k[[\zeta]]$ and denote by~$q$ the order of~$f'(0)$.
Suppose there is a power series~$h(\zeta)$ in~$k[[\zeta]]$ such that
$$ h(0) = 0,
h'(0) \neq 0,
\text{ and }
f \circ h = h \circ \whf. $$
Then~$\whf'(0) = f'(0)$ and for every integer~$n \ge 0$ we have
$$ i_n(\whf^q) = i_n(f^q)
\text{ and }
\delta_n(\whf^q) = (h'(0))^{i_n(f^q)}\cdot \delta_n(f^q). $$
\end{lemm}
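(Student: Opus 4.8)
The plan is to track how the two quantities $i_n$ and $\delta_n$ transform under a change of coordinate tangent to a scaling. Write $h(\zeta) = b\zeta + (\text{higher order})$ with $b \= h'(0) \neq 0$, and set $F \= f^q$, $\hat F \= \whf^q$. First I would observe that the conjugacy $f \circ h = h \circ \whf$ immediately gives $\whf'(0) = f'(0)$ (differentiate at $0$), hence $\whf$ and $f$ have multipliers of the same order $q$, and iterating $q$ times yields $F \circ h = h \circ \hat F$ with $F'(0) = \hat F'(0) = 1$. So it suffices to prove the statement for power series tangent to the identity, i.e.\ to show: if $F \circ h = h \circ \hat F$ with $F(0)=\hat F(0)=0$, $F'(0)=\hat F'(0)=1$, $h(0)=0$, $h'(0)=b\neq 0$, then for all $n$, $i_n(\hat F) = i_n(F)$ and $\delta_n(\hat F) = b^{i_n(F)}\delta_n(F)$.

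The key step is a bookkeeping lemma at the level of iterates: from $F\circ h = h\circ\hat F$ one gets $F^{p^n}\circ h = h\circ \hat F^{p^n}$ for every $n$ (by induction on the number of compositions, using that conjugation commutes with taking powers). So it is enough to analyze a single identity $G \circ h = h \circ \hat G$ where $G \= F^{p^n}$, $\hat G \= \hat F^{p^n}$, both tangent to the identity. Write $G(\zeta) - \zeta = \zeta \cdot u(\zeta)$ where $\ord(u) = i_n(F) \=: i$ (if $i = +\infty$ then $G = \mathrm{id}$, and the conjugacy forces $\hat G = \mathrm{id}$, so both $i_n$ are $+\infty$ and both $\delta_n$ are $0$; assume now $i$ finite). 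The leading term of $u$ is $\delta_n(F)\zeta^{i}$. Then compute $h(\hat G(\zeta)) - h(\zeta)$ in two ways. On one side, $h(\hat G(\zeta)) - h(\zeta) = h'(\zeta)(\hat G(\zeta) - \zeta) + O((\hat G(\zeta)-\zeta)^2)$; since $h'(0) = b \neq 0$ and, writing $\hat G(\zeta) - \zeta = \zeta \hat u(\zeta)$ with $\ord(\hat u) \=: \hat i$, the dominant term is $b\,\delta_n(\hat F)\,\zeta^{\hat i + 1}$ provided $2\hat i + 1 > \hat i + 1$, i.e.\ $\hat i \geq 1$, which holds. On the other side, $G(h(\zeta)) - h(\zeta) = h(\zeta)\, u(h(\zeta))$; since $h(\zeta) = b\zeta(1 + \cdots)$ and $u(h(\zeta))$ has leading term $\delta_n(F) (b\zeta)^{i}$, the dominant term is $\delta_n(F) b^{i+1} \zeta^{i+1}$. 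Equating orders gives $\hat i = i$, i.e.\ $i_n(\hat F) = i_n(F)$; equating leading coefficients gives $b\,\delta_n(\hat F) = b^{i+1}\delta_n(F)$, hence $\delta_n(\hat F) = b^{i}\delta_n(F)$, as claimed (with $i = i_n(F) = i_n(\hat F)$).

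I expect the main obstacle to be purely notational rather than conceptual: one must justify carefully that the expansions $h(\hat G(\zeta)) - h(\zeta) = b(\hat G(\zeta) - \zeta) + (\text{terms of order} \geq 2(\hat i + 1))$ and $u(h(\zeta)) = \delta_n(F)(h(\zeta))^{i} + (\text{terms of order} \geq i+1 \text{ in } \zeta)$ really do isolate the claimed lowest-order term, and in particular that no cancellation occurs at that order — this is where the hypothesis $b \neq 0$ (so that $h$ is invertible in $k[[\zeta]]$) and $\hat i \geq 1$ (so that $\hat G$ is tangent to the identity) are used. A clean way to organize this, avoiding the bilinear error terms, is to use the invertibility of $h$: rewrite the conjugacy as $\hat G = h^{-1} \circ G \circ h$, note $h^{-1}(\zeta) = b^{-1}\zeta(1 + \cdots)$, and then directly expand $\hat G(\zeta) - \zeta = h^{-1}(G(h(\zeta))) - h^{-1}(h(\zeta)) = h^{-1}(h(\zeta) + h(\zeta)u(h(\zeta))) - h^{-1}(h(\zeta))$; applying the mean-value-type expansion of $h^{-1}$ and using $(h^{-1})'(0) = b^{-1}$ together with $\ord(h(\zeta)u(h(\zeta))) = i + 1 \geq 2$ yields the leading term $b^{-1}\cdot h(\zeta)u(h(\zeta))$, whose lowest-order part is $b^{-1}\cdot b\zeta \cdot \delta_n(F)(b\zeta)^{i} = \delta_n(F) b^{i}\zeta^{i+1}$. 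Reading off order and leading coefficient gives $i_n(\hat F) = i_n(F)$ and $\delta_n(\hat F) = b^{i_n(F)}\delta_n(F)$ simultaneously. Finally, recalling $b = h'(0)$, this is exactly the assertion of Lemma~\ref{l:quasi-invariants}, and the case $i_n(F) = +\infty$ handled above completes the proof.
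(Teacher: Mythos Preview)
Your proof is correct and follows essentially the same approach as the paper: reduce to the iterates $f^{qp^n}\circ h = h\circ \whf^{qp^n}$, dispose of the case $i_n=+\infty$ separately, and then compare the two sides of the conjugacy to lowest order to read off both $i_n(\whf^q)=i_n(f^q)$ and $\delta_n(\whf^q)=h'(0)^{i_n(f^q)}\delta_n(f^q)$. The paper carries this out by writing $h(\zeta)=\zeta\sum c_i\zeta^i$ and expanding $f^{qp^n}\circ h$ and $h\circ\whf^{qp^n}$ explicitly modulo $\zeta^{i_n+2}$ (resp.\ $\zeta^{\whi_n+2}$), whereas you package the same computation in ``leading term / $O(\cdot)$'' language; your alternative version via $h^{-1}$ is slightly cleaner because it directly exhibits the leading term of $\hat G(\zeta)-\zeta$ and in particular shows $\hat i<+\infty$ whenever $i<+\infty$, a point the paper handles by explicitly noting that the roles of $f$ and $\whf$ can be interchanged.
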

\begin{proof}
From~$f \circ h = h \circ \whf$ we have~$f'(0) h'(0) = h'(0)
\whf'(0)$.
Together with our assumption~$h'(0) \neq 0$ this implies~$\whf'(0) = f'(0)$.

Fix an integer~$n \ge 0$, and note that~$f^{qp^n} \circ h = h \circ
\whf^{qp^n}$.
If~$i_n(f) = + \infty$, then~$\delta_n(f^{qp^n}) = 0$ and~$f^{qp^n}(\zeta)
= \zeta$.
This implies~$\whf^{qp^n}(\zeta) = \zeta$, so~$i_n(\whf^q) = + \infty$
and~$\delta_n(\whf^q) = 0$.
This proves the lemma when~$i_n(f) = + \infty$.
Interchanging the roles of~$f$ and~$\whf$, this also proves the lemma
when~$i_n(\whf) = + \infty$.
Suppose~$i_n(f)$ and~$i_n(\whf)$ are finite, and put
$$ i_n \= i_n(f^q), \text{ }
\whi_n \= i_n(\whf^q),
\text{ and }
h(\zeta) = \zeta \cdot \sum_{i = 0}^{+ \infty} c_i \zeta^i. $$
Then we have
  \begin{multline*}
  \begin{aligned}
f^{qp^n} \circ h(\zeta)
& \equiv
\zeta \left( c_0 + c_1 \zeta + \cdots + c_{i_n} \zeta^{i_n} \right)
\left( 1 + \delta_n(f^q) (c_0 \zeta)^{i_n} \right)
\mod \left\langle \zeta^{i_n + 2} \right\rangle
\\ & \equiv
\zeta \left( c_0 + c_1 \zeta + \cdots + c_{i_n - 1} \zeta^{i_n - 1}
+ \left( c_{i_n} + c_0^{i_n + 1} \delta_n(f^q) \right) \zeta^{i_n} \right)
  \end{aligned}
\\
\mod \left\langle \zeta^{i_n + 2} \right\rangle.    
\end{multline*}
On the other hand,
\begin{multline*}
  \begin{aligned}
    h \circ \whf^q (\zeta)
& \equiv
\zeta \left(1 + \delta_n(\whf^q) \zeta^{\whi_n} \right)
\left( c_0 + c_1 \zeta + \cdots + c_{\whi_n} \zeta^{\whi_n} \right)
\mod \left\langle \zeta^{\whi_n + 2} \right\rangle.
\\ & \equiv
\zeta \left( c_0 + c_1 \zeta + \cdots + c_{\whi_n - 1} \zeta^{\whi_n - 1} +
  \left( c_{\whi_n} + c_0 \delta_n(\whf^q) \right) \zeta^{\whi_n} \right)
  \end{aligned}
\\
\mod \left\langle \zeta^{\whi_n + 2} \right\rangle.
\end{multline*}
Comparing coefficients and using that~$c_0$, $\delta_n(f^q)$,
and~$\delta_n(\whf^q)$ are all different from zero, we conclude
that~$i_n = \whi_n$ and that~$\delta_n(\whf^q) = c_0^{i_n}
\delta_n(f^q)$, as wanted.
\end{proof}

\subsection{Periodic points of parabolic power series}
\label{ss:periodic of parabolic}
The following lemma is well-known, see for
example~\cite[Lemma~$2.1$]{LinRiv16} for a proof.
\begin{lemm}
\label{l:admissible periods}
Let~$p$ be a prime number and~$k$ an ultrametric field of
characteristic~$p$.
Moreover, let~$f(\zeta)$ be a parabolic power series in~$k[[\zeta]]$
and denote by~$q$ the order of~$f'(0)$.
Then~$q$ is not divisible by~$p$, and the minimal period of each
periodic point of~$f$ is of the form~$q p^n$ for some integer~$n \ge 0$.
\end{lemm}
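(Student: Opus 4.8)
The plan is to establish the two assertions separately: first that $p \nmid q$, then that every minimal period is of the form $q p^n$.

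\smallskip\noindent\emph{Step 1: $p\nmid q$, and a remark on reductions.} Since $k$ has characteristic $p$, the identity $X^{p^a}-1=(X-1)^{p^a}$ shows that $1$ is the only $p^a$-th root of unity in $k$ for every $a\ge 1$; hence no element of $k^\times$ has order divisible by $p$, so $q=\ord(f'(0))$ is prime to $p$. The same observation shows the reduction map is injective on the roots of unity of $\Ok$; in particular, if $\beta\in\Ok$ is a root of unity with $\beta\neq 1$, then $\widetilde\beta\neq 1$ and hence $\beta-1\in\Ok^\times$. I will also use that periodic points and their minimal periods are unaffected by an extension of the ground field, so one may assume $(k,|\cdot|)$ complete and algebraically closed, and (after a scale change) that $f\in\Ok[[\zeta]]$; then for $\zeta\in\mk$ we have $f(\zeta)=f'(0)\zeta(1+\varepsilon(\zeta))$ with $\varepsilon(\zeta)\in\mk$, so $|f(\zeta)|=|\zeta|$ and $f$ acts isometrically on $\mk$ — in particular each periodic orbit lies on a single sphere $\{|\zeta|=r\}$ with $r<1$. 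Finally $(f^N)'(0)=f'(0)^N$ for all $N\ge 1$.

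\smallskip\noindent\emph{Step 2: $q$ divides every minimal period.} Let $\zeta_0\in\mk\setminus\{0\}$ have minimal period $N$, and suppose $f'(0)^N\neq 1$. Then $f^N(\zeta)-\zeta=(f'(0)^N-1)\zeta+\cdots$, whose lowest-degree coefficient $f'(0)^N-1$ is a unit of $\Ok$ by Step~1; thus $\wideg(f^N(\zeta)-\zeta)=1$, and since the number of zeros of a power series in $\Ok[[\zeta]]$ lying in $\mk$, counted with multiplicity, is at most its Weierstrass degree, $f^N(\zeta)-\zeta$ has at most one zero in $\mk$, necessarily $\zeta=0$. This contradicts $f^N(\zeta_0)=\zeta_0\neq 0$. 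Hence $f'(0)^N=1$, i.e.\ $q\mid N$.

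\smallskip\noindent\emph{Step 3: the cofactor is a power of $p$.} Write $N=qp^\nu m$ with $p\nmid m$; the goal is $m=1$. The series $g\=f^{qp^\nu}$ satisfies $g(0)=0$, $g'(0)=f'(0)^{qp^\nu}=1$, and $\zeta_0$ has minimal period $m$ under $g$. If $m>1$, fix a prime $\ell\mid m$; then $\ell\neq p$, and $h\=g^{m/\ell}$ is again tangent to the identity with $\zeta_0$ of minimal period $\ell$. In particular $h\neq\id$, so $u(\zeta)\=h(\zeta)-\zeta$ has finite order $e\=\ord(u)\ge 2$. Using that $u(\zeta+w(\zeta))\equiv u(\zeta)\pmod{\langle\zeta^{e+1}\rangle}$ whenever $\ord(w)\ge e$ (which holds because $2e-1\ge e+1$), a straightforward induction gives $h^j(\zeta)\equiv\zeta+ju(\zeta)\pmod{\langle\zeta^{e+1}\rangle}$ for every $j\ge 0$, and therefore
$$h^\ell(\zeta)-\zeta\equiv\ell\,u(\zeta)\pmod{\langle\zeta^{e+1}\rangle};$$
as $p\nmid\ell$ this forces $\ord(h^\ell(\zeta)-\zeta)=e$ as well. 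To finish it suffices to show that $h^\ell(\zeta)-\zeta$ and $h(\zeta)-\zeta$ have exactly the same zeros in $\mk$: then $\zeta_0$, being a zero of the former, would be a fixed point of $h$, contradicting that its minimal period under $h$ is $\ell>1$.

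\smallskip\noindent\emph{The main obstacle.} The crux is this last statement: that for $p\nmid\ell$ the passage from $h$ to $h^\ell$ creates no new periodic point in $\mk$. It is immediate when the leading coefficient of $u$ is a unit of $\Ok$, for then $u(\zeta)=\zeta^e\cdot(\text{unit})$, so $\wideg(h^\ell(\zeta)-\zeta)=e=\wideg(h(\zeta)-\zeta)$, while $0$ is already a zero of $h(\zeta)-\zeta$ of multiplicity $e$ and the Weierstrass bound leaves no room for further zeros. In general, however, the leading coefficient of $u$ — a unit multiple of $\delta_\nu(f^q)$ — need not be a unit, and one must instead compare the Newton polygons of $h^\ell(\zeta)-\zeta$ and of $h(\zeta)-\zeta$ to see that every extra zero of the former has norm $\ge 1$. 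Making this comparison precise, which ultimately rests on the strictly increasing chain $i_0(h)<i_1(h)<\cdots$ and the lower-ramification structure of $h$, is the only substantial point; a detailed argument is given in \cite[Lemma~$2.1$]{LinRiv16}.
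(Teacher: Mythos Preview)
The paper gives no proof of this lemma; it simply refers to \cite[Lemma~$2.1$]{LinRiv16}. Your argument is more detailed and is correct as far as it goes, but you have misjudged the final step: what you call the ``main obstacle'' is not an obstacle at all, and your proof is one line away from being self-contained.

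By Lemma~\ref{l:fixed are periodic} (which the paper quotes), $h(\zeta)-\zeta$ divides $h^\ell(\zeta)-\zeta$ in~$\Ok[[\zeta]]$; write $h^\ell(\zeta)-\zeta=(h(\zeta)-\zeta)\,Q(\zeta)$ with $Q\in\Ok[[\zeta]]$. Your own congruence $h^\ell(\zeta)-\zeta\equiv\ell\,u(\zeta)\pmod{\langle\zeta^{e+1}\rangle}$ then gives $Q(0)=\ell$. Since~$k$ has characteristic~$p$ and $p\nmid\ell$, the element~$\ell$ lies in~$\F_p^\times\subset\Ok^\times$, so~$Q$ is a unit of~$\Ok[[\zeta]]$ and hence has no zeros in~$\mk$. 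Therefore every zero of $h^\ell(\zeta)-\zeta$ in~$\mk$ is already a zero of $h(\zeta)-\zeta$, and the contradiction follows immediately. No Newton polygons or lower-ramification structure is needed; the whole point is that in characteristic~$p$ the constant term~$Q(0)=\ell$ is automatically a unit, regardless of whether the leading coefficient of~$u$ is. (Incidentally, the passage to a prime divisor~$\ell$ of~$m$ is also unnecessary: the same argument works directly with~$m$ in place of~$\ell$, since $p\nmid m$.)
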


The following lemma is a version of~\cite[Lemma~$2.3$]{LinRiv16} for
parabolic power series, with a similar proof.
We have restricted to ground fields of positive characteristic for simplicity.
It is one of the ingredients in the proof of Theorem~\ref{t:isolated}.
Before stating the lemma we recall that for a power series~$f(\zeta)$ in~$\Ok[[\zeta]]$, 
the Weierstrass degree~$\wideg(f)$ of~$f$ is the order in~$\wtk[[z]]$ of the reduction~$\wtf(z)$ of~$f(\zeta)$.

\begin{lemm}[Periodic points lower bound for parabolic series]
\label{l:periodic bound}
Let~$p$ be a prime number and~$(k, | \cdot |)$ an ultrametric field of characteristic~$p$.
Moreover, let~$f(\zeta)$ be a parabolic power series in~$\Ok[[\zeta]]$ and denote by~$q$ the order of~$f'(0)$.
Then the following properties hold.
\begin{enumerate}
\item[1.]
Let~$w_0$ 
in $\mathfrak{m}_k$      
be a periodic point of~$f$ of minimal period~$q$.
In the case~$q = 1$, assume~$w_0 \neq 0$.
Then we have
\begin{equation}
\label{e:fixed bound}
|w_0| \ge |\delta_{0}(f^q)|^{\frac{1}{q}},
\end{equation}
with equality if and only
if
\begin{equation}
\label{e:minimal fixed wideg increment}
  \wideg \left( f^q(\zeta) - \zeta \right) = i_0(f^q)+q + 1.
\end{equation}
Moreover, if (\ref{e:minimal fixed wideg increment}) holds, then the cycle containing~$w_0$ is the only cycle of 
minimal period~$q$ of~$f$ in~$\mk \setminus \{ 0 \}$, and for every point~$w_0'$ in 
this cycle~$|w_0'|=|\delta_0(f^q)|^{\frac{1}{q}}$.
\item[2.]
Let~$n \ge 1$ be an integer and~$\zeta_0$ 
in $\mathfrak{m}_k$ 
a periodic point of~$f$ of minimal period~$q p^n$.
If in addition~$i_n(f^q)< + \infty$, then we have
\begin{equation}
\label{e:periodic bound}
|\zeta_0|
\ge
\left| \frac{\delta_n(f^q)}{\delta_{n-1}(f^q)} \right|^{\frac{1}{q p^n}},
\end{equation}
with equality 
if and only if
\begin{equation}
\label{e:minimal wideg increment}
\wideg \left( \frac{f^{q p^n}(\zeta) - \zeta}{f^{q p^{n - 1}}(\zeta) - \zeta} \right)
=
i_n(f^q)-i_{n-1}(f^q)+q p^n.
\end{equation}
Moreover, if (\ref{e:minimal wideg increment}) holds, 
then the cycle containing~$\zeta_0$ is the only cycle of minimal period~$qp^n$ of~$f$ in $\mathfrak{m}_k$, 
and for every point~$\zeta_0'$ in this cycle 
$|\zeta_0'|
=
\left| \frac{\delta_n(f^q)}{\delta_{n-1}(f^q)} \right|^{\frac{1}{q p^n}}$.
\end{enumerate}
\end{lemm}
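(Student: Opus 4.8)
The plan is to treat both parts at once. Put $N \= q$ in part~1 and $N \= q p^n$ in part~2, let~$w_0$ denote the given periodic point of minimal period~$N$ (so $w_0 \neq 0$: this is part of the hypothesis when $q = 1$, and automatic otherwise since~$0$ has minimal period~$1$), and introduce
$$ G(\zeta) \= f^q(\zeta) - \zeta \quad \text{(part~1)}, \qquad G(\zeta) \= \frac{f^{q p^n}(\zeta) - \zeta}{f^{q p^{n-1}}(\zeta) - \zeta} \quad \text{(part~2)}. $$
In part~1 one may assume $i_0(f^q) < +\infty$, since otherwise $f^q = \identity$ and the statement is trivial; in part~2 the hypothesis $i_n(f^q) < +\infty$ forces $i_{n-1}(f^q) < +\infty$ by the monotonicity recalled after Definition~\ref{d:ramification number}, and the first point to settle is that~$G$ then still lies in~$\Ok[[\zeta]]$, i.e.\ that $h(\zeta) \= f^{q p^{n-1}}(\zeta) - \zeta$ divides $f^{q p^n}(\zeta) - \zeta$ in~$\Ok[[\zeta]]$. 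To prove this I would take the $\Ok$-algebra automorphism~$\varphi$ of~$\Ok[[\zeta]]$ with $\varphi(\zeta) = f^{q p^{n-1}}(\zeta) = \zeta + h(\zeta)$, observe that $f^{q p^n}(\zeta) - \zeta = \sum_{j=0}^{p-1} \varphi^j(h)$, and reduce to showing $h \mid \varphi^j(h)$ for each~$j$. Since~$h$ has order~$\ge 2$, writing $h(\zeta + \eta) = h(\zeta) + \eta \, r(\zeta, \eta)$ with $r(\zeta, 0) = h'(\zeta)$ of order~$\ge 1$ and substituting $\eta = h(\zeta)$ gives $\varphi(h) = h(\zeta) \bigl( 1 + r(\zeta, h(\zeta)) \bigr)$ with $1 + r(\zeta, h(\zeta))$ a unit of~$\Ok[[\zeta]]$; hence~$\varphi$ preserves the ideal~$\langle h \rangle$ and descends to the identity of~$\Ok[[\zeta]] / \langle h \rangle$ (a continuous $\Ok$-algebra endomorphism fixing~$\zeta$), so $\varphi^j(h) \equiv h \equiv 0$ modulo~$\langle h \rangle$ for all~$j$. (Alternatively one may cite the corresponding statement of~\cite{LinRiv16}.)

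Next, Definition~\ref{d:multiplicity} identifies the order of~$G$ in~$\Ok[[\zeta]]$ as $m \= i_0(f^q) + 1$ in part~1 and as $m \= i_n(f^q) - i_{n-1}(f^q)$ in part~2, and the coefficient of~$\zeta^m$ in~$G$ as the nonzero element $\Delta \= \delta_0(f^q)$ of~$\Ok$ in part~1 and $\Delta \= \delta_n(f^q) / \delta_{n-1}(f^q)$ in part~2; so $G(\zeta) = \zeta^m \hG(\zeta)$ with $\hG \in \Ok[[\zeta]]$ and $\hG(0) = \Delta$. Because~$f'(0)$ is a root of unity, $|f(\zeta)| = |\zeta|$ for every~$\zeta$ in~$\mk$, so the~$N$ points $w_0, f(w_0), \dots, f^{N-1}(w_0)$ are pairwise distinct elements of~$\mk$ of common norm~$|w_0|$, and each of them is a zero of~$G$ --- in part~2 because, having minimal period~$q p^n$, it is a zero of $f^{q p^n}(\zeta) - \zeta$ but not of $f^{q p^{n-1}}(\zeta) - \zeta$ --- hence a zero of~$\hG$. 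Dividing out the monic linear polynomials $\zeta - f^j(w_0)$ one at a time (each is a Weierstrass division in~$\Ok[[\zeta]]$ whose remainder must vanish, as $f^j(w_0) \in \mk$) gives
$$ \hG(\zeta) = \Bigl( \prod_{j=0}^{N-1} \bigl( \zeta - f^j(w_0) \bigr) \Bigr) H(\zeta), \qquad H \in \Ok[[\zeta]]. $$
Evaluation at~$\zeta = 0$ then yields $\Delta = \hG(0) = (-1)^N \bigl( \prod_{j=0}^{N-1} f^j(w_0) \bigr) H(0)$, hence $|\Delta| = |w_0|^N \, |H(0)| \le |w_0|^N$, which is~\eqref{e:fixed bound} in part~1 and~\eqref{e:periodic bound} in part~2.

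For sharpness I would run the chain of equivalences: $|w_0| = |\Delta|^{1/N}$ iff $|H(0)| = 1$ iff $H(0) \in \Ok^{\times}$ iff $\tH(0) \neq 0$. Reducing the factorization above and using $f^j(w_0) \in \mk$ gives $\tG(z) = z^{m+N} \tH(z)$, so $\wideg(G) = m + N + \ord(\tH)$, and therefore $|w_0| = |\Delta|^{1/N}$ iff $\wideg(G) = m + N$. As $m + N = i_0(f^q) + q + 1$ in part~1 and $m + N = i_n(f^q) - i_{n-1}(f^q) + q p^n$ in part~2, this is exactly~\eqref{e:minimal fixed wideg increment}, resp.~\eqref{e:minimal wideg increment}. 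When it holds, $H(0)$ is a unit, so $|H(\zeta)| = |H(0)| = 1$ for all~$\zeta \in \mk$ and~$H$ has no zero in~$\mk$; any periodic point~$w_0'$ of~$f$ of minimal period~$N$ in~$\mk$, being a nonzero zero of $\hG = \bigl( \prod_j (\zeta - f^j(w_0)) \bigr) H$, then has $\prod_j (w_0' - f^j(w_0)) = 0$, i.e.\ $w_0' = f^j(w_0)$ for some~$j$. Thus the cycle of~$w_0$ is the only cycle of minimal period~$N$ of~$f$ in~$\mk$ (in~$\mk \setminus \{0\}$ when $q = 1$), and every point~$w_0'$ of it satisfies $|w_0'| = |w_0| = |\Delta|^{1/N}$.

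I expect the one genuinely non-formal point to be the integrality input for part~2 --- that $f^{q p^{n-1}}(\zeta) - \zeta$ divides $f^{q p^n}(\zeta) - \zeta$ over~$\Ok$, not merely over~$k$; the remainder is bookkeeping with Weierstrass divisions, reductions, and the norm-invariance of~$f$ on~$\mk$.
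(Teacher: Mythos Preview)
Your proof is correct and follows essentially the same route as the paper's: factor the relevant power series~$G$ by the linear polynomials attached to the orbit, then read off the bound from the lowest-order coefficient and the equality/uniqueness claims from the Weierstrass degree. Two small remarks. First, the paper begins by replacing~$k$ with a completion; you should too, since your ``Weierstrass division'' by $\zeta - f^j(w_0)$ in~$\Ok[[\zeta]]$ (equivalently, Lemma~\ref{l:elementary division}) needs $\Ok$ complete for the quotient series to land in~$\Ok[[\zeta]]$. Second, for the divisibility input in part~2 the paper simply cites Lemma~\ref{l:fixed are periodic} (which is \cite[Lemma~2.2]{LinRiv16}); your direct argument via the substitution automorphism~$\varphi$ and the telescoping identity $f^{qp^n}(\zeta)-\zeta=\sum_{j=0}^{p-1}\varphi^j(h)$ is a clean self-contained alternative, and in fact your observation that $\varphi(h)=h\cdot(1+r(\zeta,h(\zeta)))$ with the second factor a unit already gives $\varphi^j(h)=h\cdot u_j$ for units~$u_j$ by a straight induction, so one need not worry about whether $\langle h\rangle$ is closed.
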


The proof of this lemma is below, after the following lemmas.

\begin{lemm}{\cite[Lemma~$2.2$]{LinRiv16}}
\label{l:fixed are periodic}
Let~$(k, | \cdot |)$ be a complete ultrametric field and~$g(\zeta)$ a power series in~$\Ok[[\zeta]]$ such that~$|g(0)| < 1$.
Then for each integer~$m \ge 1$ the power series~$g(\zeta) - \zeta$ divides~$g^m(\zeta) - \zeta$ in~$\Ok[[\zeta]]$.
\end{lemm}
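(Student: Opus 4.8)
The plan is to reduce everything to a single principle: for power series whose constant terms lie in $\mk$, the difference of the inputs divides the difference of the outputs under substitution. Concretely, I would first isolate the following claim. \emph{If $\phi(\zeta)$ is in $\Ok[[\zeta]]$ and $a(\zeta), b(\zeta)$ are in $\Ok[[\zeta]]$ with $|a(0)| < 1$ and $|b(0)| < 1$, then $a(\zeta) - b(\zeta)$ divides $\phi(a(\zeta)) - \phi(b(\zeta))$ in $\Ok[[\zeta]]$.} Granting this, the lemma follows by a telescoping argument: writing $g^0(\zeta) = \zeta$, one has
$$ g^m(\zeta) - \zeta = \sum_{j = 0}^{m - 1}\left( g^{j + 1}(\zeta) - g^j(\zeta) \right) = \sum_{j = 0}^{m - 1}\left( g^j(g(\zeta)) - g^j(\zeta) \right). $$
Since $|g(0)| < 1$, every iterate $g^j(\zeta)$ lies in $\Ok[[\zeta]]$ and has constant term in $\mk$; applying the claim with $\phi = g^j$, $a(\zeta) = g(\zeta)$ (so $|a(0)| = |g(0)| < 1$) and $b(\zeta) = \zeta$ shows that $g(\zeta) - \zeta$ divides each summand, hence their sum.

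To prove the claim I would use the Taylor expansion with Hasse derivatives: for $\phi(\zeta) = \sum_k \phi_k \zeta^k$ set $D^n\phi(\zeta) = \sum_k \binom{k}{n}\phi_k\zeta^{k - n}$, which again lies in $\Ok[[\zeta]]$ because the binomial coefficients are integers, and one has the formal identity $\phi(X + Y) = \sum_{n \ge 0} (D^n\phi)(X)\, Y^n$ in $\Ok[[X, Y]]$ (comparing coefficients of $X^a Y^b$ on both sides gives $\binom{a+b}{b}\phi_{a+b}$). Substituting $X \mapsto b(\zeta)$ and $Y \mapsto a(\zeta) - b(\zeta)$ yields, in $\Ok[[\zeta]]$,
$$ \phi(a(\zeta)) = \sum_{n \ge 0} (D^n\phi)(b(\zeta))\,\bigl( a(\zeta) - b(\zeta) \bigr)^n, $$
and therefore
$$ \phi(a(\zeta)) - \phi(b(\zeta)) = \bigl( a(\zeta) - b(\zeta) \bigr)\cdot \sum_{n \ge 1} (D^n\phi)(b(\zeta))\,\bigl( a(\zeta) - b(\zeta) \bigr)^{n - 1}, $$
so $a(\zeta) - b(\zeta)$ divides $\phi(a(\zeta)) - \phi(b(\zeta))$ in $\Ok[[\zeta]]$.

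The delicate point, and the step I expect to require the most care, is justifying that the substitution $X \mapsto b(\zeta)$, $Y \mapsto a(\zeta) - b(\zeta)$ defines a $\Ok$-algebra homomorphism $\Ok[[X, Y]] \to \Ok[[\zeta]]$ and that the two displayed infinite sums indeed converge coefficientwise in $\Ok[[\zeta]]$. This is exactly where completeness of $(k, |\cdot|)$ and the hypothesis $|g(0)| < 1$ enter: both $b(\zeta)$ and $a(\zeta) - b(\zeta)$ have constant term of norm $\rho \= \max\{|a(0)|, |b(0)|\} < 1$, so in a product $b^i (a - b)^j$ the coefficient of a fixed power $\zeta^N$ has norm at most $\rho^{\, i + j - N}$ once $i + j > N$; hence the partial sums form a Cauchy sequence in each coefficient and converge by completeness, and a routine check shows the resulting map is multiplicative. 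In particular the tail series $\sum_{n \ge 1} (D^n\phi)(b(\zeta))\,(a(\zeta) - b(\zeta))^{n - 1}$ converges because the $(n-1)$-st power of a series with constant term in $\mk$ tends to $0$ in $\Ok[[\zeta]]$. Everything else is formal. (An alternative to the Hasse-derivative bookkeeping is an induction on $m$, reducing $g^{j+1}(\zeta) - g^j(\zeta) = g^j(g(\zeta)) - g^j(\zeta)$ to the single substitution $\zeta \mapsto g(\zeta)$; but one still needs the "difference divides difference of compositions" principle for one composition, so the analytic content is unchanged.)
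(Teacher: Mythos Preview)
The paper does not actually prove this lemma; it merely quotes it as \cite[Lemma~$2.2$]{LinRiv16} and uses it as a black box in the proof of Lemma~\ref{l:periodic bound}. So there is no ``paper's own proof'' to compare against.

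Your argument is correct. The telescoping identity
\[
g^m(\zeta)-\zeta=\sum_{j=0}^{m-1}\bigl(g^j(g(\zeta))-g^j(\zeta)\bigr)
\]
reduces the lemma to the divisibility claim, and your Hasse--Taylor expansion proves that claim cleanly: the key analytic input is exactly the one you identify, namely that for $u\in\Ok[[\zeta]]$ with $|u(0)|<1$ the coefficient of $\zeta^N$ in $u^k$ has norm at most $|u(0)|^{\,k-N}$ for $k\ge N$, so all the relevant series converge coefficientwise by completeness of~$\Ok$. Your bound $\rho^{\,i+j-N}$ for the $\zeta^N$-coefficient of $b^i(a-b)^j$ is correct (write $b=b_0+\zeta B$, $a-b=c_0+\zeta C$ and expand), and this is what makes the substitution $\Ok[[X,Y]]\to\Ok[[\zeta]]$ a well-defined ring map.

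One small simplification you might prefer over the Hasse derivatives: from $\phi(a)-\phi(b)=\sum_{k\ge 1}\phi_k\,(a^k-b^k)$ and the elementary factorisation $a^k-b^k=(a-b)\sum_{i+j=k-1}a^ib^j$ you get directly
\[
\phi(a)-\phi(b)=(a-b)\sum_{k\ge 1}\phi_k\sum_{i+j=k-1}a^ib^j,
\]
and the same estimate shows the double sum lies in $\Ok[[\zeta]]$. This avoids introducing the $D^n$ and the two-variable identity, though of course it is the same computation in disguise. Either way, your proof stands.
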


\begin{lemm}
  \label{l:elementary division}
Let~$k$ be a complete ultrametric field and let~$h(\zeta)$ be a power series in~$\Ok[[\zeta]]$.
If~$\xi$ is a zero of~$h$ in~$\mk$, then~$\zeta - \xi$ divides~$h(\zeta)$ in~$\Ok[[\zeta]]$.
\end{lemm}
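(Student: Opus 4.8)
The plan is to mimic the classical division algorithm for polynomials, but working formally in $\Ok[[\zeta]]$ and checking at the end that the quotient really has integer coefficients, which is where completeness enters. First I would write $h(\zeta) = \sum_{n \ge 0} a_n \zeta^n$ with all $a_n \in \Ok$, and note that since $\xi \in \mk$ we have $|\xi| < 1$, so every power series in $\Ok[[\zeta]]$ converges at $\xi$ and the hypothesis $h(\xi) = 0$ makes sense. The goal is to produce $g(\zeta) = \sum_{n \ge 0} b_n \zeta^n$ in $\Ok[[\zeta]]$ with $h(\zeta) = (\zeta - \xi) g(\zeta)$. Comparing coefficients, this forces the recursion $a_0 = -\xi b_0$ and $a_n = b_{n-1} - \xi b_n$ for $n \ge 1$, which we solve for the $b_n$: from the telescoping one gets the closed form
\begin{equation*}
b_n = -\xi^{-(n+1)}\sum_{j=0}^{n} a_j \xi^{j} = \xi^{-(n+1)}\sum_{j \ge n+1} a_j \xi^{j},
\end{equation*}
where the second equality uses precisely $h(\xi) = \sum_{j \ge 0} a_j \xi^j = 0$.

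Now the point is that the second expression for $b_n$ shows $b_n \in \Ok$. Indeed, the series $\sum_{j \ge n+1} a_j \xi^j$ converges in $k$ (here completeness of $k$ is used) and, by the ultrametric inequality together with $|a_j| \le 1$ and $|\xi| < 1$, its norm is at most $\max_{j \ge n+1} |\xi|^j = |\xi|^{n+1}$; multiplying by $|\xi|^{-(n+1)}$ gives $|b_n| \le 1$, so $b_n \in \Ok$. Hence $g(\zeta) := \sum_{n \ge 0} b_n \zeta^n$ is a genuine element of $\Ok[[\zeta]]$, and by construction its coefficients satisfy the recursion above, so the formal identity $(\zeta - \xi) g(\zeta) = h(\zeta)$ holds in $\Ok[[\zeta]]$. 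This is exactly the assertion that $\zeta - \xi$ divides $h(\zeta)$ in $\Ok[[\zeta]]$.

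The only genuinely delicate point — and the one I would state carefully — is the convergence and norm estimate for the tail sum $\sum_{j \ge n+1} a_j \xi^j$: one must invoke completeness of $(k, |\cdot|)$ to know the series converges, and then apply the ultrametric estimate to the \emph{tail} (not the partial sums) to get the bound $|\xi|^{n+1}$. Everything else is the routine bookkeeping of matching coefficients in a product of power series. I would also remark, for the reader's convenience, that the factor $\zeta - \xi$ is not a unit in $\Ok[[\zeta]]$ (its constant term $-\xi$ lies in $\mk$), so the division is not vacuous; but this is not needed for the statement. An alternative, slicker phrasing avoids the closed form entirely: set $g(\zeta) = \frac{h(\zeta) - h(\xi)}{\zeta - \xi} = \sum_{n \ge 1} a_n \frac{\zeta^n - \xi^n}{\zeta - \xi} = \sum_{n \ge 1} a_n(\zeta^{n-1} + \zeta^{n-2}\xi + \cdots + \xi^{n-1})$, which is manifestly a power series with coefficients that are finite $\Ok$-linear combinations of powers of $\xi \in \Ok$, hence in $\Ok[[\zeta]]$; and $(\zeta - \xi)g(\zeta) = h(\zeta) - h(\xi) = h(\zeta)$. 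I would probably present this second version, as it sidesteps the need for completeness altogether and makes the integrality transparent.
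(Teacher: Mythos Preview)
Your first approach via the explicit tail formula $b_n = \xi^{-(n+1)}\sum_{j \ge n+1} a_j \xi^{j}$ is correct and complete (modulo the trivial case $\xi = 0$, which you should dispose of separately since you divide by $\xi$).

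Your second approach, however, contains a genuine error. When you write
\[
g(\zeta) = \sum_{n \ge 1} a_n\bigl(\zeta^{n-1} + \zeta^{n-2}\xi + \cdots + \xi^{n-1}\bigr),
\]
the coefficient of $\zeta^m$ in this expression is $\sum_{n \ge m+1} a_n \xi^{n-1-m}$, which is an \emph{infinite} series, not a finite $\Ok$-linear combination of powers of $\xi$. (Your formula gives finite sums only when $h$ is a polynomial.) So integrality is not manifest, and completeness is not sidestepped: you still need each tail to converge and to bound its norm, exactly as in your first argument. Since you say you would ``probably present this second version,'' that is the version with the gap; if you go this route you must add precisely the convergence and estimate you already carried out in the first approach.

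For comparison, the paper takes a third route: set $T(\zeta) = \zeta + \xi$, observe that $h \circ T(\zeta)$ lies in $\Ok[[\zeta]]$ and vanishes at $0$, hence is divisible by $\zeta$ in $\Ok[[\zeta]]$, and then compose with $T^{-1}$ to obtain $h(\zeta)/(\zeta - \xi) \in \Ok[[\zeta]]$. This is shorter and more conceptual, but it hides the same completeness issue inside the claim that $h \circ T \in \Ok[[\zeta]]$: the coefficients of the composite are again infinite sums $\sum_{n \ge m} a_n \binom{n}{m} \xi^{n-m}$. Your first approach has the virtue of making the role of completeness fully explicit.
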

\begin{proof}
Put~$T(\zeta) = \zeta + \xi$ and note that~$h \circ T(\zeta)$ vanishes at~$\zeta = 0$ and is in~$\Ok[[\zeta]]$.
This implies that~$\zeta$ divides~$h \circ T(\zeta)$ in~$\Ok[[\zeta]]$.
Letting~$g(\zeta) \= h \circ T(\zeta) / \zeta$, it follows that the power series~$g \circ T^{-1}(\zeta) = h(\zeta) / (\zeta - \xi)$ is in~$\Ok[[\zeta]]$, as wanted.
\end{proof}

\begin{proof}[Proof of Lemma~\ref{l:periodic bound}]
Replacing~$k$ by one of its completions if necessary, assume~$k$ complete.

We use the fact that, since~$|f'(0)| = 1$, the power series~$f$ maps~$\mk$ to itself isometrically, 
see for example~\cite[\S$1.3$]{Rivthese}. 
We also use several times that when~$\wideg(f)$ is finite, 
we have:
\begin{enumerate}
\item[I.] The Weierstrass degree~$\wideg(f)$ of~$f$ equals the degree of the 
lowest degree term of $f$ whose coefficient is of norm equal to $1$;
\item[II.] The number of zeros of~$f$ in~$\mk$,
counted with multiplicity, is less than or equal to~$\wideg(f)$. 
\end{enumerate}

\partn{1}
To prove~\eqref{e:fixed bound}, let~$w_0$ 
in $\mathfrak{m}_k$  
be a periodic point of~$f$ of minimal period~$q$.
Note that every point in the forward orbit~$\orb$ of~$w_0$ under~$f$ is a zero of the power series~$(f^q(\zeta) - \zeta) / \zeta$, 
and that the coefficient of the lowest degree term of this power series is~$\delta_0(f^q)$.
On the other hand,~$\orb$ consists of~$q$ points, and, since~$f$ maps~$\mk$ to itself isometrically, 
all the points in~$\orb$ have the same norm.
Applying Lemma~\ref{l:elementary division} inductively with~$\xi$ replaced by each element of~$\orb$, 
it follows that~$\prod_{w_0' \in \orb} (\zeta - w_0')$ divides~$f^q(\zeta) - \zeta$ in~$\Ok[[\zeta]]$.
That is, the power series
\begin{equation}
\label{e:reduced fixed points equation}
\frac{f^q(\zeta) - \zeta}{\zeta} / \prod_{w_0' \in \orb} (\zeta -
w_0')
\end{equation}
is in~$\Ok[[\zeta]]$.
Note that the lowest degree term of this series is of degree~$i_0(f^q)$ and its coefficient is
\begin{equation}
\label{e:total product of fixed points}
\delta_0(f^q) / \prod_{w_0' \in \orb} (- w_0');
\end{equation}
which is therefore in~$\Ok$.
We thus have
\begin{equation}
\label{e:powered fixed lower bound}
|w_0|^q = \prod_{w_0' \in \orb} |w_0'|
\ge
\left|\delta_0(f^q)\right|,
\end{equation}
and therefore~\eqref{e:fixed bound}.
Moreover, equality holds precisely when 
the coefficient~\eqref{e:total product of fixed points} of the lowest degree 
term 
of~\eqref{e:reduced fixed points equation} has norm equal to~$1$.
Thus, in view of Fact I stated at the beginning of the proof,
it follows that equality in~\eqref{e:powered fixed lower bound}, and hence
in~\eqref{e:fixed bound}, holds precisely when the Weierstrass degree
of~\eqref{e:reduced fixed points equation} is equal to~$i_0(f^q)$.
On the other hand, the cardinality of~$\orb$ is equal to~$q$, and
therefore the Weierstrass degree of~\eqref{e:reduced fixed points
  equation} is equal to~$\wideg(f^q(\zeta) - \zeta) - q - 1$.
Combining these facts we conclude that equality in~\eqref{e:fixed
  bound} holds if and only if we have~\eqref{e:minimal fixed wideg increment}.
Finally,
using the cardinality of~$\orb$ together with Facts~I and~II above, 
when this last equality holds, the set~$\orb$ is the set of all zeros of~$(f^q(\zeta) - \zeta)/\zeta$ in~$\mk$, 
so~$\orb$ is the only cycle of minimal period~$q$ of~$f$.
This completes the proof of part~$1$.

\partn{2}
To prove~\eqref{e:periodic bound}, let~$n \ge 1$ be an integer such
that~$i_{n-1}(f^q) < + \infty$, and~$\zeta_0$ in $\mathfrak{m}_k$ a periodic point of~$f$ of minimal period~$qp^n$.
By Lemma~\ref{l:fixed are periodic} with~$g = f^{q p^{n - 1}}$ and~$m = p$, the power series~$f^{q p^{n - 1}}(\zeta) - \zeta$ 
divides~$f^{q p^n}(\zeta) - \zeta$ in~$\Ok[[\zeta]]$.
Note that every point in the forward orbit~$\orb$ of~$\zeta_0$ 
in $\mathfrak{m}_k$ 
under~$f$ is a zero of the power series
$$ h(\zeta) \= \frac{f^{q p^n}(\zeta) - \zeta}{f^{q p^{n - 1}}(\zeta) - \zeta}, $$
and that the lowest degree term of this power series is of
degree~$i_n(f^q) - i_{n - 1}(f^q)$ and its coefficient is equal to
$$\delta(h)\=\frac{\delta_{n}(f^q)}{\delta_{n-1}(f^q)}.$$
On the other hand,~$\orb$ consists of~$q p^n$ points, and, since~$f$ maps~$\mk$ to itself isometrically, 
all the points in~$\orb$ have the same norm.
Applying Lemma~\ref{l:elementary division} inductively with~$\xi$ replaced by each element of~$\orb$, 
it follows that~$\prod_{\zeta_0' \in \orb} (\zeta - \zeta_0')$ divides~$h(\zeta)$ in~$\Ok[[\zeta]]$.
In particular, the power series~$h(\zeta) / \prod_{\zeta_0' \in \orb}
(\zeta - \zeta_0')$ is in~$\Ok[[\zeta]]$, so the coefficient of its lowest degree term,
\begin{equation}
\label{e:total product of periodic points}
\left( \frac{\delta_{n}(f^q)}{\delta_{n-1}(f^q)} \right) / \prod_{\zeta_0' \in \orb} (- \zeta_0'),
\end{equation}
is in~$\Ok$.
We thus have
\begin{equation}
\label{e:powered lower bound}
|\zeta_0|^{q p^n} = \prod_{\zeta_0' \in \orb} |\zeta_0'|
\ge
\left| \frac{\delta_{n}(f^q)}{\delta_{n-1}(f^q)} \right|,
\end{equation}
and therefore~\eqref{e:periodic bound}.
Note that equality holds if and only if the lowest degree coefficient~\eqref{e:total product of periodic points} 
of the power series~$h(\zeta) / \prod_{\zeta_0' \in \orb} (\zeta - \zeta_0')$ has norm equal to~$1$.
In view of Fact~I stated at the beginning of the proof, we conclude
that equality in~\eqref{e:powered lower bound}, and hence
in~\eqref{e:periodic bound}, holds if and only if
$$ \wideg \left(h(\zeta) / \prod_{\zeta_0' \in \orb} (\zeta - \zeta_0')
\right)
=
i_n(f^q)-i_{n-1}(f^q). $$
On the other hand, the cardinality of~$\orb$ is equal to~$qp^n$, and
therefore we have
$$ \wideg \left(h(\zeta) / \prod_{\zeta_0' \in \orb} (\zeta - \zeta_0') \right)
=
\wideg \left( \frac{f^{q p^n}(\zeta) - \zeta}{f^{q p^{n - 1}}(\zeta) -
    \zeta} \right) - q p^n. $$
Combining these facts,
we conclude that equality holds in~\eqref{e:periodic bound} if and only~\eqref{e:minimal wideg increment}.
Finally,
using the cardinality of~$\orb$ together with Facts~I and~II, 
when this last equality holds~$\orb$ is the set of all zeros of~$\frac{f^{q p^n}(\zeta) - \zeta}{f^{q p^{n - 1}}(\zeta) - \zeta}$ in~$\mk$, 
so~$\orb$ is the only cycle of minimal period~$qp^n$ of~$f$.
This completes the proof of part~$2$, and of the lemma.
\end{proof}

\section{A reduction}
\label{s:a reduction}
In this section we prove Theorems~\ref{t:genericity},
\ref{t:isolated}, and~\ref{t:characterization of minimally ramified} assuming the following result, which is proved 
in \S\ref{s:proof of the main lemma}.
Note that for an odd integer~$q$, each of the numbers~$\frac{q - 1}{2}$,
$\frac{q + 1}{2}$, and~$\frac{q^2 - 1}{2}$ is an integer, and
therefore defines an element on each field of
characteristic~$2$. 

\begin{generic}[Main Lemma]
 Let~$p$ be a prime number, $k$~a field of characteristic~$p$, 
 and~$q\geq 1$ an integer that is not divisible by~$p$.
Given~$\gamma$ in~$k$ such that~$\gamma^q=1$, let~$g(\zeta)$ be a power series in~$k[[\zeta]]$ of the form
\begin{equation}
\label{e:reduced form reloaded}
g(\zeta)
=
\gamma\zeta \left(1 + \sum_{j=1}^{+\infty}a_j\zeta^{jq}\right).
\end{equation}
Then we have
\begin{equation}
\label{e:generic form g after q iterates}
g^q(\zeta)
\\ \equiv
\zeta \left( 1 + qa_1 \zeta^q +
  q\left(\left(\frac{q^2 - 1}{2} \right)a_1^2+ a_2\right) \zeta^{2q}
\right)
\mod \left\langle \zeta^{3q + 1} \right\rangle.
\end{equation}
Moreover, if for a given integer~$n \ge 1$ we put
\begin{align*}
\chi_{q, n}
& \=
\begin{cases}
q a_1^{p^n-\frac{p^n-1}{p-1}}
\left(\frac{q+1}{2}a_1^2-a_2\right)^\frac{p^n-1}{p-1}
& \text{if~$p$ is odd;}
\\
a_1 \left(\frac{q+1}{2}a_1^2-a_2\right)
& \text{if~$p =2$ and~$n = 1$;}
\\
a_1 \left(\frac{q - 1}{2} a_1^2 - a_2 \right)^{2^{n-1}-1}
\left(\frac{q+1}{2}a_1^2-a_2\right)^{2^{n-1}}
& \text{if~$p =2$ and~$n \ge 2$,}
\end{cases}
\\ \intertext{and}
\xi_{q, n}
& \=
\begin{cases}
- q a_1^{p^n-\frac{p^n-1}{p-1}-1}
\left(\frac{q+1}{2}a_1^2-a_2\right)^{\frac{p^n-1}{p-1}+1}
& \text{if~$p$ is odd;}
\\
a_2^{2^{n-1}} \left(a_1^2-a_2\right)^{2^{n-1}}
& \text{if~$p = 2$,}
\end{cases}
\end{align*}
then we have
\begin{multline}
\label{e:generic form g iterates}
g^{qp^n}(\zeta)
\equiv
\zeta \left( 1 + \chi_{q, n}\zeta^{q\frac{p^{n+1}-1}{p-1}} + \xi_{q, n}
  \zeta^{q\frac{p^{n+1}-1}{p-1}+q} \right)
\\
\mod \left\langle \zeta^{q\frac{p^{n+1}-1}{p-1} + 2q + 1} \right\rangle.
\end{multline}
\end{generic}

The proof of the Main Lemma is given in~\S\ref{s:proof of the main lemma}.
We now proceed to state some corollaries of this result that are used
in the proofs of Theorems~\ref{t:genericity}, \ref{t:isolated} and~\ref{t:characterization of minimally ramified}, which are given in~\S\ref{ss:proof of genericity and isolated}.

\begin{coro}
\label{c:minimally ramified q series}
Let~$p$ be a prime number and $k$~a field of characteristic~$p$. 
Moreover, let~$\gamma$ be a root of unity in~$k$, denote by~$q$ its order, 
and let~$g(\zeta)$ be a power series in~$k[[\zeta]]$ of the form~\eqref{e:reduced form reloaded}.
If~$p$ is odd (resp.~$p = 2$), then~$g$ is minimally ramified if and only if
$$ a_1 \neq 0
\text{ and }
a_2 \neq \frac{q + 1}{2} a_1^2
\quad
\left( \text{resp. }
a_1 \neq 0,
a_2 \neq 0,
\text{ and }
a_2 \neq a_1^2 \right). $$
\end{coro}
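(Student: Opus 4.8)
The plan is to read off the minimal-ramification condition directly from the formulas for the iterates provided by the Main Lemma. Recall that $g$ is minimally ramified precisely when $i_n(g) = q\frac{p^{n+1}-1}{p-1}$ for every $n \ge 0$, equivalently, when $\delta_n(g^q) \ne 0$ — here I should be slightly careful with notation, since for a series of the form \eqref{e:reduced form reloaded} we have $g^q(0)=0$ and $(g^q)'(0)=1$, so $i_n(g^q)$ and $\delta_n(g^q)$ make sense, and by the strict monotonicity recalled after Definition~\ref{d:ramification number} it suffices to check that each $i_n(g^q)$ attains its lower bound. First I would dispose of the case $n = 0$: from \eqref{e:generic form g after q iterates}, the coefficient of $\zeta^{q+1}$ in $g^q(\zeta)-\zeta$ is $qa_1$, which is nonzero if and only if $a_1 \ne 0$ (using that $p \nmid q$). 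Thus $i_0(g^q) = q$ exactly when $a_1 \ne 0$, and in that case $\delta_0(g^q) = qa_1$.

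Assuming $a_1 \ne 0$, I would then turn to $n \ge 1$ and use the second half of the Main Lemma. The formula \eqref{e:generic form g iterates} tells us that the coefficient of $\zeta^{q\frac{p^{n+1}-1}{p-1}+1}$ in $g^{qp^n}(\zeta)-\zeta$ is $\chi_{q,n}$. Hence $i_n(g^q)$ attains its minimal value $q\frac{p^{n+1}-1}{p-1}$ if and only if $\chi_{q,n} \ne 0$. So the remaining task is purely to determine, for each $n \ge 1$, exactly when $\chi_{q,n} \ne 0$, given that $a_1 \ne 0$. Inspecting the three cases: for $p$ odd, $\chi_{q,n} = qa_1^{p^n - \frac{p^n-1}{p-1}}\left(\frac{q+1}{2}a_1^2 - a_2\right)^{\frac{p^n-1}{p-1}}$, which (since $q, a_1 \ne 0$) vanishes exactly when $\frac{q+1}{2}a_1^2 - a_2 = 0$, i.e. $a_2 = \frac{q+1}{2}a_1^2$, and this condition is independent of $n$. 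For $p = 2$, $n = 1$: $\chi_{q,1} = a_1\left(\frac{q+1}{2}a_1^2 - a_2\right)$, vanishing iff $a_2 = \frac{q+1}{2}a_1^2$; and in characteristic $2$ with $q$ odd, $\frac{q+1}{2}a_1^2 = \frac{q-1}{2}a_1^2 + a_1^2$, and one checks $\frac{q+1}{2} \equiv \frac{q-1}{2} + 1$, but more to the point $\frac{q+1}{2}a_1^2 = a_1^2 - \frac{q-1}{2}a_1^2$... I would simply note that in characteristic $2$, $\frac{q+1}{2} + \frac{q-1}{2} = q \equiv 1$, so $\frac{q+1}{2}a_1^2 - a_2 = 0$ is equivalent to $a_2 = \frac{q+1}{2}a_1^2$, and I will reconcile this with the stated condition $a_2 \ne a_1^2$ below. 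For $p = 2$, $n \ge 2$: $\chi_{q,n} = a_1\left(\frac{q-1}{2}a_1^2 - a_2\right)^{2^{n-1}-1}\left(\frac{q+1}{2}a_1^2 - a_2\right)^{2^{n-1}}$, which vanishes iff $a_2 = \frac{q-1}{2}a_1^2$ or $a_2 = \frac{q+1}{2}a_1^2$.

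Putting these together in the $p = 2$ case: $g$ is minimally ramified iff $a_1 \ne 0$ and $\chi_{q,n} \ne 0$ for all $n \ge 1$, i.e. iff $a_1 \ne 0$, $a_2 \ne \frac{q+1}{2}a_1^2$ (from $n = 1$, which also covers $n \ge 2$'s second factor), and $a_2 \ne \frac{q-1}{2}a_1^2$ (from $n \ge 2$). To match the statement, I use that in characteristic $2$ with $q$ odd one has $\frac{q+1}{2}a_1^2 = a_1^2 + \frac{q-1}{2}a_1^2$ and, crucially, $\frac{q+1}{2} + \frac{q-1}{2} = q \equiv 1 \pmod 2$, so $\{\frac{q-1}{2}a_1^2,\ \frac{q+1}{2}a_1^2\} = \{\frac{q-1}{2}a_1^2,\ a_1^2 - \tfrac{q-1}{2}a_1^2\}$; hmm, rather than belabor this I would simply verify the identity $\frac{q+1}{2} \equiv 1 \pmod 2$ when... no: the clean statement is that over a field of characteristic $2$, for $q$ odd, we have $\tfrac{q-1}{2} + \tfrac{q+1}{2} = q = 1$ in the prime field, hence the two forbidden values of $a_2$ are $\tfrac{q-1}{2}a_1^2$ and $(1 - \tfrac{q-1}{2})a_1^2$; and separately one computes $\tfrac{q+1}{2} = \tfrac{q-1}{2} + 1$, so actually $\tfrac{q+1}{2}a_1^2 = \tfrac{q-1}{2}a_1^2 + a_1^2$. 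Therefore the two conditions $a_2 \ne \tfrac{q+1}{2}a_1^2$ and $a_2 \ne \tfrac{q-1}{2}a_1^2$ together are equivalent to $a_2 \ne \tfrac{q-1}{2}a_1^2$ and $a_2 - \tfrac{q-1}{2}a_1^2 \ne a_1^2$; setting $b \= a_2 - \tfrac{q-1}{2}a_1^2$, this says $b \ne 0$ and $b \ne a_1^2$ — and since the coefficient $a_2$ of $\zeta^{2q}$ can be shifted by any multiple of $a_1^2$ via a conjugacy preserving the form \eqref{e:reduced form reloaded}... actually I would not go that route. Instead I simply record: when $q \equiv 1 \pmod 4$ we get $\tfrac{q-1}{2}$ even so $\tfrac{q-1}{2}a_1^2 = 0$ and $\tfrac{q+1}{2}a_1^2 = a_1^2$, giving exactly $a_2 \ne 0$ and $a_2 \ne a_1^2$; when $q \equiv 3 \pmod 4$ we get $\tfrac{q-1}{2}$ odd, $\tfrac{q-1}{2}a_1^2 = a_1^2$, $\tfrac{q+1}{2}a_1^2 = 0$... wait that gives $a_2 \ne a_1^2$ and $a_2 \ne 0$ again — consistent. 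So in all cases the $p=2$ condition is $a_1 \ne 0$, $a_2 \ne 0$, $a_2 \ne a_1^2$, matching the statement (and when $q = 1$, $\tfrac{q-1}{2} = 0$ and $\tfrac{q+1}{2} = 1$ directly).

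The main obstacle is not any of the case analyses, which are elementary once the Main Lemma is in hand, but rather the bookkeeping to confirm that checking $\chi_{q,1} \ne 0$ and $\chi_{q,n} \ne 0$ for a single $n \ge 2$ suffices to conclude $\chi_{q,n} \ne 0$ for all $n$ — this follows because, for $p$ odd, the vanishing locus of $\chi_{q,n}$ is the same hypersurface $\{a_2 = \tfrac{q+1}{2}a_1^2\}$ for every $n$, and for $p = 2$ the vanishing locus of $\chi_{q,n}$ for $n \ge 2$ is the union of the two hyperplanes $\{a_2 = \tfrac{q\pm1}{2}a_1^2\}$, independent of $n \ge 2$, while $\chi_{q,1}$ handles one of them. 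The only genuinely delicate point is the reconciliation of $\{\tfrac{q-1}{2}a_1^2, \tfrac{q+1}{2}a_1^2\}$ with $\{0, a_1^2\}$ in characteristic $2$, which I would handle cleanly by splitting on $q \bmod 4$ as indicated above, noting $p \nmid q$ forces $q$ odd. I would also, at the start, invoke Lemma~\ref{l:admissible periods} (or the remark after Definition~\ref{d:ramification number}) to justify that it is enough to verify equality in \eqref{e:least ramification} one $n$ at a time.
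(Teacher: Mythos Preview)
Your proposal is correct and follows essentially the same route as the paper's proof: read off from the Main Lemma that $i_0(g^q)=q$ iff $a_1\neq 0$, and that $i_n(g^q)=q\frac{p^{n+1}-1}{p-1}$ iff $\chi_{q,n}\neq 0$, then analyze the vanishing of $\chi_{q,n}$. Your $q\bmod 4$ split in characteristic~$2$ is exactly the paper's observation that in~$k$ either $\tfrac{q-1}{2}=0,\ \tfrac{q+1}{2}=1$ or $\tfrac{q-1}{2}=1,\ \tfrac{q+1}{2}=0$; the only cleanup needed is to tighten the exposition (and note that your closing appeal to Lemma~\ref{l:admissible periods} is not what you want---you simply use that the vanishing conditions on $\chi_{q,n}$ are independent of~$n$).
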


\begin{proof}
When~$p$ is odd the corollary is a direct consequence of~\eqref{e:generic form g after q iterates} and~\eqref{e:generic form g iterates} 
in the Main Lemma.
In the case $p=2$ the integer~$q$ is odd, so we have~$q = 1$ in~$k$, and therefore by~\eqref{e:generic form g after q iterates} we have~$g^q(\zeta) \equiv \zeta (1 + a_1 \zeta^q) \mod \langle \zeta^{2q + 1} \rangle$.
Moreover, in~$k$ we have either
$$ \frac{q-1}{2} = 0
\text{ and }
\frac{q+1}{2} = 1,
\text{ or }
\frac{q-1}{2} = 1
\text{ and }
\frac{q+1}{2} = 0, $$
so either~$\chi_{q, 1} = a_1(a_1^2 - a_2)$ or~$\chi_{q, 1} = -a_1a_2$, and for each integer~$n \ge 2$ we have either
$$ \chi_{q, n} = - a_1 a_2^{2^{n - 1} - 1}(a_1^2 - a_2)^{2^{n - 1}}
\text{ or }
\chi_{q, n} = a_1a_2^{2^{n - 1}}(a_1^2 - a_2)^{2^{n - 1} - 1}. $$
In view of~\eqref{e:generic form g iterates}, we conclude that~$g$ is minimally ramified if and only if~$a_1 \neq 0$, $a_2 \neq 0$, and~$a_2 \neq a_1^2$.
This completes the proof of the corollary.
\end{proof}

\begin{coro}
\label{c:generic periodic points lower bound g q}
Let~$p$ be a prime number and $(k, | \cdot |)$ an ultrametric field of
characteristic~$p$.
Moreover, let~$\gamma$ be a root of unity in~$k$, denote by~$q$ its order, and let~$g(\zeta)$ be a power series
in~$\Ok[[\zeta]]$ of the form~\eqref{e:reduced form reloaded}.
Then for every integer~$n\geq 1$ and every periodic point~$\zeta_0$ 
in $\mathfrak{m}_k$ 
 of~$g$ of minimal period~$qp^n$, we have
\begin{equation}
  \label{e:generic periodic points lower bound g q}
|\zeta_0|
\ge
\begin{cases}
\left |a_1 \right|^{\frac{1}{q}-\frac{2}{qp}}
\cdot \left|\frac{q+1}{2}a_1^2-a_2 \right|^{\frac{1}{qp}}
& \text{if~$p$ is odd;}
\\
\left|\frac{q+1}{2}a_1^2-a_2 \right|^{\frac{1}{2q}}
& \text{if~$p = 2$ and~$n = 1$;}
\\
\left|a_2\left(a_1^2-a_2\right)\right|^{\frac{1}{4q}}
& \text{if~$p = 2$ and~$n \ge 2$.}
\end{cases}
\end{equation}
Moreover, if~$w_0$ is a periodic point of~$g$ in~$\mk \setminus \{ 0
\}$ of minimal period~$q$, then~$|w_0|\geq |a_1|^{\frac{1}{q}}$.
If in addition~$a_1=0$, then we 
also
have the bound~$|w_0|\geq |a_2|^{\frac{1}{q}}$.
\end{coro}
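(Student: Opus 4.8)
\noindent\emph{Plan for the proof of Corollary~\ref{c:generic periodic points lower bound g q}.}
The plan is to feed the explicit expansions of the iterates of~$g$ provided by the Main Lemma into the Periodic Points Lower Bound for parabolic series (Lemma~\ref{l:periodic bound}). As in the proof of that lemma I would first replace~$k$ by one of its completions and assume~$(k,|\cdot|)$ complete, and I would use throughout that~$q$, being an integer not divisible by the characteristic~$p$ of~$k$, is a unit of~$\Ok$, so that~$|q|=1$. For the last two assertions of the corollary, note that by~\eqref{e:generic form g after q iterates} the coefficients of~$\zeta^{q+1}$ and~$\zeta^{2q+1}$ in~$g^q(\zeta)-\zeta$ are~$qa_1$ and~$q\bigl(\tfrac{q^2-1}{2}a_1^2+a_2\bigr)$; hence for a periodic point~$w_0$ in~$\mk\setminus\{0\}$ of minimal period~$q$, Lemma~\ref{l:periodic bound}(1) gives~$|w_0|\ge|\delta_0(g^q)|^{1/q}$, which equals~$|a_1|^{1/q}$ when~$a_1\neq0$ (then~$i_0(g^q)=q$ and~$\delta_0(g^q)=qa_1$) and equals~$|a_2|^{1/q}$ when~$a_1=0$ and~$a_2\neq0$ (then~$i_0(g^q)=2q$ and~$\delta_0(g^q)=qa_2$), the bound~$|w_0|\ge|a_2|^{1/q}$ being trivial when~$a_1=a_2=0$.

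Now fix~$n\ge1$ and a periodic point~$\zeta_0$ in~$\mk$ of minimal period~$qp^n$, and suppose first~$a_1\neq0$. If the right-hand side of~\eqref{e:generic periodic points lower bound g q} is~$0$ there is nothing to prove, so assume it is positive; together with~$a_1\neq0$ this forces~$\chi_{q,m}\neq0$ for every~$m\ge1$ (when~$p=2$ one uses that~$q$ is odd, so~$\tfrac{q-1}{2}$ and~$\tfrac{q+1}{2}$ are~$0$ and~$1$ in~$k$ in some order, whence the two factors of~$\chi_{q,m}$ are~$a_2$ and~$a_1^2-a_2$, both nonzero). By the Main Lemma this means~$i_m(g^q)=q\tfrac{p^{m+1}-1}{p-1}$ and~$\delta_m(g^q)=\chi_{q,m}$ for every~$m\ge1$, while~$\delta_0(g^q)=qa_1$; in particular~$i_n(g^q)<+\infty$, so Lemma~\ref{l:periodic bound}(2) applies and the claim reduces to evaluating~$\bigl|\chi_{q,1}/(qa_1)\bigr|^{1/(qp)}$ (for~$n=1$) and~$\bigl|\chi_{q,n}/\chi_{q,n-1}\bigr|^{1/(qp^n)}$ (for~$n\ge2$). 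These are routine: cancelling the common powers of~$a_1$ and of~$\tfrac{q+1}{2}a_1^2-a_2$ — and, when~$p=2$, using~$(\tfrac{q-1}{2}a_1^2-a_2)(\tfrac{q+1}{2}a_1^2-a_2)=a_2(a_1^2-a_2)$ — produces exactly the three right-hand sides in~\eqref{e:generic periodic points lower bound g q}.

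It remains to treat~$a_1=0$, which is the delicate case. When~$p$ is odd the exponent~$\tfrac1q-\tfrac{2}{qp}=\tfrac{p-2}{qp}$ is positive, so the right-hand side of~\eqref{e:generic periodic points lower bound g q} vanishes; the same holds for~$p=2$ if also~$a_2=0$. So suppose~$p=2$ and~$a_2\neq0$: the point is that now the leading coefficient~$\chi_{q,n}$ vanishes while the asserted bound does not, so one must descend to the \emph{second} significant term~$\xi_{q,n}$. By~\eqref{e:generic form g after q iterates} we have~$g^q(\zeta)\equiv\zeta(1+a_2\zeta^{2q})\bmod\langle\zeta^{3q+1}\rangle$, so~$i_0(g^q)=2q$ and~$\delta_0(g^q)=a_2$, and the Main Lemma gives~$\chi_{q,n}=0$ but~$\xi_{q,n}=a_2^{2^n}\neq0$, whence~$i_n(g^q)=q\tfrac{p^{n+1}-1}{p-1}+q<+\infty$ and~$\delta_n(g^q)=a_2^{2^n}$ for all~$n\ge1$. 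Then Lemma~\ref{l:periodic bound}(2) yields~$|\zeta_0|\ge\bigl|a_2^{2^n}/a_2^{2^{n-1}}\bigr|^{1/(q2^n)}=|a_2|^{1/(2q)}$ (with the convention~$a_2^{2^0}=\delta_0(g^q)=a_2$ when~$n=1$), and since~$a_1=0$ this coincides with~$\bigl|\tfrac{q+1}{2}a_1^2-a_2\bigr|^{1/(2q)}$ when~$n=1$ and with~$\bigl|a_2(a_1^2-a_2)\bigr|^{1/(4q)}$ when~$n\ge2$. I expect this last degeneration — where in characteristic~$2$ the leading datum~$\chi_{q,n}$ is useless — to be the main obstacle, together with the bookkeeping needed to confirm that whenever the claimed bound is positive the iterate~$g^{qp^n}$ is not the identity, so that part~(2) of Lemma~\ref{l:periodic bound} genuinely applies (in the remaining cases the bound is~$0$ and nothing is asserted).
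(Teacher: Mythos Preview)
Your proof is correct and follows essentially the same approach as the paper: both feed the expansions of the Main Lemma into Lemma~\ref{l:periodic bound}, and both fall back on the secondary coefficient~$\xi_{q,n}$ when~$\chi_{q,n}$ vanishes in characteristic~$2$. Your case split (first by whether~$a_1=0$, then by whether the claimed bound is positive) differs only cosmetically from the paper's (first by whether~$\chi_{q,n}=0$); in particular your explicit treatment of the degeneration~$p=2$, $a_1=0$, $a_2\neq0$ via~$\xi_{q,n}=a_2^{2^n}$ is exactly the mechanism the paper uses, and the identity~$(\tfrac{q-1}{2}a_1^2-a_2)(\tfrac{q+1}{2}a_1^2-a_2)=a_2(a_1^2-a_2)$ you invoke is the same one underlying the paper's computation of~$\lvert\chi_{q,n}/\chi_{q,n-1}\rvert$.
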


\begin{proof}
We use several times the fact that by definition~$p$ does not divide~$q$ and so~$|q|=1$. 

We first consider the case~$n=0$.
If~$a_1\neq 0$, then by~\eqref{e:generic form g after q iterates} in the Main Lemma we have~$\delta_0(g^q) = qa_1$. 
Together with~\eqref{e:fixed bound} in Lemma~\ref{l:periodic bound},
this implies~$|w_0|\geq |a_1|^{\frac{1}{q}}$.
Suppose~$a_1 = 0$.
Then we have~$|w_0|\geq |a_1|^{\frac{1}{q}}$ trivially.
If~$a_2 = 0$, then the inequality~$|w_0| \ge |a_2|^{\frac{1}{q}}$ also
holds trivially.
If~$a_2\neq 0$, then by~\eqref{e:generic form g after q iterates} in the Main Lemma we have~$\delta_0(g^q)=qa_2$. 
Then by~\eqref{e:fixed bound} in Lemma~\ref{l:periodic bound} we obtain the non-trivial bound~$|w_0|\geq |a_2|^{\frac{1}{q}}$.

We now consider the case where~$n\geq 1$ and~$\chi_{q, n}\neq 0$.
By the Main Lemma we have~$\delta_n(g^q) = \chi_{q, n}$.
If in addition~$n \ge 2$, then our assumption~$\chi_{q, n} \neq 0$
implies~$\chi_{q, n - 1} \neq 0$, and by the Main Lemma we have~$\delta_{n - 1}(g^q) = \chi_{q, n - 1}$.
Combined with~\eqref{e:periodic bound} in Lemma~\ref{l:periodic
  bound}, this implies
\begin{equation*}
  |\zeta_0|
\geq
\left|\frac{\chi_{q, n}}{\chi_{q,n-1}}\right|^{\frac{1}{qp^n}}
=
\begin{cases}
\left |a_1 \right|^{\frac{1}{q}-\frac{2}{qp}}
\cdot \left|\frac{q+1}{2}a_1^2-a_2 \right|^{\frac{1}{qp}}
& \text{if~$p$ is odd;}
\\
\left|a_2\left(a_1^2-a_2\right)\right|^{\frac{1}{4q}}
& \text{if~$p = 2$.}
\end{cases}
\end{equation*}
Suppose~$n = 1$.
Our assumption~$\chi_{q, 1} \neq 0$ implies~$a_1 \neq 0$, and by~\eqref{e:generic form g after q iterates} in the Main
Lemma we have~$\delta_0(g^q) = qa_1$.
Then by~\eqref{e:periodic bound} in Lemma~\ref{l:periodic bound}, we have
\begin{equation*}
|\zeta_0|
\ge
\left | \frac{\chi_{q, 1}}{q a_1}\right|^{\frac{1}{qp}}
=
\begin{cases}
\left |a_1 \right|^{\frac{1}{q}-\frac{2}{qp}}
\cdot \left|\frac{q+1}{2}a_1^2-a_2 \right|^{\frac{1}{qp}}
& \text{if~$p$ is odd;}
\\
\left|\frac{q+1}{2}a_1^2-a_2 \right|^{\frac{1}{2q}}
& \text{if~$p = 2$.}
\end{cases}
\end{equation*}
 
It remains to consider the case where~$n\geq 1$ and~$\chi_{q, n}=0$.
If~$p$ is odd, then~$\chi_{q, n} = 0$ implies~$a_1
= 0$ or~$\frac{q+1}{2} a_1^2 - a_2 = 0$.
In both cases the inequality~\eqref{e:generic periodic points lower bound g q} holds trivially.
Suppose~$p=2$ and~$n = 1$.
Then our assumption~$\chi_{q, 1} = 0$ implies~$a_1 = 0$ or~$\frac{q +
  1}{2} a_1^2 - a_2 = 0$.
In the latter case the inequality~\eqref{e:generic periodic points lower bound g q} holds
trivially, so we assume~$a_1 = 0$ and~$\frac{q + 1}{2} a_1^2 - a_2
\neq 0$, and therefore~$a_2 \neq 0$.
Then 
by~\ref{e:generic form g after q iterates} and~\eqref{e:generic form g
  iterates} in
the Main Lemma we
respectively
 have
$$ \delta_0(g^q) = q a_2
\text{ and }
\delta_1(g^q) = \xi_{q, 1} = - a_2^2. $$
So, by~\eqref{e:periodic bound} in Lemma~\ref{l:periodic
  bound} we have~$|\zeta_0| \ge \left| a_2 \right|^{\frac{1}{2q}}$,
which is~\eqref{e:generic periodic points lower bound g q} in this particular case.
It remains to consider the case where~$p = 2$ and~$n \ge 2$.
Note that our assumption~$\chi_{q, n} = 0$ implies that~$\chi_{q, n -
  1} = 0$.
If either~$a_1 = 0$ or~$a_2 = a_1^2$, then the inequality~\eqref{e:generic periodic
  points lower bound g q} holds trivially, so we assume~$a_1 \neq 0$
and~$a_2 \neq a_1^2$.
Then~$\xi_{q, n} \neq 0$ and~$\xi_{q, n - 1} \neq 0$.
Since~$\chi_{q, n} = \chi_{q, n - 1} = 0$, by the Main Lemma we have
$$ \delta_n(g^q)=\xi_{q, n}
\text{ and }
\delta_{n - 1}(g^q) = \xi_{q, n - 1}. $$
Together with~\eqref{e:periodic bound} in Lemma~\ref{l:periodic
  bound}, this implies
\begin{equation*}
|\zeta_0|
\geq
\left | \frac{\xi_{q, n}}{\xi_{q, n - 1}} \right|^{\frac{1}{2^n q}}
=
\left| a_2 \left(a_1^2 - a_2 \right) \right|^{\frac{1}{4q}}.
\end{equation*}
This completes the proof of the corollary.
\end{proof}

\subsection{Proof of Theorems~\ref{t:genericity}, \ref{t:isolated}, and~\ref{t:characterization of minimally ramified} assuming the Main Lemma}
\label{ss:proof of genericity and isolated}
The proofs are given after the following lemma, which is an enhanced version of~\cite[Proposition~$4.1$]{LinRiv16}.

\begin{lemm}
\label{l:basic normal form reloaded}
Let~$p$ be a prime number, let~$k$ a field of characteristic~$p$, and
denote by~$F$ the prime field of~$k$.
Fix a root of unity~$\gamma$ in~$k$ different from~$1$, and denote
by~$q \ge 2$ its order.
Then for every integer~$\ell \ge 1$ there are polynomials
$$ \alpha_\ell(x_1, \ldots, x_{\ell})
\text{ and }
\beta_\ell(x_1, \ldots, x_{\ell q}) $$
with coefficients in~$F(\gamma)$, such that the following property
holds.
For every power series~$f(\zeta)$ in~$k[[\zeta]]$ of the form
$$ f(\zeta)
=
\gamma \zeta \left( 1 + \sum_{i = 1}^{+ \infty} c_i \zeta^i \right), $$
the power series
$$ h(\zeta)
\=
\zeta \left( 1 + \sum_{\ell = 1}^{+ \infty} \alpha_{\ell} (c_1,
  \ldots, c_{\ell}) \zeta^{\ell} \right) $$
in~$k[[\zeta]]$ is such that
$$ h \circ f \circ h^{-1}(\zeta)
=
\gamma \zeta \left( 1 + \sum_{j = 1}^{+ \infty} \beta_j (c_1,
  \ldots, c_{j q}) \zeta^{j q} \right). $$
\end{lemm}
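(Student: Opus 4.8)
The plan is to construct~$h$ degree by degree, killing all the ``non-resonant'' coefficients (those of~$\zeta^i$ with~$q \nmid i$) one at a time, and to keep track along the way that each coefficient we introduce is a polynomial in~$c_1, \ldots, c_i$ with coefficients in~$F(\gamma)$. First I would observe the basic conjugation computation: if~$h(\zeta) = \zeta(1 + \alpha \zeta^{\ell} + \cdots)$ differs from the identity only at order~$\ell$, and~$f(\zeta) = \gamma\zeta(1 + \sum_{i\geq 1} c_i \zeta^i)$, then the coefficient of~$\zeta^{\ell+1}$ in~$h \circ f \circ h^{-1}$ equals~$c_{\ell} + (\gamma^{\ell} - 1)\alpha$, while coefficients of lower-order terms are unchanged. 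Since~$\gamma$ has order~$q\geq 2$, the factor~$\gamma^{\ell} - 1$ is nonzero precisely when~$q \nmid \ell$, and in that case it is a unit in~$F(\gamma)$. So for each~$\ell$ with~$q \nmid \ell$ we may choose~$\alpha$ to cancel the~$\zeta^{\ell+1}$-coefficient, whereas for~$q \mid \ell$ we take~$\alpha = 0$ and leave that coefficient alone. This is the positive-characteristic analog of the classical normal-form reduction, and the only arithmetic input is that~$\gamma^{\ell} = 1$ iff~$q \mid \ell$.

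Next I would set up the induction properly. Define~$h$ as the (formal) infinite composition, or equivalently build~$h$ inductively: having defined~$\alpha_1, \ldots, \alpha_{\ell-1}$ so that the conjugate~$f_{\ell-1} \= h_{\ell-1} \circ f \circ h_{\ell-1}^{-1}$ agrees with a series of the form~\eqref{e:reduced form} up to order~$\ell$, I would compute the coefficient~$c^{(\ell-1)}_{\ell}$ of~$\zeta^{\ell+1}$ in~$f_{\ell-1}$ and, if~$q \nmid \ell$, set~$\alpha_{\ell} \= c^{(\ell-1)}_{\ell}/(1 - \gamma^{\ell})$, otherwise~$\alpha_{\ell} \= 0$; then conjugate again by~$\zeta(1 + \alpha_{\ell}\zeta^{\ell})$ to get~$f_{\ell}$. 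The crucial bookkeeping claim, proved by the same induction, is that~$c^{(\ell-1)}_{\ell}$ is a polynomial in~$c_1, \ldots, c_{\ell}$ with coefficients in~$F(\gamma)$: this follows because forming the composition~$h_{\ell-1} \circ f \circ h_{\ell-1}^{-1}$ and extracting the coefficient of~$\zeta^{\ell+1}$ involves only finitely many additions and multiplications of the~$c_i$ (for~$i \leq \ell$) and the previously constructed~$\alpha_j$ (for~$j \leq \ell-1$, themselves such polynomials by the inductive hypothesis), together with division by the units~$1 - \gamma^j$ which lie in~$F(\gamma)$. Hence~$\alpha_{\ell}$ is such a polynomial, call it~$\alpha_{\ell}(x_1, \ldots, x_{\ell})$, and the surviving coefficient of~$\zeta^{jq+1}$ in the final~$h \circ f \circ h^{-1}$ is likewise a polynomial~$\beta_j(x_1, \ldots, x_{jq})$ over~$F(\gamma)$, because it is determined by the data up to order~$jq$.

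Finally I would note the convergence/well-definedness points: each~$\alpha_{\ell}$ depends only on~$c_1, \ldots, c_{\ell}$, so the stabilized coefficients of~$h$ and of the conjugate are well-defined formal power series (no infinite sums of a fixed coefficient occur), and the composition~$h \circ f \circ h^{-1}$ makes sense since~$h$ is tangent to the identity, hence invertible in~$k[[\zeta]]$. I would also remark that the argument is purely formal and so applies verbatim over the prime field~$F$ adjoined~$\gamma$; specializing the universal polynomials~$\alpha_{\ell}, \beta_j$ at the actual coefficients of~$f$ gives the statement. The main obstacle is not any single step but the careful tracking of coefficients: one must argue once and for all that at each stage only finitely much data enters and only the units~$1 - \gamma^j$ ($1 \leq j < q$, and their analogs mod~$q$) are inverted, so that everything stays in the polynomial ring over~$F(\gamma)$; the cleanest way is to phrase the whole construction universally over the polynomial ring~$F(\gamma)[c_1, c_2, \ldots]$ from the outset and read off the result by specialization.
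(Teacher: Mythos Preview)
Your plan is correct and matches the paper's proof essentially step for step: the paper works universally over~$F(\gamma)[x_1,x_2,\ldots]$ from the outset (exactly as you suggest at the end), builds~$h$ inductively by composing with~$\zeta(1+B\zeta^\ell)$ where~$B=-(\gamma^\ell-1)^{-1}A$ whenever~$q\nmid\ell$ and leaving~$h$ unchanged when~$q\mid\ell$, verifies by the same direct computation you describe that this kills the~$\zeta^{\ell+1}$-coefficient without disturbing lower ones, and then specializes~$x_i\mapsto c_i$. Your bookkeeping claim that each~$\alpha_\ell$ and~$\beta_j$ lies in~$F(\gamma)[x_1,\ldots]$ is handled in the paper simply by observing that at each stage the new coefficient~$A$ lies in~$F_\ell=F(\gamma)[x_1,\ldots,x_\ell]$, which is immediate once one works universally.
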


\begin{proof}
Put~$F_{\infty} \= F(\gamma)[x_1, x_2, \ldots]$ and for each integer~$m \ge 1$
put~$F_m \= F(\gamma)[x_1, \ldots, x_m]$.
Moreover, consider the power series~$\whf(\zeta)$ in~$F_{\infty}[[\zeta]]$
defined by
$$ \whf(\zeta)
\=
\gamma \zeta \left( 1 + \sum_{i = 1}^{+ \infty} x_i \zeta^i \right). $$

Let~$s_0(\zeta)$ and~$h_0(\zeta)$ be the polynomials in~$F(\gamma)[\zeta]$ defined
by
$$ s_0(\zeta) \= 1
\text{ and }
h_0(\zeta) \= \zeta. $$
We define inductively for every integer~$\ell \ge 1$
polynomials~$s_{\ell}(\zeta)$ and~$h_{\ell}(\zeta)$ in~$F_\ell[\zeta]$ of degrees at
most~$\ell + 1$ and~$\left[ \frac{\ell}{q} \right]$, respectively, such that
  \begin{align}
\label{e:finite level conjugacy A}
h_{\ell}(\zeta)
& \equiv
h_{\ell - 1}(\zeta) \mod \left\langle \zeta^{\ell + 1} \right\rangle
\\ \intertext{and}
\label{e:finite level conjugacy B}
s_{\ell}(\zeta)
& \equiv
s_{\ell - 1}(\zeta) \mod \left\langle \zeta^{\left[ \frac{\ell - 1}{q} \right]} \right\rangle,
\\ \intertext{and such that the power series~$\whf_{\ell}(\zeta) \= h_{\ell} \circ \whf \circ
h_{\ell}^{-1}(\zeta)$ in~$F_{\infty}[[\zeta]]$ satisfies}
\label{e:finite level clearing}
\whf_{\ell}(\zeta)
& \equiv
\gamma \zeta s_{\ell}(\zeta^q) \mod \left\langle \zeta^{\ell + 2} \right\rangle.  
  \end{align}
Note that
$$ \whf_0(\zeta)
\=
h_0 \circ \whf \circ h_0^{-1}(\zeta)
=
\whf(\zeta)
\equiv
\gamma \zeta \mod \left\langle \zeta^2 \right\rangle, $$
so~\eqref{e:finite level clearing} is satisfied when~$\ell = 0$.
Let~$\ell \ge 1$ be an integer for which~$s_{\ell - 1}(\zeta)$
and~$h_{\ell - 1}(\zeta)$ are already defined and
satisfy~\eqref{e:finite level clearing} with~$\ell$ replaced by~$\ell
- 1$.
Then there is~$A(x_1, \ldots, x_\ell)$ in~$F_\ell$ such that
$$ \whf_{\ell - 1}(\zeta)
\equiv
\gamma \zeta \left( s_{\ell - 1}(\zeta^q) + A(x_1, \ldots, x_\ell) \zeta^\ell \right) \mod
\left\langle \zeta^{\ell + 2} \right\rangle. $$
In the case where~$\ell$ is divisible by~$q$, the congruence~\eqref{e:finite level
  clearing} is verified if we put
$$ h_{\ell}(\zeta) \= h_{\ell - 1}(\zeta)
\text{ and }
s_{\ell}(\zeta) \= s_{\ell - 1}(\zeta) + A(x_1, \ldots, x_\ell) \zeta^{\frac{\ell}{q}}. $$
Suppose~$\ell$ is not divisible by~$q$.
Then~$\gamma^\ell - 1 \neq 0$, and
$$ B(x_1, \ldots, x_\ell)
\= -
(\gamma^\ell - 1)^{-1} A(x_1, \ldots, x_\ell) $$
defines a polynomial in~$F_\ell$.
Consider the polynomial
$$ h(\zeta) \= \zeta \left( 1 + B(x_1, \ldots, x_\ell) \zeta^\ell \right) $$
in~$F_\ell[\zeta]$ and put
$$ h_{\ell}(\zeta) \= h \circ h_{\ell - 1}(\zeta)
\text{ and }
s_{\ell}(\zeta) \= s_{\ell - 1}(\zeta). $$
Then we have
$$ \whf_{\ell}(\zeta)
=
h_\ell \circ \whf \circ h_{\ell}^{-1}(\zeta)
=
h \circ \whf_{\ell - 1} \circ h^{-1}(\zeta), $$
so there is~$C(x_1, \ldots, x_\ell)$ in~$F_{\ell}$ such that
\begin{equation*}
  \begin{split}
\whf_{\ell}(\zeta)
& \equiv
\gamma \zeta \left( s_{\ell - 1}(\zeta^q) + C(x_1, \ldots, x_\ell) \zeta^\ell \right)
\mod \left\langle \zeta^{\ell + 2} \right\rangle
\\ & \equiv
\gamma \zeta \left( s_{\ell}(\zeta^q) + C(x_1, \ldots, x_\ell) \zeta^\ell \right)
\mod \left\langle \zeta^{\ell + 2} \right\rangle.
  \end{split}
\end{equation*}
Thus, to complete the proof of the induction step it is enough to show that~$C(x_1, \ldots, x_\ell) = 0$.
To do this, note that by our definition of~$B(x_1, \ldots, x_\ell)$ we have
\begin{align*}
h \circ \whf_{\ell - 1}(\zeta)
& =
\whf_{\ell - 1}(\zeta) \left( 1 + B(x_1, \ldots, x_\ell) \whf_{\ell - 1}(\zeta)^\ell \right)
\\ & \equiv
\gamma \zeta \left( s_{\ell - 1}(\zeta^q) + A(x_1, \ldots, x_\ell) \zeta^\ell \right)
\left( 1 + B(x_1, \ldots, x_\ell) \gamma^{\ell} \zeta^{\ell} \right)
\\ & \qquad
\mod \left\langle \zeta^{\ell + 2} \right\rangle
\\ & \equiv
\gamma \zeta \left( s_{\ell - 1}(\zeta^q) + \left( A(x_1, \ldots, x_\ell) + B(x_1, \ldots, x_\ell) \gamma^{\ell} \right) \zeta^{\ell} \right)
\\ & \qquad
\mod \left\langle \zeta^{\ell + 2} \right\rangle
\\ &
\equiv
\gamma \zeta \left( s_{\ell - 1}(\zeta^q) + B(x_1, \ldots, x_\ell) \zeta^{\ell} \right)
\mod \left\langle \zeta^{\ell + 2} \right\rangle.
\end{align*}
On the other hand
\begin{equation*}
  \begin{split}
\whf_{\ell} \circ h(\zeta)
& \equiv
\gamma h(\zeta) \left( s_{\ell - 1}(h(\zeta)^q) + C(x_1, \ldots, x_\ell) h(\zeta)^\ell \right)
\mod \left\langle \zeta^{\ell + 2} \right\rangle
\\ & \equiv
\gamma \zeta \left( 1 + B(x_1, \ldots, x_\ell) \zeta^{\ell} \right)
\left( s_{\ell - 1} \left( \zeta^q \right) + C(x_1, \ldots, x_\ell) \zeta^{\ell} \right)
\\ & \qquad
\mod \left\langle \zeta^{\ell + 2} \right\rangle
\\ & \equiv
\gamma \zeta \left( s_{\ell - 1} \left( \zeta^q \right) + (B(x_1, \ldots, x_\ell) + C(x_1, \ldots, x_\ell)) \zeta^{\ell} \right)
\\ & \qquad
\mod \left\langle \zeta^{\ell + 2} \right\rangle.
  \end{split}
\end{equation*}
Comparing coefficients we conclude that~$B(x_1, \ldots, x_\ell) = 0$.
This completes the proof of the induction step and of~\eqref{e:finite level clearing}.

For each integer~$\ell \ge 1$ let~$\alpha_{\ell}(x_1, \ldots, x_{\ell})$ be the coefficient
of~$\zeta^{\ell + 1}$ in~$h_\ell(\zeta)$ and let~$\beta_\ell(x_1, \ldots, x_{q\ell})$ be the
coefficient of~$\zeta^{\ell}$ in~$s_{q\ell}(\zeta)$.
Then the power series
$$ \whh(\zeta)
\=
\zeta \left( 1 + \sum_{\ell = 1}^{+ \infty} \alpha_\ell(x_1, \ldots, x_\ell)
  \zeta^\ell \right) $$
in~$F_{\infty}[[\zeta]]$ is invertible, and by~\eqref{e:finite level
  conjugacy A}, \eqref{e:finite level conjugacy B}, and~\eqref{e:finite level clearing} we have
$$ \whh \circ \whf \circ \whh^{-1}(\zeta)
=
\gamma \zeta \left( 1 + \sum_{j = 1}^{+ \infty} \beta_j (x_1, x_1, \ldots, x_{j q}) \zeta^{j q} \right). $$
The proposition is obtained by specializing, for each integer~$i \ge
1$, the variable~$x_i$ to~$c_i$.
\end{proof}

\begin{proof}[Proof of Theorem~\ref{t:genericity}]
Put
$$ M_1(x_1,x_2)
\=
\begin{cases}
  x_1 \left( x_1^2 - x_2 \right)
& \text{if~$p$ is odd;}
\\
x_1 x_2 \left( x_1^2 - x_2 \right)
& \text{if~$p = 2$,}
\end{cases} $$ 
and for~$q \ge 2$, let~$\beta_1(x_1, \ldots, x_q)$ and~$\beta_2(x_1, \ldots, x_{2q})$ be the
polynomials given by Lemma~\ref{l:basic normal form reloaded}, and put
$$ M_q(x_1, \ldots, x_{2q})
\=
\beta_1(x_1, \ldots, x_q) \left( \frac{q + 1}{2} \beta_1(x_1, \ldots, x_q)^2 - \beta_2(x_1, \ldots, x_{2q})
\right), $$
if~$p$ is odd, and
\begin{multline*}
  M_q(x_1, \ldots, x_{2q})
\\ \=
\beta_1(x_1, \ldots, x_q) \beta_2(x_1, \ldots, x_{2q}) \left( \beta_1(x_1, \ldots, x_q)^2 - \beta_2(x_1, \ldots, x_{2q})
\right),
\end{multline*}
if~$p = 2$.

By Corollary~\ref{c:minimally ramified q series}, in all the cases we have that a power series~$f(\zeta)$ in~$k[[\zeta]]$ of the form~\eqref{e:typical series}
is minimally ramified if and only if~$M_q(c_1, \ldots, c_{2q}) \neq 0$.
So, it only remains to prove 
that~$M_q(x_1,\ldots, x_{2q})\not\equiv 0$. 
When~$q = 1$ this follows from the definition.
To prove that~$M_q(x_1,\ldots, x_{2q})$ is nonzero when~$q \ge 2$, note
that by Corollary~\ref{c:minimally ramified q series} at least~$1$ of
the following polynomials in~$F(\gamma)[\zeta]$ is minimally ramified:
$$ \gamma \zeta \left( 1 + \zeta^q \right),
\gamma \zeta \left( 1 + \zeta^q + \gamma \zeta^{2q} \right). $$
Thus, either
$$ M_q (\underbrace{0, \ldots, 0}_{q - 1}, 1, \underbrace{0, \dots,
  0}_{q})
\text{ or }
M_q (\underbrace{0, \ldots, 0}_{q - 1}, 1, \underbrace{0, \ldots,
  0}_{q - 1}, \gamma) $$
is nonzero.
This completes the proof of the theorem.
\end{proof}

\begin{proof}[Proof of Theorem~\ref{t:isolated}]
Suppose~$\gamma = 1$, so that~$q = 1$, and let~$a_1$ and~$a_2$ be such that
$$ f(\zeta)
\equiv
\zeta \left( 1 + a_1 \zeta + a_2 \zeta^2 \right)
\mod \left\langle \zeta^4 \right\rangle. $$
Since~$i_0(f) = 1$, we have~$a_1 \neq 0$ and~$\delta_0(f)
= a_1$, so by definition
$$ \resit(f) = 1 - \frac{a_2}{a_1^2}. $$
So the desired assertion is given by Corollary~\ref{c:generic periodic points lower bound g q}.

Suppose~$\gamma \neq 1$, so that~$q \ge 2$.
By Lemma~\ref{l:basic normal form reloaded} there is a power
series~$h(\zeta)$ in~$\Ok[[\zeta]]$ of the form
$$ h(\zeta)
=
\zeta \left( 1 + \sum_{\ell = 1}^{+ \infty} \alpha_{\ell} \zeta^{\ell} \right), $$
such that~$g(\zeta) \= h \circ f \circ h^{-1}(\zeta)$ is of
the form~\eqref{e:reduced form reloaded}.
In particular, $h(\zeta_0)$ is a periodic point of~$g$ in~$\mk$ of minimal
period~$qp^n$.
Furthermore,
$$ \left| h(\zeta_0) - \zeta_0 \right|
=
|\zeta_0| \cdot \left| \sum_{\ell = 1}^{+ \infty} \alpha_{\ell}
  \zeta_0^{\ell} \right|
\le
|\zeta_0|^2
<
|\zeta_0|, $$
so~$|h(\zeta_0)| = |\zeta_0|$.
On the other hand, by Lemma~\ref{l:quasi-invariants}, 
\eqref{e:generic form g after q iterates} in the Main Lemma, 
the fact that $f'(0) = g'(0)$,
and our
hypothesis~$i_0(f^q) = q$, we have
\begin{equation}
  \label{e:normalized invariants}
i_0(f^q) = i_0(g^q) = q
\text{ and }
\delta_0(f^q) = \delta_0(g^q) = qa_1.
\end{equation}
So, $a_1 \neq 0$ and by definition
\begin{equation}
\label{e:resit}
  \resit(f) = \frac{q + 1}{2} - \frac{a_2}{a_1^2}.
\end{equation}
Then the desired estimate is a direct consequence of
Corollary~\ref{c:generic periodic points lower bound g q} applied to~$g$, 
using~$|h(\zeta_0)| = |\zeta_0|$
and that~$q$ is not divisible so~$|q| = 1$.  
\end{proof}

\begin{proof}[Proof of Theorem~\ref{t:characterization of minimally ramified}]
If~$\gamma = 1$, and therefore~$q = 1$, then the desired assertion is given
by Corollary~\ref{c:minimally ramified q series} with~$g = f$.

Suppose~$\gamma \neq 1$, so that~$q \ge 2$.
By Lemma~\ref{l:basic normal form reloaded} there is a power
series~$h(\zeta)$ in~$k[[\zeta]]$ satisfying~$h(\zeta) \equiv \zeta
\mod \left\langle \zeta^2 \right\rangle$, such that~$g(\zeta) \= h
\circ f \circ h^{-1}(\zeta)$ is of the form~\eqref{e:reduced form reloaded}.
Suppose~$f$ is minimally ramified.
Then~$i_0(f^q) = q$ and by Lemma~\ref{l:quasi-invariants} the power
series~$g$ is minimally ramified.
Moreover, by Corollary~\ref{c:minimally ramified q series} we have~$a_1 \neq 0$, so by definition we have~\eqref{e:resit}.
Then Corollary~\ref{c:minimally ramified q series} implies
that~$\resit(f) \neq 0$, and when~$p = 2$ that~$\resit(f) \neq 1$.
This completes the proof of the direct implication when~$q \ge 2$.
To prove the reverse implication, suppose~$i_0(f^q) = q$, $\resit(f)
\neq 0$, and when~$p = 2$ that~$\resit(f) \neq 1$.
By Lemma~\ref{l:quasi-invariants} and~\eqref{e:generic form g after q
  iterates} in the Main Lemma, we have~\eqref{e:normalized invariants}
and~$a_1 \neq 0$.
So, by definition we have~\eqref{e:resit}.
Thus our assumptions on~$\resit(f)$ and Corollary~\ref{c:minimally
  ramified q series} imply that~$g$ is minimally ramified.
By Lemma~\ref{l:quasi-invariants} the power series~$f$ is also
minimally ramified.
This completes the proof of the reverse implication in the case~$q \ge
2$, and of the theorem.
\end{proof}

\section{Generic parabolic points}
\label{s:proof of the main lemma}
The purpose of this section is to prove the Main Lemma stated in~\S\ref{s:a reduction}.
The main ingredient is the following lemma, which is the case where~$q = 1$ 
and $n\geq 1$.
Note that the Main Lemma is trivial in the case where $q=1$ and $n=0$.
The proof of the Main Lemma is at the end of this section.

\begin{lemm}[The case $q=1$ and $n\geq 1$]
\label{l:generic form q=1}
Let~$p$ be a prime number, $k$ an ultrametric field of characteristic~$p$,
and
$$ f(\zeta) = \zeta  + a\zeta^2 + b\zeta^3 + \cdots $$
a power series in~$k[[\zeta]]$.
Given an integer~$n \ge 1$, put
\begin{align*}
\chi_n
& \=
\begin{cases}
a^{p^n-\frac{p^n-1}{p-1}}\left(a^2 - b \right)^{\frac{p^{n}-1}{p-1}}
& \text{if~$p$ is odd;}
\\
a\left(a^2 - b \right)
& \text{if~$p = 2$ and~$n = 1$;}
\\
ab^{2^{n-1}-1}\left(a^2 - b \right)^{2^{n-1}}
& \text{if~$p = 2$ and~$n \ge 2$,}
\end{cases}
\\ \intertext{and}
\xi_n
& =
\begin{cases}
-a^{p^n-\frac{p^n-1}{p-1}-1}\left(a^2 - b \right)^{\frac{p^{n}-1}{p-1}+1}
& \text{if~$p$ is odd;}
\\
b^{2^{n-1}}\left(a^2 - b \right)^{2^{n-1}}
& \text{if~$p = 2$.}
\end{cases}
\end{align*}
Then we have
\begin{equation}
  \label{e:generic form}
f^{p^n}(\zeta)
\equiv
\zeta \left( 1 + \chi_n\zeta^{\frac{p^{n+1}-1}{p-1}} +
  \xi_n\zeta^{\frac{p^{n+1}-1}{p-1}+1} \right)
\mod \left\langle \zeta^{\frac{p^{n+1}-1}{p-1} + 3} \right\rangle.
\end{equation}
\end{lemm}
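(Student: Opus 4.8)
The plan is to prove Lemma~\ref{l:generic form q=1} by induction on~$n$, tracking the power series~$f^{p^n}(\zeta)$ only through the first three ``significant'' terms after the leading~$\zeta$. The key observation is that because~$p$ is the characteristic, taking the $p$-th iterate has a drastic simplifying effect: if we know
$$ g(\zeta) \equiv \zeta\left(1 + c\,\zeta^{m} + d\,\zeta^{m+1}\right) \mod \left\langle \zeta^{m+3} \right\rangle $$
for some~$m$, then composing~$g$ with itself~$p$ times should yield a controlled expression for~$g^p$ modulo a suitable higher power of~$\zeta$, with a new leading exponent governed by the recursion~$m \mapsto pm + 1$ (which produces the geometric sums~$\frac{p^{n+1}-1}{p-1}$ appearing in the statement). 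So the engine of the proof is a single computation: given the three-term normal form for~$g$, compute the corresponding three-term normal form for~$g^p$.

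First I would set up the base case~$n = 1$ directly: starting from~$f(\zeta) = \zeta + a\zeta^2 + b\zeta^3 + \cdots$, compute~$f^2, f^3, \ldots, f^p$ by iterated substitution, carefully keeping terms up to degree~$\frac{p^2-1}{p-1} + 2 = p + 3$. The characteristic-$p$ phenomenon is that the ``obvious'' next term~$pa\zeta^2$ in~$f^p$ vanishes, pushing the first nontrivial correction out to degree~$p+1$; a careful bookkeeping of the binomial-type sums~$\sum_{j=0}^{p-1} j$, $\sum j^2$, etc., modulo~$p$ is what produces the coefficients~$\chi_1 = a^{p - \frac{p-1}{p-1}}(a^2-b)^{1} = a^{p-1}(a^2-b)$ when~$p$ is odd, and the separate formula when~$p = 2$ (where~$\sum_{j=0}^{1} j = 1 \not\equiv 0$, so the degenerate behavior differs). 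Then, for the inductive step, assuming~\eqref{e:generic form} holds for~$n$, I would apply the same ``three-term normal form of~$g^p$'' computation with~$g = f^{p^n}$ and~$m = \frac{p^{n+1}-1}{p-1}$, and check that the output matches~$\chi_{n+1}, \xi_{n+1}$; this amounts to verifying the recursions
$$ \chi_{n+1} = (\text{explicit factor}) \cdot \chi_n^{\,p}, \qquad \xi_{n+1} = (\text{explicit factor}) \cdot \chi_n^{\,p-1}\xi_n $$
by matching exponents in the closed-form expressions, again with the cases~$p$ odd, $p = 2$ with~$n = 1$, and~$p = 2$ with~$n \ge 2$ treated separately because the power of~$b$ versus~$a^2 - b$ behaves differently.

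The one auxiliary reduction I expect to need is a normalization: the quantity~$\frac{p^{n+1}-1}{p-1}$ is not~$\equiv 0 \pmod p$ in general, so the leading coefficient~$\chi_n$ genuinely survives, but when computing~$g^p$ I should first conjugate~$g$ by a scaling (or simply use that~$g$ and its iterates all fix~$0$ with derivative~$1$) so that the degree-$m$ term's contribution to the sum~$\sum_{j=0}^{p-1}(\cdots)$ is computed correctly. Concretely, if~$g(\zeta) = \zeta(1 + c\zeta^m + d\zeta^{m+1} + \cdots)$, then writing~$g^j(\zeta) = \zeta(1 + c_j\zeta^m + \cdots)$ one gets~$c_j \equiv jc$ and a degree-$(m+1)$ coefficient that is a quadratic polynomial in~$j$; summing over~$j = 0, \ldots, p-1$ and using~$\sum_{j} j \equiv 0$ (for~$p$ odd) while~$\sum_j j^2 \equiv 0$ as well forces the leading term of~$g^p$ up to degree~$2m+1$, and the surviving coefficient at degree~$2m+1$ is (up to a sign/combinatorial constant) $c^{p-1}d$ plus a term in~$c^{p+1}$ — this is where the factor~$(a^2-b)$ versus~$b$ dichotomy ultimately originates after unwinding the base case.

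\textbf{The main obstacle} I anticipate is not conceptual but a matter of organizing the bookkeeping so it stays honest: the iterated substitution~$g \mapsto g^p$ involves, a priori, cross-terms of many degrees, and one must argue cleanly that everything below degree~$2m+1$ except the leading~$\zeta$ cancels (this is the heart of the characteristic-$p$ miracle and uses~$p \mid \binom{p}{i}$ for~$0 < i < p$ together with the vanishing of the relevant power sums mod~$p$), and that the degree-$(2m+1)$ and degree-$(2m+2)$ coefficients are exactly as claimed. I would isolate this as a standalone sub-lemma: \emph{if~$g(\zeta) \equiv \zeta(1 + c\zeta^m + d\zeta^{m+1}) \pmod{\langle\zeta^{m+3}\rangle}$ with~$m \ge 1$, then~$g^p(\zeta) \equiv \zeta(1 + \chi'\zeta^{pm+1} + \xi'\zeta^{pm+2}) \pmod{\langle\zeta^{pm+3}\rangle}$} with~$\chi', \xi'$ given by explicit formulas in~$c, d, m, p$ — prove that once, then the entire Lemma~\ref{l:generic form q=1} follows by feeding it the base case and iterating, with the three case-splits handled uniformly by the formula for~$\chi', \xi'$ evaluated along the orbit~$m_n = \frac{p^{n+1}-1}{p-1}$.
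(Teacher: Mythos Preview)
Your overall inductive strategy matches the paper's: establish the base case $n=1$ by direct iteration, then pass from $n$ to $n+1$ by computing $g^p$ for $g = f^{p^n}$ in short normal form. The paper organizes both steps via iterated forward differences $\Delta_j$ (so that $\Delta_p = f^p - \zeta$ in characteristic $p$), rather than tracking coefficients of $g^j$ directly and ``summing over~$j$'' as you suggest; this is a technical rather than conceptual difference.

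However, several of your specifics are wrong in ways that matter. First, the recursions you propose have the wrong shapes. For $p$ odd the inductive computation actually yields
\[
\chi_{n+1} = -\,\chi_n^{\,p-1}\xi_n, \qquad \xi_{n+1} = -\,\chi_n^{\,p-2}\xi_n^{\,2},
\]
and for $p=2$ one gets $\chi_{n+1} = \chi_n\xi_n$ and $\xi_{n+1} = \xi_n^2$; in particular $\chi_{n+1}$ is \emph{not} a constant multiple of $\chi_n^{\,p}$, and your two displayed recursions are essentially swapped. Second, the phrase ``surviving coefficient at degree~$2m+1$'' should read degree~$pm+1$: the first nontrivial term of $g^p$ after $\zeta$ sits at $\zeta^{pm+2}$, consistent with your own earlier remark that the map on exponents is $m \mapsto pm+1$ and with the form $c^{p-1}d$ you (correctly) anticipate for the coefficient. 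Third, your standalone sub-lemma as stated for all $m \ge 1$ is too optimistic. The paper's inductive step uses both that $m \equiv 1 \pmod p$ (needed e.g.\ for $p \mid (p-2)m+2$ and similar cancellations) and that $m \ge 4$, so quadratic cross-terms in $\zeta^m$ land above the truncation level; moreover it carries a \emph{third} input coefficient~$\gamma$ through the calculation before showing it cancels from the two output coefficients. The base case $m=1$ genuinely fails these hypotheses and requires a separate, more delicate computation (split further into $p=3$ and $p\ge 5$) in which the $\zeta^4$-coefficient of $f$ must also be tracked and shown to cancel.
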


\begin{proof}
The proof is divided into Cases 1.1, 1.2, 1.3 and 2.

\partn{Case 1.1}~$p=3$ and~$n=1$.
For each integer~$m\geq 1$ define the power series~$\Delta_m(\zeta)$ in~$k[[\zeta]]$
inductively by~$\Delta_1(\zeta)\=f(\zeta)-\zeta$, 
and for~$m\geq 2$ by
$$\Delta_m(\zeta)\=\Delta_{m-1}(f(\zeta))-\Delta_{m-1}(\zeta).$$
Note that~$\Delta_3(\zeta)=f^3(\zeta)-\zeta$.
By definition~$\Delta_1(\zeta)$ satisfies
$$
\Delta_1(\zeta)
\equiv a\zeta^2+b\zeta^3+c\zeta^4
\mod\left\langle\zeta^{5}\right\rangle.
$$
Using~$p=3$, we have
\begin{equation*}
 \begin{split}
\Delta_{2}(\zeta)
& \equiv
a\zeta^2\left[\left(1+a\zeta+b\zeta^{2}+c\zeta^3\right)^{2}-1\right]
+c\zeta^4\left[(1+a\zeta)^{4}-1\right]
\mod\left\langle\zeta^{6}\right\rangle.
  \end{split}
\end{equation*}
Note that 
\begin{equation*}
\begin{split}
\left( 1+a\zeta+b\zeta^{2} +c\zeta^3 \right)^{2}-1
& \equiv
2a\zeta+2b\zeta^2+2c\zeta^3 +a^2\zeta^2+2ab\zeta^3
\mod\left\langle\zeta^{4}\right\rangle
\\ & \equiv
- a\zeta
+ \left( a^2-b \right)\zeta^2-(ab+c)\zeta^3
\mod\left\langle\zeta^{4}\right\rangle.
\end{split}
\end{equation*}
Consequently, using~$p=3$ we thus have
\begin{equation*}
 \begin{split}
   \Delta_2(\zeta)
& \equiv
-a^2\zeta^ 3 + a \left( a^2 - b \right) \zeta^4 - \left( a^2b+ac \right) \zeta^5+ac\zeta^5
\mod\left\langle\zeta^{6}\right\rangle
\\ & \equiv
-a^2\zeta^3 + a \left( a^2 - b \right)\zeta^4-a^2b\zeta^5
\mod\left\langle\zeta^{6}\right\rangle,
  \end{split}
\end{equation*}
and
\begin{equation*}
 \begin{split}
   \Delta_3(\zeta)
& \equiv
-a^2 \zeta^3\left[ \left( 1+a\zeta+b\zeta^{2} \right)^{3}-1\right]
   +a \left( a^2-b \right) \zeta^4\left[ \left( 1+a\zeta+b\zeta^{2} \right)^{4}-1\right]
\\ & \quad
- a^2b\zeta^5\left[(1+a\zeta)^{5}-1\right]
\mod\left\langle\zeta^{7}\right\rangle
   \\ & \equiv
- a^5\zeta^6+a^2 \left( a^2-b \right) \zeta^5
+ ab \left( a^2-b \right) \zeta^6+a^3b\zeta^6
\mod\left\langle\zeta^{7}\right\rangle
    \\ & \equiv
a^2 \left( a^2-b \right) \zeta^5-a \left( a^2-b \right)^2 \zeta^6
\mod\left\langle\zeta^7\right\rangle.
  \end{split}
\end{equation*}
This completes the proof in the case where~$p=3$ and~$n=1$.

\partn{Case 1.2}~$p\geq 5$ and~$n=1$.
For each integer~$m\geq 1$ define the power series~$\Delta_m(\zeta)$ in~$k[[\zeta]]$
inductively by~$\Delta_1(\zeta)\=f(\zeta)-\zeta$, 
and for~$m\geq 2$ by
$$\Delta_m(\zeta)\=\Delta_{m-1}(f(\zeta))-\Delta_{m-1}(\zeta).$$
Note that~$\Delta_p(\zeta)=f^p(\zeta)-\zeta$.

We first prove
\begin{equation}
\label{e:delta_p-3}
\Delta_{p-3}(\zeta)\equiv-\frac{1}{2}a^{p-3}\zeta^{p-2}-\frac{3}{2}a^{p-2}\zeta^{p-1}\mod\left\langle\zeta^p\right\rangle.
\end{equation}
To prove this, 
for integers~$m\geq 1$ define~$\alpha_m$ and~$\beta_m$ in~$k$ inductively by 
\begin{equation}
\label{e:alpha recurrence}
\alpha_1\=0, \quad \alpha_{m+1}\=(m+2)\alpha_m+(m+1)!\frac{m}{2},
\end{equation}
and
\begin{equation}
\label{e:beta recurrence}
\beta_1\=1, \quad \beta_{m+1}\=(m+2)\beta_{m}+(m+1)!.
\end{equation}
We prove by induction that for~$m\geq 1$ we have
\begin{equation}
\label{e:delta_m}
 \Delta_m(\zeta)\equiv m!a^m\zeta^{m+1}
+ \left( \alpha_ma^{m+1}+\beta_ma^{m-1}b \right) \zeta^{m+2}
\mod \left\langle\zeta^{m+3}\right\rangle.
\end{equation}
When~$m=1$ this is true by definition. 
Let~$m\geq 1$ be such that~\eqref{e:delta_m} holds.
Then
\begin{multline*}
 \begin{aligned}
  \Delta_{m+1}(\zeta) 
& \equiv
   m!a^{m}\zeta^{m+1}\left[ \left( 1+a\zeta+b\zeta^2 \right)^{m+1} -1\right]
\\ & \quad
  + \left( \alpha_ma^{m+1}+\beta_ma^{m-1}b \right) \zeta^{m+2}
\left[ (1+ a\zeta)^{m+2}-1 \right]
 \end{aligned}
\\
\mod \left\langle\zeta^{m+4}\right\rangle.
\end{multline*}
Note that
\begin{multline*}
\left( 1+a\zeta + b \zeta^2 \right)^{m+1} -1
\equiv   
(m+1)a\zeta+\left((m+1)b+\frac{(m+1)m}{2}a^2\right)\zeta^2
\\
\mod \left\langle \zeta^3\right\rangle. 
\end{multline*}
Consequently,
\begin{equation*}
 \begin{split}
   \Delta_{m+1}(\zeta) 
& \equiv
   (m+1)!a^{m+1}\zeta^{m+2}+\left( (m+1)!a^mb+(m+1)!\frac{m}{2}a^{m+2}\right)\zeta^{m+3}
\\ & \quad
  +\left( (m+2)\alpha_ma^{m+2}+(m+2)\beta_ma^{m}b\right )\zeta^{m+3} \mod \left\langle\zeta^{m+4}\right\rangle.
 \end{split}
\end{equation*}
This completes the proof of the induction step and proves~\eqref{e:delta_m}.
To prove~\eqref{e:delta_p-3},
first note that by~\eqref{e:alpha recurrence} and using~$\alpha_1=0$,
we have for~$m$ in~$\{2, \ldots, p - 3 \}$
\begin{equation*}
 \begin{split}
  \frac{\alpha_m}{(m+1)!}
  & =
  \frac{\alpha_{m-1}}{m!}+\frac{1}{2} \cdot \frac{m-1}{m+1}
  \\ & =
\frac{\alpha_{m-1}}{m!}+\frac{1}{2}-\frac{1}{m+1}
  \\& =
\frac{\alpha_{1}}{2!}+\frac{m-1}{2} - \left(
  \frac{1}{3}+\dots+\frac{1}{m+1} \right)
  \\& =
\frac{m-1}{2} - \left( \frac{1}{3}+\dots+\frac{1}{m+1} \right)
  \\& =
\frac{m+2}{2} - \left( 1+\frac{1}{2}+\frac{1}{3}+\dots+\frac{1}{m+1} \right).
 \end{split}
\end{equation*}
Consequently, for~$m = p - 3$, using~$(p-2)!=1$ we obtain~$\alpha_{p -
  3} = - \frac{3}{2}$.
So, to complete the proof of~\eqref{e:delta_p-3} it is enough to show
that~$\beta_{p - 3} = 0$.
To do this, note that using~$\beta_1=1$ we have for every~$m$ in~$\{2,
\ldots, p - 3 \}$
\begin{equation*}
 \begin{split}
  \frac{\beta_m}{(m+1)!}
  & =
  \frac{\beta_{m-1}}{m!}+\frac{1}{m+1}
  \\& = \frac{\beta_{1}}{2!}+\frac{1}{3}+\dots+\frac{1}{m+1}
  \\& = \frac{1}{2}+\frac{1}{3}+\dots+\frac{1}{m+1}.
 \end{split}
\end{equation*}
Hence,  for~$m=p-3$ we obtain~$\beta_{p - 3} = 0$, as required.
This completes the proof of~\eqref{e:delta_p-3}.

To complete the proof of~\eqref{e:generic form} for~$p\geq 5$
and~$n=1$, define~$A$, $B$, and~$C$ in~$k$ by
$$
\Delta_{p-3}(\zeta)=A\zeta^{p-2}+B\zeta^{p-1}+C\zeta^{p}+\cdots .
$$
Note that by~\eqref{e:delta_p-3} we have 
\begin{equation}
\label{e:AB}
A=-\frac{1}{2}a^{p-3}, \text{ and } B=-\frac{3}{2}a^{p-2}=3Aa.
\end{equation}
By~\eqref{e:delta_p-3} and by definition of~$\Delta_{p - 2}$, we have
\begin{equation*}
 \begin{split}
  \Delta_{p-2}(\zeta)
  & \equiv
  A\zeta^{p-2}\left[ \left( 1+a\zeta +b\zeta^2 +c\zeta^3 \right)^{p-2}-1\right] 
  \\ & \quad
+  3Aa\zeta^{p-1}\left[ \left( 1+a\zeta+b\zeta^2 \right)^{p-1}-1\right]
  \\ & \quad
+C\zeta^p\left[(1+a\zeta)^p-1\right] 
  \mod\left\langle \zeta^{p+2}\right\rangle.
 \end{split}
\end{equation*}
Note that 
\begin{multline*}
\left( 1+a\zeta 
  +
  b\zeta^2 +c\zeta^3 \right)^{p-2}-1
\equiv
-2a\zeta + \left( 3a^2-2b \right)\zeta^2
+ \left( -4a^3+6ab-2c \right)\zeta^3
\\
\mod \left\langle\zeta^4\right\rangle,
\end{multline*}
and
\begin{equation*}
\left( 1+a\zeta + b\zeta^2 \right)^{p-1}-1
\equiv
-a\zeta + \left( a^2-b \right)\zeta^2 \mod \left\langle\zeta^3\right\rangle.
\end{equation*}
Since by assumption~$2p \ge p+2$, we thus have
\begin{equation*}
 \begin{split}
  \Delta_{p-2}(\zeta)
  & \equiv
  -2aA\zeta^{p-1}+A \left(3a^2-2b \right) \zeta^p +
A \left( -4a^3+6ab-2c \right) \zeta^{p+1}
  \\ & \quad
-  3Aa^2\zeta^p+3Aa\left(a^2 - b \right)\zeta^{p+1} \mod\left\langle\zeta^{p+2} \right\rangle
  \\ & \equiv
  -2aA\zeta^{p-1}-2Ab\zeta^p
- A \left( a^3-3ab+2c \right) \zeta^{p+1}
\mod\left\langle\zeta^{p+2} \right\rangle.
\end{split}
\end{equation*}
Then we have
\begin{equation*}
 \begin{split}
  \Delta_{p-1}(\zeta)
  & \equiv
  -2Aa\zeta^{p-1}\left[ \left( 1+a\zeta +b\zeta^2 +c\zeta^3 \right)^{p-1}-1\right] 
  \\ & \quad
  -2Ab\zeta^p\left[ \left( 1+a\zeta+b\zeta^2 \right)^p -1\right]
 \\ & \quad
- A \left( a^3 - 3ab + 2c \right)\zeta^{p+1} \left[ (1+a\zeta)^{p+1} -
  1 \right]
\mod\left\langle \zeta^{p+3}\right\rangle.
 \end{split}
\end{equation*}
Note that 
\begin{multline*}
  \left( 1+a\zeta + b\zeta^2+c\zeta^3 \right)^{p-1}-1
\equiv
-a\zeta+\left(a^2 - b \right)\zeta^2
+ \left( -a^3+2ab-c \right)\zeta^3
\\
\mod \left\langle\zeta^4\right\rangle.
\end{multline*}
Consequently, since by assumption~$2p \ge p+3$, we thus have
\begin{equation*}
 \begin{split}
  \Delta_{p-1}(\zeta)
  & \equiv
  2Aa^2\zeta^{p}-2Aa \left( a^2-b \right) \zeta^{p+1}
+ 2Aa \left( a^3-2ab+c \right)\zeta^{p+2}
  \\ & \quad
  - Aa \left( a^3 - 3ab + 2c \right)\zeta^{p+2}
\mod\left\langle \zeta^{p+3}\right\rangle
  \\ & \equiv
2Aa^2\zeta^{p}-2Aa\left(a^2 - b \right)\zeta^{p+1}
\\ & \quad
+Aa^2\left(a^2 - b \right)\zeta^{p+2}
\mod\left\langle \zeta^{p+3}\right\rangle.
  \end{split}
\end{equation*}
It follows that, using~$2p \ge p+4$, we have
\begin{equation*}
 \begin{split}
  \Delta_{p}(\zeta)
  & \equiv
2Aa^2\zeta^p\left[ \left( 1+a\zeta+b\zeta^2+c\zeta^3 \right)^{p} - 1 \right]
  \\  & \quad
- 2 A a \left(a^2 - b \right) \zeta^{p+1}\left[ \left(1+a\zeta +b\zeta^2\right)^{p+1}-1\right] 
  \\  & \quad
+ A a^2 \left(a^2 - b \right) \zeta^{p+2}\left[(1+a\zeta)^{p+2}
  -1\right]
\mod\left\langle \zeta^{p+4}\right\rangle
 \\ & \equiv
- 2 A a^2 \left(a^2 - b \right) \zeta^{p+2}
- 2 A ab \left(a^2 - b \right) \zeta^{p+3}
\\ & \quad
+ 2Aa^3 \left( a^2 - b \right) \zeta^{p + 3}
\mod\left\langle \zeta^{p+4}\right\rangle
\\ & \equiv
- 2 A a^2 \left(a^2 - b \right) \zeta^{p+2}
+ 2Aa \left(a^2 - b \right)^2 \zeta^{p + 3}
\mod\left\langle \zeta^{p+4}\right\rangle.
 \end{split}
\end{equation*}
Using~$\Delta_p(\zeta) = f^p(\zeta) - \zeta$, $A = - a^{p - 3}/2$, and
the definitions of~$\chi_1$ and~$\xi_1$, we obtain~\eqref{e:generic form} in the case where
$p\geq 5$ and~$n=1$.

\partn{Case 1.3}~$p\geq 3$ and~$n\geq 2$.
We proceed by induction.
Since~\eqref{e:generic form} for~$n = 1$ was shown in Cases~$1.1$ and~$1.2$, we can
assume that~\eqref{e:generic form} holds for some integer~$n \ge 1$.
Put
$$ g \= f^{p^n}, \text{ }
d \= \frac{p^{n+1}-1}{p-1},
\text{ }\alpha \= \chi_n,
\text{ and }
\beta \= \xi_n,
$$
so there is~$\gamma$ in~$k$ such that
$$
g(\zeta)
\equiv
\zeta+\alpha\zeta^{d+1}+\beta\zeta^{d+2}+\gamma\zeta^{d+3}
\mod \left\langle \zeta^{d + 4} \right\rangle.
$$
Note that
\begin{equation}
\label{e:main case recursion}
d \ge 4, \text{ }
d \equiv 1 \mod p, \text{ }
\chi_{n+1} = - \alpha^{p-1}\beta,
\text{ and }
\xi_{n+1} = -\alpha^{p-2}\beta^2.
\end{equation}
For each integer~$m\geq 1$ define the power series~$\hDelta_m(\zeta)$ in~$k[[\zeta]]$
inductively by~$\hDelta_1(\zeta)\=g(\zeta)-\zeta$, 
and for~$m\geq 2$ by
$$\hDelta_m(\zeta)\=\hDelta_{m-1}(g(\zeta))-\hDelta_{m-1}(\zeta).$$
Note that~$\hDelta_p(\zeta)=g^p(\zeta)-\zeta = f^{p^{n + 1}}(\zeta) - \zeta$.

We first prove
\begin{multline}
\label{e:delta_p-2ngeq2}
\hDelta_{p-2}(\zeta)
\equiv
\alpha^{p-2}\zeta^{(p-2)d+1}+\alpha^{p-3}\beta\zeta^{(p-2)d+2}+\alpha^{p-3}
\gamma\zeta^{(p-2)d+3}
\\
\mod\left\langle\zeta^{(p-2)d+4}\right\rangle.
\end{multline}
We then proceed to calculate~$\hDelta_{p-1}(\zeta)$ and~$\hDelta_{p}(\zeta)$ respectively.
Note that~\eqref{e:delta_p-2ngeq2} is true by definition when~$p=3$.
To prove~\eqref{e:delta_p-2ngeq2} for~$p\geq 5$, we first prove
\begin{equation}
\label{e:Delta_p-3 p5}
\hDelta_{p-3}(\zeta)\equiv -\frac{1}{2}\alpha^{p-3}\zeta^{(p-3)d+1}\mod\left\langle\zeta^{(p-3)d+3}\right\rangle.
\end{equation}
To prove this, for  integers~$m\geq1$ we define~$C_m$ in~$k$ inductively by  
\begin{equation*}
C_1\=1, \quad C_{m+1}\=\prod_{j=1}^{m}(jd+1)+(md+2)C_m.
\end{equation*}
We prove by induction that for~$m\geq 1$ we have
\begin{multline}
\label{e:Delta_m p5}
\hDelta_m(\zeta)
\equiv
\left( \prod_{j=0}^{m-1}(jd+1) \right) \alpha^m \zeta^{md+1} + C_m\alpha^{m-1}\beta\zeta^{md+2}
\\
\mod\left\langle\zeta^{md+3}\right\rangle.
\end{multline}
This follows by induction since it holds trivially for~$m=1$ and
for~$m \ge 1$ for which this holds we have
\begin{equation*}
 \begin{split}
   \hDelta_{m+1}(\zeta)
& \equiv
   \left (\prod_{j=1}^{m-1}(jd+1)\right ) \alpha^m\zeta^{md+1}
\left[\left( 1+\alpha\zeta^{d}+\beta\zeta^{d+1} \right)^{md+1}-1\right]
   \\ & \quad
+C_m\alpha^{m-1}\beta\zeta^{md+2}\left[ \left( 1+\alpha\zeta^{d}
  \right)^{md+2}-1\right]
\mod\left\langle\zeta^{(m+1)d+3}\right\rangle
   \\ & \equiv
 \left ( \prod_{j=1}^{m}(jd+1)\right ) \alpha^{m+1}\zeta^{(m+1)d+1}
   \\ & \quad
+ \alpha^{m}\beta\zeta^{(m+1)d+2} \left(\prod_{j=1}^{m}(jm+1)
  +(md+2)C_m\right)
\\ & \qquad
\mod\left\langle\zeta^{(m+1)d+3}\right\rangle.
 \end{split}
\end{equation*}
We then prove that~$C_{p-3}=0$. Using that~$d\equiv 1\mod p$, for~$m\geq 2$ we have that~$\{C_m\}_{m\geq 1}$
satisfies the recursive relation 
$$
C_{m+1}=(m+1)!+(m+2)C_{m},
$$
with initial condition~$C_1=1$. Hence~$\{C_{m}\}_{m\geq 1}$ satisfies~\eqref{e:beta recurrence} and thus~$C_{p-3}=0$
as shown in Case~$1.2$.
Moreover, using that for~$m=p-3$ we have~$(p-3)!=-1/2$ in~$k$, from~\eqref{e:Delta_m p5} 
we thus obtain~\eqref{e:Delta_p-3 p5} as asserted. 

The congruence~\eqref{e:Delta_p-3 p5} implies that for some~$A$ in~$k$
the series~$\hDelta_{p-3}(\zeta)$ satisfies
\begin{equation*}
\hDelta_{p-3}(\zeta)
\equiv
-\frac{1}{2}\alpha^{p-3}\zeta^{(p-3)d+1}+A\zeta^{(p-3)d+3}
\mod\left\langle\zeta^{(p-3)d+4}\right\rangle.
\end{equation*}
Using that~$d\geq 4$ and that~$p$ divides~$(p-3)d+3$,  we thus have
\begin{equation*}
 \begin{split}
   \hDelta_{p-2}(\zeta)
& \equiv
   -\frac{1}{2}\alpha^{p-3}\zeta^{(p-3)d+1}
   \left[ \left( 1+\alpha\zeta^{d}+\beta\zeta^{d+1}+\gamma\zeta^{d+2} \right)^{(p-3)d+1}-1\right]
\\ & \quad
+ A\zeta^{(p-3)d+3} \left[ \left(1 + \alpha \zeta^{d} \right)^{(p -3)d + 3} -
1 \right]
   \mod\left\langle\zeta^{(p-2)d+4}\right\rangle
   \\ & \equiv
   \alpha^{p-2}\zeta^{(p-2)d+1}+\alpha^{p-3}\beta\zeta^{(p-2)d+2}
\\ & \quad
+\alpha^{p-3} \gamma\zeta^{(p-2)d+3}
\mod\left\langle\zeta^{(p-2)d+4}\right\rangle.
 \end{split}
\end{equation*}
This completes the proof of~\eqref{e:delta_p-2ngeq2}.

Using that~\eqref{e:delta_p-2ngeq2} holds for all~$p\geq 3$, that~$d\geq 4$, and~$p$ divides~$(p-2)d+2$, we obtain
\begin{equation*}
 \begin{split}
\hDelta_{p-1}(\zeta)
& \equiv
\alpha^{p-2}\zeta^{(p-2)d+1}\left[ \left(
    1+\alpha\zeta^{d}+\beta\zeta^{d+1}+\gamma\zeta^{d+2} \right)^{(p-2)d+1}-1\right]
\\ & \quad
+ \alpha^{p-3}\beta \zeta^{(p-2)d +2}\left[ \left( 1+\alpha\zeta^d
+ \beta \zeta^{d + 1} \right)^{(p-2)d+2}-1\right]
\\ & \quad
+ \alpha^{p-3}\gamma \zeta^{(p-2)d +3}\left[ \left( 1+\alpha\zeta^d
  \right)^{(p-2)d+3}-1\right]
\mod\left\langle\zeta^{(p-1)d+4}\right\rangle
\\ & \equiv 
-\alpha^{p-1}\zeta^{(p-1)d+1}-\alpha^{p-2}\beta\zeta^{(p-1)d+2}
\\ & \quad
-\alpha^{p-2} \gamma\zeta^{(p-1)d+3}+\alpha^{p-2}
\gamma\zeta^{(p-1)d+3}
\mod\left\langle\zeta^{(p-1)d+4}\right\rangle
\\ & \equiv 
-\alpha^{p-1}\zeta^{(p-1)d+1}-\alpha^{p-2}\beta\zeta^{(p-1)d+2}
\mod\left\langle\zeta^{(p-1)d+4}\right\rangle.
 \end{split}
\end{equation*}
Using~\eqref{e:main case recursion}, we thus have
\begin{equation*}
 \begin{split}
   \hDelta_{p}(\zeta)
& \equiv
-\alpha^{p-1}\zeta^{(p-1)d+1}
\left[\left( 1+\alpha\zeta^d+\beta\zeta^{d+1} + \gamma \zeta^{d + 2} \right)^{(p-1)d+1}-1\right]
\\ & \quad
   -\alpha^{p-2}\beta\zeta^{(p-1)d+2}
\left[\left( 1+\alpha\zeta^d+\beta\zeta^{d+1} \right)^{(p-1)d+2}-1\right]
\\ & \qquad
\mod\left\langle\zeta^{pd+4}\right\rangle
   \\ & \equiv
   -\alpha^{p-1}\beta\zeta^{pd+2}-\alpha^{p-2}\beta^2\zeta^{pd+3}
\mod\left\langle\zeta^{pd+4}\right\rangle.
   \\ & \equiv
   \chi_{n + 1} \zeta^{pd+2}  + \xi_{n + 1} \zeta^{pd+3}
\mod\left\langle\zeta^{pd+4}\right\rangle.
 \end{split}
\end{equation*}
Using~$\Delta_p(\zeta) = f^{p^{n + 1}}(\zeta) - \zeta$ and the
definition of~$d$, this completes the proof of~\eqref{e:generic form} for~$p\geq 3$.

\partn{Case 2}~$p=2$.
We proceed by induction.
The case~$n = 1$ follows from 
the fact that for $f(\zeta)=\zeta +a\zeta^2+b\zeta^3+c\zeta^4+d\zeta^5+\dots$ in $k[[\zeta]]$, 
using repeatedly that $p=2$, we have
\begin{equation*}
\begin{split}
f^2(\zeta)
\equiv & 
 \zeta\left (1+a\zeta+b\zeta^2+c\zeta^3+d\zeta^4\right) +a\zeta^2\left(1+a\zeta +b\zeta^2+c\zeta^3\right)^2 \\
   & + b\zeta^3\left( 1+a\zeta+ b\zeta^2 \right)^3 +c\zeta^4\left(1+a\zeta \right)^4+d\zeta^5
\mod \langle\zeta^6\rangle\\
\equiv & \left(\zeta+a\zeta^2+b\zeta^3+c\zeta^4+d\zeta^5\right) +a\zeta^2\left(1+a^2\zeta^2\right) \\
   & + b\zeta^3\left(1+a\zeta+ b\zeta^2  \right)\left(1+a^2\zeta^2\right) +c\zeta^4+d\zeta^5
\mod \langle\zeta^6\rangle\\
\equiv &  \zeta+a(a^2-b)\zeta^4+b(a^2-b)\zeta^{5} \mod\langle\zeta^6\rangle.
\end{split}
\end{equation*}

Let~$n \ge 1$ be an integer for which~\eqref{e:generic form} holds,
and note that
\begin{equation}
  \label{e:recursive relation p = 2}
\chi_{n + 1} = \chi_n \cdot \xi_n
\text{ and }
\xi_{n + 1} = \xi_n^2.  
\end{equation}
Put
$$ \tDelta_1(\zeta)\=f^{2^n}(\zeta)-\zeta
\text{ and }
\tDelta_2(\zeta)
\=
\tDelta_1 \left( f^{2^n}(\zeta) \right)-\tDelta_1(\zeta), $$
and note that~$\tDelta_2(\zeta) = f^{2^{n + 1}}(\zeta) - \zeta$.
By our induction hypothesis there is~$A$ in~$k$ such that
$$
 \tDelta_{1}(\zeta)
\equiv
\chi_n \zeta^{2^{n+1}} + \xi_n \zeta^{2^{n+1}+1}
+
A \zeta^{2^{n+1}+2}
  \mod\left\langle\zeta^{2^{n+1}+3}\right\rangle,
$$
and therefore
\begin{multline*}
 \begin{aligned}
  \tDelta_{2}(\zeta)
 & \equiv
  \chi_n \zeta^{2^{n+1}}\left[ \left( 1 + \chi_n \zeta^{2^{n+1} - 1} +
      \xi_n \zeta^{2^{n+1}} + A \zeta^{2^{n+1}+1} \right)^{2^{n+1}} -1 \right]
  \\ & \quad
+ \xi_n \zeta^{2^{n+1}+1}\left[ \left( 1 + \chi_n \zeta^{2^{n+1} - 1} +
    \xi_n \zeta^{2^{n+1}} \right)^{2^{n+1} + 1} - 1 \right] 
 \end{aligned}
\\
  \mod\left\langle\zeta^{2^{n+2}+2}\right\rangle.
\end{multline*}
Note that
\begin{equation*}
\left( 1 + \chi_n \zeta^{2^{n+1} - 1} + \xi_n \zeta^{2^{n+1}}
+ A \zeta^{2^{n+1}+1} \right)^{2^{n+1}} - 1
\equiv 0\mod \left\langle \zeta^{2^{n+1}+2}\right\rangle,
\end{equation*}
and
\begin{multline*}
\left( 1 + \chi_n \zeta^{2^{n+1} - 1}
+ \xi_n \zeta^{2^{n+1}} \right)^{2^{n+1} + 1} - 1
\\ \equiv
\chi_n \zeta^{2^{n+1} - 1}  + \xi_n \zeta^{2^{n+1}}
\mod \left\langle \zeta^{2^{n+1}+1}\right\rangle.
\end{multline*}
We thus have by~\eqref{e:recursive relation p = 2}
\begin{equation*}
  \begin{split}
f^{2^{n + 1}}(\zeta) - \zeta
=
\tDelta_{2}(\zeta)
& \equiv
\chi_n \xi_n \zeta^{2^{n+2}}+ \xi_n^2 \zeta^{2^{n+2}+1} \mod\left\langle\zeta^{2^{n+2}+2}\right\rangle
\\ & \equiv
\chi_{n + 1} \zeta^{2^{n+2}}+ \xi_{n + 1} \zeta^{2^{n+2}+1} \mod\left\langle\zeta^{2^{n+2}+2}\right\rangle.
  \end{split}
\end{equation*}
This completes the proof of the induction step, and hence of the lemma
in the case~$p = 2$.

The proof of Lemma~\ref{l:generic form q=1} is thus complete.
\end{proof}

\begin{lemm}
\label{l:lth iterate q=1}
 Let~$p$ be a prime number, $k$~a field of characteristic~$p$,
and
$$ f(\zeta) = \zeta  + a\zeta^2+b\zeta^3+\cdots $$
a power series in~$k[[\zeta]]$.
Then for every integer~$\ell \geq 1$ we have
\begin{equation}
\label{e:lth iterate q=1}
f^{\ell}(\zeta)\equiv \zeta+\ell a\zeta^2 +(\ell (\ell-1)a^2+ \ell b)\zeta^3 \mod \left\langle \zeta ^4\right\rangle.
\end{equation}
\end{lemm}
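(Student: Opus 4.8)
The plan is a routine induction on $\ell$, the whole statement being a computation in the truncated ring $k[[\zeta]]/\langle\zeta^4\rangle$. For $\ell = 1$ the claimed congruence is just the defining expansion $f(\zeta) = \zeta + a\zeta^2 + b\zeta^3 + \cdots$, since $\ell(\ell-1)a^2 + \ell b$ equals $b$ when $\ell = 1$. For the inductive step I would assume \eqref{e:lth iterate q=1} for some $\ell \ge 1$, set $w \= f^{\ell}(\zeta)$, and expand $f^{\ell+1}(\zeta) = f(w) = w + a w^2 + b w^3 + \cdots$ modulo $\langle\zeta^4\rangle$.

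The computation is immediate: since $w \equiv \zeta \mod \langle\zeta^2\rangle$, one gets $w^2 \equiv \zeta^2 + 2\ell a\zeta^3$, $w^3 \equiv \zeta^3$, and $w^j \equiv 0$ for $j \ge 4$, all modulo $\langle\zeta^4\rangle$, so only the coefficients $a$ and $b$ of $f$ intervene. Reading off the coefficients of $\zeta^2$ and $\zeta^3$ in $f(w)$ gives the recursion $A_{\ell+1} = A_\ell + a$ and $B_{\ell+1} = B_\ell + 2\ell a^2 + b$ for the coefficients $A_\ell, B_\ell$ of $\zeta^2, \zeta^3$ in $f^\ell(\zeta)$; with $A_1 = a$ and $B_1 = b$ these telescope to $A_\ell = \ell a$ and $B_\ell = \ell b + 2a^2\sum_{j=1}^{\ell-1}j = \ell b + \ell(\ell-1)a^2$, which is exactly \eqref{e:lth iterate q=1}. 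The one arithmetic fact used, $\ell(\ell-1) + 2\ell = \ell(\ell+1)$, holds in $\Z$ and hence in $k$, so the characteristic plays no role, and the coefficients that appear are merely the images of fixed integer polynomials in $a$ and $b$.

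There is no real obstacle here; the only mild point to keep in mind is to work systematically modulo $\langle\zeta^4\rangle$ so that the higher-order terms of $f$ (the coefficient of $\zeta^4$ and beyond) never enter the bookkeeping. One could just as well run the induction with $f^{\ell+1} = f^{\ell}\circ f$ in place of $f\circ f^{\ell}$, with identical effort. This low-precision iterate formula is then combined with Lemma~\ref{l:generic form q=1} in the proof of the Main Lemma.
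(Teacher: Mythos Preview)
Your proposal is correct and follows essentially the same approach as the paper: both argue by induction on~$\ell$, computing $f^{\ell+1}=f\circ f^{\ell}$ modulo $\langle\zeta^4\rangle$ and using the identity $\ell(\ell-1)+2\ell=(\ell+1)\ell$. The only cosmetic difference is that you phrase the inductive step as a recursion for the coefficients and then telescope, whereas the paper verifies the closed form directly; the underlying computation is identical.
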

\begin{proof}
We proceed by induction. When~$\ell =1$ the congruence~\eqref{e:lth iterate q=1}
holds by definition of~$f$. Let~$\ell\geq 1$ be an integer for which~\eqref{e:lth iterate q=1} holds.
Then
\begin{equation*}
\begin{split}
f^{\ell+1}(\zeta) 
& \equiv
\zeta+ \ell a\zeta^2 + \left( \ell (\ell-1)a^2+ \ell b \right) \zeta^3
 + a \left[ \zeta+ \ell a\zeta^2 \right]^2 + b\zeta^3
\mod \left\langle \zeta^4\right\rangle
\\ & \equiv
\zeta+ (\ell+1)a\zeta^2 + \left[ (\ell(\ell-1)+2\ell) a^2+(\ell +1)b \right] \zeta^3
\mod \left\langle \zeta^4\right\rangle
\\ & \equiv
\zeta+ (\ell+1)a\zeta^2 + \left[ (\ell+1)\ell a^2+(\ell +1)b \right] \zeta^3
\mod \left\langle \zeta^4\right\rangle.
\end{split}
\end{equation*}
This completes the induction step and hence the proof of~\eqref{e:lth iterate q=1} as required.
\end{proof}

\begin{proof}[Proof of the Main Lemma]
The case where~$q=1$ and $n=0$ is trivial and 
the case where~$q=1$ and $n\geq 1$ is given by Lemma~\ref{l:generic form q=1}. 
In what follows we assume~$q\geq 2$.
Put
$$ \pi(\zeta) \= \zeta^q
\text{ and }
\whg(\zeta) \= \zeta \left( 1 + \sum_{j = 1}^{+ \infty} a_j \zeta^j \right)^q, $$
and note that~$\pi \circ g = \whg \circ \pi$.
Clearly
\begin{equation*}
\whg(\zeta) 
\equiv
\zeta+ qa_1 \zeta^2 +\left( \frac{q(q-1)}{2}a_1^2+ q a_2  \right)\zeta^3   \mod \left\langle \zeta^4\right\rangle.
\end{equation*}
Using Lemma~\ref{l:lth iterate q=1} with~$\ell=q$,~$a=qa_1$, and~$b=\frac{q(q-1)}{2}a_1^2+qa_2,$ we obtain
\begin{equation}
\label{e:generic form g q iterates hat}
\begin{aligned}
\whg^{q}(\zeta) 
& \equiv
\zeta+ q (q a_1) \zeta^2
\\ & \quad
+ \left [q(q-1)(qa_1)^2 + q\left( \frac{q(q-1)}{2}a_1^2+ q a_2 \right ) \right]\zeta^3 
\mod \left\langle \zeta^4\right\rangle
\\ & \equiv
\zeta+ q^2 a_1\zeta^2
\\ & \quad
+q^2\left [\left (q(q-1)+\frac{q-1}{2}\right )a_1^2+ a_2   \right]\zeta^3 
 \mod \left\langle \zeta^4\right\rangle
\\ & \equiv
\zeta+ q^2 a_1\zeta^2 + q^2\left [\left (q^2-\frac{q+1}{2}
  \right)a_1^2+ a_2   \right] \zeta^3 
 \mod \left\langle \zeta^4\right\rangle.
\end{aligned}
\end{equation}
Let~$A_0$, $A_1$, \ldots, $A_q$ in~$k$ be such that
$$
g^{q}(\zeta)
\equiv
\zeta \left( 1 + A_0\zeta^q + \cdots + A_q \zeta^{2q} \right)
\mod \left\langle \zeta^{2q + 2} \right\rangle.
$$
Then we have
\begin{multline*}
\pi \circ g^q(\zeta)
\equiv
\zeta^q \left( 1 + q A_0\zeta^q + \cdots + q A_{q - 1} \zeta^{2q - 1} + \left( \frac{q(q - 1)}{2}
    A_0^2 + q A_q \right) \zeta^{2q} \right)
\\
\mod \left\langle \zeta^{3q + 1} \right\rangle.
\end{multline*}
Together with the semi-conjugacy~$\pi \circ g^{q} = \whg^{q} \circ
\pi$ and with~\eqref{e:generic form g q iterates hat}, this implies
$$ A_0 = q a_1,
A_1 = \cdots = A_{q - 1} = 0,
\text{ and }
A_q = q \left[ \left(\frac{q^2 - 1}{2} \right)a_1^2+ a_2 \right]. $$
This proves the desired assertion when~$n=0$.  

Fix an integer~$n \ge 1$.
By the semi-conjugacy~$\pi \circ g^{q} = \whg^{q} \circ \pi$, we have~$\pi \circ g^{qp^n} = \whg^{qp^n} \circ \pi$.
On the other hand, by~\eqref{e:generic form g q iterates hat}, the
definitions of~$\chi_{q, n}$ and~$\xi_{q, n}$, the fact that~$q^p = q$
in~$k$, and by Lemma~\ref{l:generic form q=1} with
\begin{equation*}
a = q^2a_1
\text{ and }
b = q^2\left (q^2-\frac{q+1}{2} \right)a_1^2+ q^2a_2,
\end{equation*}
we have, using
\begin{equation*}
a^2 - b
=
q^2\left (\frac{q+1}{2}a_1^2 - a_2\right),
\end{equation*}
that
\begin{multline}
\label{e:generic form g iterates hat}
\whg^{qp^n}(\zeta)
\equiv
\zeta \left( 1 + q \chi_{q, n} \zeta^{\frac{p^{n+1}-1}{p-1}} + q
  \xi_{q, n} \zeta^{\frac{p^{n+1}-1}{p-1}+1} \right)
\\
\mod \left\langle \zeta^{\frac{p^{n+1}-1}{p-1}+3} \right\rangle.
\end{multline}
Together with~$\pi \circ g^{qp^n} = \whg^{qp^n} \circ \pi$, this implies
that there are constants~$B_0$, $B_1$, \ldots, $B_q$ in~$k$ such that
\begin{multline*}
g^{qp^n}(\zeta)
\equiv
\zeta \left( 1+B_0\zeta^{q\frac{p^{n+1}-1}{p-1}}+ \cdots + B_q
  \zeta^{q\frac{p^{n+1}-1}{p-1}+q} \right)
\\
\mod \left\langle \zeta^{q\frac{p^{n+1}-1}{p-1}+ q+2} \right\rangle.
\end{multline*}
Consequently, we have
\begin{multline*}
\pi\circ g^{qp^n}(\zeta)
\equiv
\zeta^q \left( 1 + qB_0 \zeta^{q\frac{p^{n+1}-1}{p-1}}
+ \cdots + 
qB_q\zeta^{q\frac{p^{n+1}-1}{p-1}+q} \right) 
\\
\mod \left\langle \zeta^{q\frac{p^{n+1}-1}{p-1}+ q+2} \right\rangle.
\end{multline*}
Using~$\pi \circ g^{qp^n} = \whg^{qp^n} \circ \pi$
and~\eqref{e:generic form g iterates hat}, it follows that
$$ B_0 = \chi_{q, n},
B_1 = \cdots = B_{q - 1} = 0,
\text{ and }
B_q = \xi_{q, n}. $$
This completes the proof of the Main Lemma.
\end{proof}

\addcontentsline{toc}{section}{References}

\bibliographystyle{alpha}

\begin{thebibliography}{LMS02}

\bibitem[{\'E}ca75]{Eca75}
J.~{\'E}calle.
\newblock Th\'eorie it\'erative: introduction \`a la th\'eorie des invariants
  holomorphes.
\newblock {\em J. Math. Pures Appl. (9)}, 54:183--258, 1975.

\bibitem[HY83]{HerYoc83}
M.~Herman and J.-C. Yoccoz.
\newblock Generalizations of some theorems of small divisors to
  non-{A}rchimedean fields.
\newblock In {\em Geometric dynamics ({R}io de {J}aneiro, 1981)}, volume 1007
  of {\em Lecture Notes in Math.}, pages 408--447. Springer, Berlin, 1983.

\bibitem[Kea92]{Kea92}
Kevin Keating.
\newblock Automorphisms and extensions of {$k((t))$}.
\newblock {\em J. Number Theory}, 41(3):314--321, 1992.

\bibitem[Lan02]{Lan02}
Serge Lang.
\newblock {\em Algebra}, volume 211 of {\em Graduate Texts in Mathematics}.
\newblock Springer-Verlag, New York, third edition, 2002.

\bibitem[Lin04]{Lin04}
Karl-Olof Lindahl.
\newblock On {S}iegel's linearization theorem for fields of prime
  characteristic.
\newblock {\em Nonlinearity}, 17(3):745--763, 2004.

\bibitem[Lin10]{Lin10}
Karl-Olof Lindahl.
\newblock Divergence and convergence of conjugacies in non-{A}rchimedean
  dynamics.
\newblock In {\em Advances in {$p$}-adic and non-{A}rchimedean analysis},
  volume 508 of {\em Contemp. Math.}, pages 89--109. Amer. Math. Soc.,
  Providence, RI, 2010.

\bibitem[LMS02]{LauMovSal02}
Fran{\c{c}}ois Laubie, Abbas Movahhedi, and Alain Salinier.
\newblock Syst\`emes dynamiques non archim\'ediens et corps des normes.
\newblock {\em Compositio Math.}, 132(1):57--98, 2002.

\bibitem[LRL16]{LinRiv16}
Karl-Olof Lindahl and Juan Rivera-Letelier.
\newblock Optimal cycles in ultrametric dynamics and minimally ramified power
  series.
\newblock {\em Compositio Math.}, 152:187--222, 2016.

\bibitem[LS98]{LauSai98}
F.~Laubie and M.~Sa{\"{\i}}ne.
\newblock Ramification of some automorphisms of local fields.
\newblock {\em J. Number Theory}, 72(2):174--182, 1998.

\bibitem[Lub94]{Lub94}
Jonathan Lubin.
\newblock Non-{A}rchimedean dynamical systems.
\newblock {\em Compositio Math.}, 94(3):321--346, 1994.

\bibitem[RL03]{Rivthese}
Juan Rivera-Letelier.
\newblock Dynamique des fonctions rationnelles sur des corps locaux.
\newblock {\em Ast\'erisque}, (287):xv, 147--230, 2003.
\newblock Geometric methods in dynamics. II.

\bibitem[Sen69]{Sen69}
Shankar Sen.
\newblock On automorphisms of local fields.
\newblock {\em Ann. of Math. (2)}, 90:33--46, 1969.

\bibitem[Win04]{Win04}
Jean-Pierre Wintenberger.
\newblock Automorphismes des corps locaux de caract\'eristique {$p$}.
\newblock {\em J. Th\'eor. Nombres Bordeaux}, 16(2):429--456, 2004.

\bibitem[Yoc95]{Yoc95c}
Jean-Christophe Yoccoz.
\newblock Th\'eor\`eme de {S}iegel, nombres de {B}runo et polyn\^omes
  quadratiques.
\newblock {\em Ast\'erisque}, (231):3--88, 1995.
\newblock Petits diviseurs en dimension $1$.

\end{thebibliography}

\end{document}